\long\def\comment#1\endcomment{}
\theoremstyle{plain}
\newtheorem{theorem}{{\sc Theorem}}[section]
\newtheorem{lemma}[theorem]{\sc Lemma}
\newtheorem{prop}[theorem]{\sc Proposition}
\newtheorem{coroll}[theorem]{\sc Corollary}
\newtheorem{klemma}[theorem]{\sc Key-lemma}
\newcommand{\appsection}[1]{\let\oldthesection\thesection
  \renewcommand{\thesection}{\sc{Appendix \oldthesection}}
  \section{#1}\let\thesection\oldthesection}
\newcommand{\appsubsection}[1]{\let\oldthesubsection\thesubsection
  \renewcommand{\thesubsection}{\sc\oldthesubsection}
  \subsection{#1}\let\thesubsection\oldthesubsection}
\newcommand{\apptheorem}{
\let\oldthetheorem\thetheorem
\renewcommand{\thetheorem}{\sc \oldthetheorem}
\theorem\let\thetheorem\oldthetheorem}
\renewcommand{\thesection}{\sc \arabic {section}}
\renewcommand{\thetheorem}{\thesection.{\sc \arabic {theorem}}}
\theoremstyle{plain}
\newtheorem{defn}[theorem]{\sc Definition}
\theoremstyle{exercise}
\newtheorem{remark}[theorem]{\sc Remark}
\makeatletter \@addtoreset{equation}{section} \makeatother
\def\eqref#1{\thetag{\ref{#1}}}
\let\latexref=\ref
\def\ref#1{{\normalfont{\latexref{#1}}}}
\newcommand{\ldot}{{\:\raisebox{1pt}{\text{\circle*{1.5}}}}}
\newcommand{\udot}{{\:\raisebox{3pt}{\text{\circle*{1.5}}}}}
\def\dlim_#1{{\displaystyle\lim_{#1}}^\hdot}
\newcommand{\gr}{\mathrm{gr}}
\newcommand{\Ext}{\operatorname{Ext}}
\newcommand{\Ker}{\operatorname{{\rm Ker}}}
\newcommand{\Ob}{\mathrm{Ob}}
\newcommand{\Tetra}{\mathscr{T}etra}
\newcommand{\opp}{\mathrm{opp}}
\newcommand{\GS}{\mathrm{GS}}
\newcommand{\Hom}{\mathrm{Hom}}
\newcommand{\Bimod}{\mathscr{B}{imod}}
\newcommand{\RHom}{\mathrm{RHom}}
\newcommand{\Hoch}{\mathrm{Hoch}}
\newcommand{\op}{\mathrm{op}}
\newcommand{\Mon}{{\mathscr{M}}on}
\newcommand{\dg}{\mathrm{dg}}
\newcommand{\Ho}{\mathrm{Ho}}
\newcommand{\Cone}{\mathrm{Cone}}
\newcommand{\id}{\mathrm{id}}
\newcommand{\fint}{{f}}
\newcommand{\cchar}{\mathrm{char}\ }
\newcommand{\SL}{\mathrm{L}}
\newcommand{\Dr}{{\mathscr{D}r}}
\renewcommand{\k}{\Bbbk}
\newcommand{\Alg}{{\mathscr{A}lg}}
\newcommand{\Vect}{{\mathscr{V}}ect^\udot}
\newcommand{\Cat}{{\mathscr{C}at}}
\newcommand{\dprime}{{\prime\prime}}
\title{\sc{Differential graded categories and Deligne conjecture}}
\author{\sc{Boris Shoikhet}}
\date{}
\begin{document}\maketitle

{\footnotesize
\begin{center}{\parbox{4,5in}{{\sc Abstract.}
We prove a version of the Deligne conjecture for $n$-fold monoidal abelian categories $A$ over a field $\k$ of characteristic 0,
assuming some compatibility and non-degeneracy conditions for $A$.
The output of our construction is a weak Leinster $(n,1)$-algebra over $\k$, a relaxed version of the concept of Leinster $n$-algebra in $\Alg(\k)$. The difference between the Leinster original definition and our relaxed one is apparent when $n>1$, for $n=1$ both concepts coincide. 

We believe that there exists a functor from weak Leinster $(n,1)$-algebras over $\k$ to $C_\ldot(E_{n+1},\k)$-algebras, well-defined when $\k=\mathbb{Q}$, and preserving weak equivalences. For the case $n=1$ such a functor is constructed in [Sh4] by elementary simplicial methods, providing (together with this paper) a complete solution for 1-monoidal abelian categories.

Our approach to Deligne conjecture is divided into two parts. The first part, completed in the present paper, provides a construction of a weak Leinster $(n,1)$-algebra over $\k$, out of an $n$-fold monoidal $\k$-linear abelian category (provided the compatibility and non-degeneracy condition are fulfilled). 
The second part (still open for $n>1$) is a passage from weak Leinster $(n,1)$-algebras to $C_\ldot(E_{n+1},\k)$-algebras. 

As an application, we prove in Theorem \ref{3alg} that the Gerstenhaber-Schack complex of a Hopf algebra over a field $\k$ of characteristic 0 admits a structure of a weak Leinster (2,1)-algebra over $\k$ extending the Yoneda structure. It relies on our earlier construction [Sh1] of a 2-fold monoidal structure on the abelian category of tetramodules over a bialgebra.
}}
\end{center}
}

\section{\sc Introduction}
\subsection{\sc Deligne conjecture}
The statement called today ``the classical Deligne conjecture'' was suggested by Pierre Deligne in his 1993 letter to several mathematicians, and currently has several proofs of it, e.g. [MS], [T2], [KS].
It claims the following.
\begin{theorem}\label{thn1}
Let $A$ be an associative algebra (resp., a dg algebra, a dg category) over a field $\k$ of characteristic 0. Then the graded vector space $\RHom^\udot_{\Bimod(A)}(A,A)$ admits a structure of an algebra over the chain operad $C_\ldot(E_2,\k)$ such that the induced action of the homology operad $e_2$ on the Hochschild cohomology $\Ext^\udot_{\Bimod(A)}(A,A)$ is the Gerstenhaber's one [G]. The construction can be performed over $\mathbb{Z}$.
\end{theorem}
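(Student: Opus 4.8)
The plan is to produce the $G_\infty$-action not by a direct construction but by first isolating a rigid, strictly functorial structure at the cochain level and then transporting it along a quasi-isomorphism of operads. First I would fix a model for $\RHom^\udot_{\Bimod(A)}(A,A)$: for an associative algebra one takes the cohomological Hochschild complex $\Hoch^\udot(A)=\prod_{n\geq 0}\Hom_k(A^{\otimes n},A)$; for a dg algebra, or for a dg category $\C$, one replaces the diagonal bimodule by its bar resolution and takes the resulting normalised cochain complex $\prod\Hom_k\bigl(\C(x_1,x_0)\otimes\cdots\otimes\C(x_n,x_{n-1}),\C(x_n,x_0)\bigr)$, which carries exactly the same combinatorial operations as in the algebra case. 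On this complex one has the Gerstenhaber cup product $\cup$, associative on the nose, together with the brace operations $\Psi\{\Phi_1,\dots,\Phi_n\}$ obtained by summing over all order-preserving substitutions of $\Phi_1,\dots,\Phi_n$ into the slots of $\Psi$.

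The second step is to record that these operations assemble into an action of the brace operad --- equivalently, that $\bigl(\Hoch^\udot(A),\,d,\,\cup,\,\{-\}\{-,\dots,-\}\bigr)$ is a homotopy Gerstenhaber algebra in the sense of Gerstenhaber--Voronov; equivalently a $B_\infty$-algebra whose only nonzero structure maps are $m_{1,n}$ (the $n$-th brace) and $m_{2,0}$ (the cup product), all other $m_{p,q}$ vanishing; equivalently, the bar coalgebra $\BBar(\Hoch^\udot(A))$ is a dg bialgebra. The required identities --- the Gerstenhaber--Voronov relations among the braces, and the fact that each operation $\Psi\{-\}$ fails to be a derivation of $\cup$ only up to an explicit coboundary involving $\Psi\{-,-\}$ --- are verified by a direct, if tedious, bookkeeping of insertion positions and signs. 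One may equivalently phrase all this as an action of the Kontsevich--Soibelman minimal operad $M$, the dg operad freely generated by the cells of $E_2$, whose action on Hochschild cochains is built into the braces.

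The heart of the argument is the third step: the comparison of this operad with the chains on little discs. Here one invokes the theorem --- established in various models by Tamarkin [T1] and by [MS], [KS] --- that the brace operad, equivalently $M$, equivalently $C_\ldot(E_2,k)$, is quasi-isomorphic as a dg operad to the homotopy Gerstenhaber operad $G_\infty$, the cobar construction on the Koszul dual cooperad of $e_2$; by Fred Cohen's computation [C] the homology of all of these operads is $e_2$. Since $G_\infty$ is quasi-free, hence cofibrant, the weak equivalence (rigidified into an honest map using cofibrancy of $G_\infty$) $G_\infty\xrightarrow{\sim}(\text{brace operad})$, composed with the brace action, yields a genuine $G_\infty$-action on $\Hoch^\udot(A)$; in the dg-category case one also checks that the whole construction is sufficiently functorial to be independent, up to homotopy, of the chosen bar resolution, so that it descends to $\RHom^\udot_{\Bimod(A)}(A,A)$. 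Finally, passing to cohomology turns the $G_\infty$-action into an action of $e_2=H_\ldot(G_\infty)$, and one matches the two binary operations: the class of $\cup$ gives the graded-commutative product $\wedge$, and the class of the antisymmetrised brace $\Psi_1\{\Psi_2\}\pm\Psi_2\{\Psi_1\}$ gives the degree $-1$ Gerstenhaber bracket, so that the Poisson (Leibniz) compatibility on $\Ext^\udot_{\Bimod(A)}(A,A)$ is exactly Gerstenhaber's original lemma --- recovered here because its failure at the cochain level is trivialised by the explicit homotopy carried by the higher braces.

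I expect the third step to be the sole real obstacle. Tamarkin's route makes it the formality of the little discs operad, and thus ultimately rests on the existence of a Drinfeld associator; associators do exist over $\mathbb{Q}$, so no transcendental method is strictly needed, but exhibiting one is the only non-elementary ingredient in the proof. The alternative combinatorial models --- McClure--Smith's surjection operad or the Kontsevich--Soibelman minimal operad --- avoid associators at the price of a delicate explicit operadic homotopy equivalence; in every approach the cochain-level identities of the first two steps are routine, and the genuine content of the theorem is concentrated in this comparison of operads.
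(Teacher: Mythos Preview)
The paper does not prove Theorem~\ref{thn1}. It is stated in the Introduction as the {\it classical} Deligne conjecture, attributed to the existing literature [MS], [T2], [KS], and the very next subsection opens with the disclaimer: ``Now we warn the reader that, although we prove here a Deligne conjecture in much more general set-up, our results do not imply Theorem~\ref{thn1}.'' The paper's own contributions (Theorems~\ref{delignesimple} and~\ref{delignesimplen}) produce a Leinster $(n+1)$-algebra structure on $\RHom^\udot_{\mathscr{A}}(e,e)$ for an $n$-fold monoidal abelian category, and the passage from Leinster algebras to $G_\infty$-algebras is left as an open problem. So there is no proof in the paper to compare your proposal against.

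That said, your sketch is a faithful outline of the proofs in the cited references: the brace/$B_\infty$ structure on Hochschild cochains, followed by a quasi-isomorphism of operads between the brace operad (or the minimal operad $M$, or $C_\ldot(E_2,k)$) and $G_\infty$, with cofibrancy of $G_\infty$ used to rectify the zig-zag into a genuine map. Your identification of the third step as the only non-elementary ingredient, and of the r\^ole of associators in Tamarkin's approach versus the combinatorial cell models in [MS], [KS], is accurate. If anything, you could be more explicit that the McClure--Smith and Kontsevich--Soibelman routes give a direct weak equivalence to $C_\ldot(E_2,k)$ without invoking formality, so that the statement as phrased (action of $G_\infty$ rather than of $C_\ldot(E_2,k)$) still requires, in those approaches, the separate formality result --- which is precisely the point the paper makes when it distinguishes \eqref{id} from Theorem~\ref{thn1}.
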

Several remarks are in order. The Hochschild cohomology of an associative algebra $A$ is defined intrinsically as $\Ext^\udot_{\Bimod(A)}(A,A)$,
where $\Bimod(A)$ stands for the category of $A$-bimodules. Murray Gerstenhaber found [G] a cup-product $-\cup-$, and a Lie bracket $[-,-]$ of degree -1 (called the Gerstenhaber bracket) on the Hochschild complex of $A$ and proved that the operations $-\wedge-$ and $\{-,-\}$ on the cohomology $H=H(\Hoch^\udot(A))$ they descent to
fulfill the following identities:
\begin{equation}\label{intron1}
\begin{aligned}
\ &\text{(1)   $-\wedge-$ defines an associative commutative structure on $H$},\\
&\text{(2)   $\{-,-\}$ defines a graded Lie algebra structure on $H[1]$},\\
&\text{(3)   $\{a,b\wedge c\}=\{a,b\}\wedge c\pm b\wedge \{a,c\}$  (the Leibniz rule)}
\end{aligned}
\end{equation}
(for any homogeneous $a,b,c\in H$).

Such data is called a {\it Gerstenhaber algebra} over $\k$, or a 2-algebra. The operad of Gerstenhaber algebras is an operad in $\k$-vector spaces, denoted by $e_2$.

In 1976, Fred Cohen [C] proved that the operad $e_2$ is the homology operad of the little discs operad $E_2$, for the case of $\cchar \k=0$:
$e_2=H_\ldot(E_2,k)$.

The situation looked as follows: the cohomology operad of the little discs operad acted on the cohomology of the Hochschild complex .
It motivated Deligne to claim that the chain operad of little discs acts on the Hochschild complex, for any associative algebra $A$.

This claim was highly non-trivial, as the equation (3) of \eqref{intron1} fails on the level of Hochschild cochains:
\begin{equation}\label{intron2}
[\Psi_1,\Psi_2\cup\Psi_3]\ne [\Psi_1,\Psi_2]\cup\Psi_3\pm\Psi_2\cup[\Psi_1,\Psi_3]
\end{equation}
(for homogeneous $\Psi_1,\Psi_2,\Psi_3$).
(Though (2) holds on the Hochschild complex, and (1) holds after the symmetrization).

A proof of Theorem \ref{thn1} was suggested in the Getzler-Jones' 1994 preprint [GJ], but later a mistake in their argument was found.

A new interest to a proof of Deligne conjecture raised up after Tamarkin's 1998 proof [T1] of the Kontsevich formality theorem [Ko], based on operadic methods.
In Tamarkin's proof, the Deligne conjecture plays a central role.
Since that, many new proofs of the Deligne conjecture appeared, see e.g. [MS], [T2], [KS].

Moreover, it was proven that the chain operad of little discs $C_\ldot(E_2,\k)$ is quasi-isomorphic to the operad Koszul resolution $G_\infty$ of the (Koszul) operad $e_2$, and that both operads are quasi-isomorphic to its cohomology (are formal). However, the latter quasi-isomorphisms require transcendental methods. To perform them over $\mathbb{Q}$ one needs to choose a Drinfeld associator over $\mathbb{Q}$.

\subsection{}
In this paper, we prove a generalization of the Deligne conjecture for an $n$-fold monoidal [BFSV] abelian category, which is substantially greater generality (even when $n=1$) than the statement of Theorem \ref{thn1}. The ``output'' in our main Theorem \ref{thn2} is given by some algebraic structure called here {\it a weak Leinster $(n,1)$-algebra}. It is a relaxed version of Leinster monoids introduced in [Le]; more specifically, it is a relaxed version of a Leinster $n$-monoid in the category $\Alg(\k)$ of dg algebras over $\k$. 
We recall the definition of Leinster monoids in Section 2, and give the definition of a weak Leinster $(n,1)$-monoid (whis is seemingly new) in Section \ref{wl}.

Morally, Leinster monoids are closed cousins of weak Segal monoids [Se], for categories enriched over an arbitrary, non necessarily a cartesian-monoidal, symmetric monoidal category. Let $\mathscr{M}$ be a cartesian monoidal category; then Graeme Segal introduced a concept of a {\it weak monoid $M$ in $\mathscr{M}$}. Take the nerve of the monoid $M$, it is a simplicial set $X_\ldot$ with the additional property that the map
$$
X_n\to \underset{n \text{ factors}}{X_1\times X_1\times \dots\times X_1}
$$
is an isomorphism for any $n$, where the map is defined as a successive application of the extreme face maps. The idea was to weaken this property, postulating it to be a ``weak equivalence'', in an appropriate sense.

If we liked to give an analogous definition in the category of $\k$-vector spaces (of complexes of $\k$-vector spaces, of differential graded $\k$-algebras,...) we would immediately see that the above map is ill-defined. If we replaced the cartesian product in the nerve by our product $\otimes$, and set
$$
X_n=\underset{n\text{ factors}}{M\otimes\dots\otimes M }
$$
the corresponding $X_\ldot$ would fail to be a simplicial set (vector space,...). Namely, the extreme face maps are ill-defined.
For two vector spaces $V,W$, there no projections $V\otimes_k W\to V$, $V\otimes_k W\to W$.

The Leinster definition [Le] generalizes the Segal weak monoids for the case when the monoidal category we take the monoids in is not necessarily cartesian-monoidal.

We prove here a version of Deligne conjecture for arbitrary {\it monoidal abelian category}, with weak compatibility of the exact and monoidal structures, see Definition \ref{defn41}. Moreover, we prove it also for an abelian $n$-fold monoidal category, in sense of [BFSV].

The concept of a {\it weak Leinster $(n,1)$-algebra}, which we make use of to provide a non-linear structure on $\RHom^\udot_{\mathscr{A}}(e,e)$ in the statement below, is introduced in Section \ref{wl}.

Our main result is:
\begin{theorem}\label{thn2}
Let $\mathscr{A}$ be an essentially small $\k$-linear abelian $n$-fold monoidal category, where $\cchar \k=0$. Let $e$ be the unit object of $\mathscr{A}$. Assume the weak compatibility of the exact and the monoidal structures, as in Definition \ref{defn41n}. If $n>1$, assume as well that the $n$-fold monoidal structure is non-degenerate, in the sense of Definition \ref{deff}. Then $\RHom^\udot_{\mathscr{A}}(e,e)$ is a weak Leinster $(n,1)$-algebra, whose underlying Leinster 1-algebra product is the Yoneda product in $\RHom^\udot_{\mathscr{A}}(e,e)$.
\end{theorem}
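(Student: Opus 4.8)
The plan is to realize $\RHom^\udot_\mathscr{A}(e,e)$ as the value of an explicit, strictly functorial, purely combinatorial construction applied to the $n$-fold monoidal category $\mathscr{A}$, and then to check that this construction satisfies the axioms of a Leinster $(n+1)$-algebra recalled in Section 2. The extra ``$+1$'' direction, beyond the $n$ monoidal ones, will be the Yoneda/composition direction; this is the exact analogue of the classical case $\mathscr{A}=\Bimod(A)$, $n=1$, where the single monoidal product $\otimes_A$ together with composition of Hochschild cochains produces the $2$-algebra of Theorem \ref{thn1}.

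First I would replace the (a priori only homotopy-meaningful) object $\RHom^\udot_\mathscr{A}(e,e)$ by a strict model. Since $\mathscr{A}$ is essentially small abelian, I pass to a suitable category of chain complexes over $\mathscr{A}$ (or to $\mathrm{Ind}\,\mathscr{A}$) and fix, once and for all, a functorial ``bar-type'' acyclic resolution $P_\udot\onto e$ with good properties under the products; one then gets a dg algebra $E$ over $k$ computing $\RHom^\udot_\mathscr{A}(e,e)$, of the shape $\Hom^\udot(P_\udot,-)$, whose underlying dg-algebra (hence Leinster $1$-algebra) structure is the composition product --- this already matches the last clause of the statement. The weak compatibility of the exact and monoidal structures (Definition \ref{defn41n}) is used here to guarantee that each of the $n$ products $\otimes_i$ is exact enough on the chosen resolutions that $P_\udot^{\otimes_i p}$ is again a resolution of $e\cong e^{\otimes_i p}$, so that the canonical comparison maps between $E$ and $\Hom^\udot(P_\udot^{\otimes_i p},-)$, induced by the unit constraints $e\to e^{\otimes_i p}$, are quasi-isomorphisms.

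Next I would build the Leinster $(n+1)$-algebra structure itself. To a point of the combinatorial indexing category underlying the notion of Leinster $(n+1)$-algebra from Section 2 (an $(n+1)$-dimensional ``grid''/pasting datum), associate the complex $\Hom^\udot$ out of the corresponding grid object obtained by tensoring copies of $P_\udot$ together along the first $n$ coordinates according to the shape of the datum, and into $P_\udot$; the structure maps are assembled from (a) composition of morphisms, for the Yoneda coordinate, (b) functoriality of the $n$ products $\otimes_i$ on morphisms together with the concatenation constraints $e^{\otimes_i p}\otimes_i e^{\otimes_i q}\cong e^{\otimes_i(p+q)}$, for the monoidal coordinates, and (c) the interchange $2$-morphisms $\eta_{ij}$ of the $n$-fold monoidal structure, which supply the coherence cells relating distinct coordinates. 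The key point is that all of these data are honestly (not merely up to homotopy) compatible, so that one obtains a genuine functor on the indexing category: this is exactly why the output is a Leinster, rather than a homotopy, algebra, and why no transcendental input is needed.

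Finally I would verify that this functor satisfies the Segal/non-degeneracy conditions making it a Leinster $(n+1)$-algebra in the sense of Section 2, i.e.\ that all the structural comparison maps between a grid-degree complex and the appropriate combination of degree-one copies of $E$ are quasi-isomorphisms. This is where the non-degeneracy hypothesis (Definition \ref{deff}) enters decisively: it is precisely the condition ensuring that the maps induced by the unit constraints and by the interchangers, on the tensored-up resolutions of $e$, are quasi-isomorphisms --- equivalently, that the grid of copies of $e$ computes the correct thing in all directions simultaneously. I expect this last step to be the main obstacle: one must control the homology of arbitrarily large multi-tensor products of the resolution $P_\udot$ under all $n$ products at once, uniformly enough to conclude that every Segal map is a quasi-isomorphism, and this requires combining the exactness consequences of weak compatibility with the non-degeneracy input in a bookkeeping-heavy induction on the dimension $n$ and on the sizes of the grids. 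The identification of the underlying Leinster $1$-algebra with the Yoneda product then follows by restricting the whole structure to the composition coordinate, where by construction nothing but ordinary composition in $\Hom^\udot(P_\udot,P_\udot)$ occurs.
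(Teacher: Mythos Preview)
Your approach differs substantially from the paper's, and for $n\ge 2$ it has a genuine gap.

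The paper never chooses a resolution $P_\udot\to e$. It works entirely at the level of dg categories: one builds a colax-monoidal assignment $(\Delta_f^\opp)^{\times n}\to\mathscr{PC}at^\dg_-(k)$ sending $[m_1]\times\dots\times[m_n]$ to $(\mathscr{A}_0^\dg)^{\otimes(m_1\cdots m_n)}$ marked with the tuple of full subcategories ``acyclic in one slot'', then applies the generalized Drinfeld dg quotient $\mathscr{D}r$ (Sections~\ref{section330}--\ref{section33}) and finally $\mathscr{H}om=\Hom(*,*)$, with $*$ the image of $e\otimes\dots\otimes e$. The Segal condition (colax maps are weak equivalences) is not obtained by controlling tensor powers of a resolution; it is a structural property of $\mathscr{D}r$ proved in Proposition~\ref{gendr}. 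The identification of the degree-$[1]\times\dots\times[1]$ value with $\RHom^\udot_\mathscr{A}(e,e)$ uses Keller's description of the dg quotient in the pre-triangulated case (Proposition~\ref{keller}) together with Lemma~\ref{lemmaxxx}.

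The gap in your argument is the assertion that ``all of these data are honestly (not merely up to homotopy) compatible, so that one obtains a genuine functor on the indexing category''. For $n\ge 2$ this is false. The interchange morphisms $\eta_{ij}$ are non-identity maps, and consequently the assignment $[m_1]\times\dots\times[m_n]\mapsto(\text{grid of }\otimes_i\text{-products of }P_\udot)$ is only a \emph{colax} functor on $(\Delta_f^\opp)^{\times n}$: face maps in distinct coordinate directions commute only up to the $\eta_{ij}$, not strictly. This is exactly [BFSV, Theorem~2.1], and the paper faces the identical problem. But a Leinster $(n{+}1)$-algebra requires, by Definition~\ref{leinstern}, a \emph{genuine} functor $(\Delta_f^\opp)^{\times n}\to\mathscr{M}on(k)$ (with colax-monoidal structure), so the colax functor must be rectified. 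The paper does this via Street rectification (Propositions~\ref{strr}, \ref{strnew1}, \ref{strrnew}), and the non-degeneracy hypothesis of Definition~\ref{deff} is used \emph{at this step}: it says the $\eta_{ij}$ become isomorphisms in $H^0(\mathscr{A}_0^\dg)$, hence the colax-functor cells $\omega_{\alpha\beta}$ are homotopy equivalences, hence the rectified $\hat{F}$ has the same homotopy type as $F$ (Lemma~\ref{wqa}, Corollary~\ref{whowho}). In particular $\hat{F}([1]\times\dots\times[1])$ is still quasi-equivalent to $(\mathscr{A}_0^\dg;\mathscr{J}_0)$, which is what makes the Key-Lemma go through. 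You invoke non-degeneracy only for the Segal condition at the very end; but without first producing a genuine functor you have no Leinster object on which to check any Segal condition.
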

It is proven in Theorem \ref{delignesimple} for the case $n=1$, and in Theorem \ref{delignesimplen} for general $n$.

Note that the non-degeneracy condition of Definition \ref{deff} is apparent only for $n>1$.

The category of $A$-bimodules $\Bimod(A)$ is a $\k$-linear abelian monoidal category, with the monoidal product $M\otimes_A N$, $M,N\in\Bimod(A)$, whose two-sided unit is the tautological $A$-bimodule $A$. Thus the assumptions of Theorem \ref{thn1} is a particular case of those of Theorem \ref{thn2} for $n=1$.

The assumption that $\mathscr{A}$ is essentially small can be appropriately weaken. It can be replaced, for instance, by the assumption that $\mathscr{A}$ is a finitely-presentable Grothendieck category, or that the dg category $\mathscr{A}^\dg$ is generated by a set of perfect objects.
We impose here our assumptions on essential smallness basically to make the presentation more transparent and the main ideas more clear.

\subsection{}
One of the first impacts for the present paper was found in the Kock-To\"{e}n's paper [KT].

The authors prove in loc.cit., in a very conceptual way, a sort of Deligne conjecture for $n$-fold monoidal categories enriched {\it over simplicial sets}.
The authors work with weak Segal monoids, and do not treat the $\k$-linear case. We quote [KT, page 2]:
$${\small
\parbox{5,5in}{
...It is fair to point out that our viewpoint and proof do not seem to work for the original Deligne conjecture, since currently the theory of Segal categories does not work well in linear contexts (like chain complexes), but only in cartesian monoidal contexts. ... However, our original motivation was not to give an additional proof of Deligne's conjecture, but rather to try to understand it from a more conceptual point of view.
}}
$$
This point can be phrased out as follows. Let $M$ be a monoid in $\mathscr{V}ect(\k)$. It is natural to define its ``nerve'' using the tensor product $\otimes_k$ instead of the direct product in the set-enriched case. We define
\begin{equation}
X_n=X\otimes_k X\otimes_k\dots\otimes_k X\ \ \text{($n$ factors)}
\end{equation}
Then $X_\ldot$ is not a simplicial set. The two extreme face maps are ill-defined, as there are no projections $X^{\otimes n}\to X^{\otimes(n-1)}$ along the first (corresp., the last) $n-1$ factors. The origin of the trouble is that the symmetric monoidal category $\mathscr{V}ect(\k)$ is not cartesian-monoidal.

In this paper, we show how to adjust the strategy of [KT] to the $\k$-linear context by making use the Leinster monoids as substitutes of Segal monoids.

The arguments of [KT] rely on the Dwyer-Kan localization, in particular, on a hard result [DK, Corollary 4.7].
Working in the $\k$-linear context, with dg categories over $\k$ instead of simplicial categories, we replace the Dwyer-Kan localization by the Drinfeld construction of dg quotient.
In fact, we need the dg quotient to have some monoidal property, which both Drinfeld's and Keller's constructions of dg quotients fail to have.
As a solution, we construct in Section \ref{section330} a refinement of the Drinfeld dg quotient, having the same homotopy type and improved monoidal properties. Making used the Leinster monoids versus the Segal monoids in [KT], the suitable localization should have a manageable monoidal behavior 
not only on the level of the homotopy category of dg categories, but as well for the dg categories themselves. 
A remarkable feature of the Drinfeld dg quotient, comparably with the earlier Keller's construction of dg quotient, is that it is given by a dg category on the nose, not just an object of the homotopy category of dg categories. Our refinement of the Drinfeld dg quotient, as well as the monoidal property it was designed for, are also defined on the level of dg categories themselves.
We would say that the construction of the refined dg quotient in Section \ref{section330} is technically the main novelty introduced in the paper.

\subsection{\sc Interplay with the J.Lurie approach}
There is another version of higher Deligne conjecture proven in J.Lurie's {\it Higher Algebra} [L1, Ch.5.3], see also [F], [GTZ].

The authors in loc.cit. work with $\infty$-categories, and prove a version of Deligne conjecture for $E_n$-algebras in $\infty$-categories.
To the best of our knowledge, the authors in loc.cit. do not discuss the question of assigning an $E_n$-algebra in $\infty$-categories to a ``classical'' object, namely, to an abelian $n$-fold monoidal category in the sense of [BFSV]. Thus, our main result in Theorem \ref{thn2} seemingly does not follow, at least for $n\ge 2$, from Lurie's approach.

In the same time, we believe that the abelian $n$-fold monoidal categories in the sense of [BFSV] ``appear naturally'' in deformation theory. Our earlier paper [Sh1] shows how to assign a 2-fold monoidal $\k$-linear abelian category with a deformation theory of a bialgebra $B$ over $\k$; however the non-degeneracy (Definition \ref{deff}), which is essential assumption in Theorem \ref{thn2}, does not hold in general. It holds e.g. when $B$ is a Hopf algebra over $\k$, as we show here in Section \ref{stetr}. Any construction of $E_2$-algebras in $\infty$-categories from 2-fold monoidal abelian categories should be sensible to such phenomena.

On the other hand, Theorem \ref{thn2} does not provide a final solution to higher Deligne conjecture; it should be supplemented with a construction of a passage from the weak Leinster $(n,1)$-algebras over $\k$ to $C_\ldot(E_{n+1},\k)$-algebras, which is also a highly non-trivial question. It may use the theory of $\infty$-categories, in particular, on the theory of $\infty$-operads [L1, Ch.2]. The only "elementary" solution we know covers the case $n=1$, see [Sh4]. 

\subsection{\sc Organization of the paper}
In Section 2 we discuss the Leinster monoids in a symmetric monoidal category, following Leinster [Le]. They substitute the Segal weak monoids [Se] for the case when $\mathscr{M}$ is not necessary cartesian-enriched category. In particular, this concept is well-defined in the linear context. Nothing here is new.

In Section \ref{wl} we introduce weak Leinster $(n,1)$-monoids. We arrived to this definition by considering how a based poly-monoidal oplax-functor to $\Cat^\dg(\k)$ is specialized to a functor to $\Alg(\k)$, by taking the $\Hom$s from the based object to itself. This link will become more clear in Section \ref{allthat}, which the reader is advised to read in parallel with Section \ref{wl}.

Section 4 starts with an overview of the Keller and the Drinfeld constructions of dg quotient, as well as of the universal property of a dg quotient. In Sections \ref{section330} and \ref{section33} we introduce a refined ``monoidal'' version of the Drinfeld dg quotient. This construction is the main technical novelty introduced in the paper.

We prove the Deligne conjecture in the form of Theorem \ref{thn2} for $n=1$ in Section 5, and in Section 7 for general $n$. The main results are Theorem \ref{delignesimple} and Theorem \ref{delignesimplen}, for the case $n=1$ and for general $n$, correspondingly.

In Section \ref{allthat} we collect the definitions on monoidal (op)lax-functors, which are necessary for the proof of Deligne conjecture for $n>1$. We also discuss how an $n$-fold monoidal $\k$-linear category gives rise to a strict poly-monoidal (op)lax-functor, see Theorem \ref{monquasi}.

Section 8 contains an application of $n=2$ case of Theorem \ref{thn2} to deformation theory of associative bialgebras. It is a further development of our construction of a 2-fold monoidal structure on the category of tetramodules over a bialgebra $B$, established in [Sh1]. We prove in Theorem \ref{3alg} that, when the bialgebra $B$ is a Hopf algebra, the non-degeneracy condition of Definition \ref{deff} holds for the 2-fold monoidal category of $B$-tetramodules. Then Theorem \ref{thn2} implies that the Gerstenhaber-Schack complex of $B$ (a.k.a. its deformation complex) is a Leinster (2,1)-algebra.

\subsubsection*{\sc Acknowledgements}
The author is grateful to Michael Batanin, Sasha Beilinson, Clemens Berger,
Volodya Hinich, Dima Kaledin, Bernhard Keller, Tom Leinster, Wendy Lowen, Ieke Moerdijk, Stefan Schwede, and to Vadik Vologodsky, for many useful and inspiring discussions. Volodya Hinich found a wrong argument in a proof in Section 6 of the first version of the paper, which is corrected here. 

The current version (2020) fixes an inaccuracy in definition of the category $\mathscr{PC}at^\dg_-(\k)$ in Section 4, which was communicated to the author by Timothy Logvinenko. Here we suggest  an upgrade of the former version of the definition via the Grothendieck construction, which fixes the problem and is conceptually nicer. 
I am thankful to Timothy for his communication and for subsequent correspondence. 

The work was started in 2011-2012, during the author's research stay at the Max-Planck-Institut f\"{u}r Mathematik, Bonn.
I am grateful to the MPIM for the opportunity to stay there, which made possible to interact with many mathematicians visited the MPIM, as well as for the excellent working conditions.

The work was completed in 2015 at the University of Antwerp, where the author's work
was partially supported by the Flemish Science Foundation (FWO) Research grant {\it Krediet aan Navorser} 19/6525.

\vspace{5pt}

{\it Notations:} Throughout the paper, $\k$ denotes a field of characteristic 0. 

\vspace{2pt}

\section{\sc Segal monoids and Leinster monoids}\label{sectionlm}
Here we recall the definitions of a {\it Leinster monoid} and of a {\it Leinster $n$-monoid} in a symmetric monoidal category.

Recall that {\it a Segal monoid} in a symmetric monoidal category $\mathscr{M}$ is a simplicial object $A\colon \Delta^\opp\to\mathscr{M}$ such that the natural maps
$$
\varphi_n\colon A_n\to A_1\times A_1\times\dots\times A_1
$$
defined by the successive application of extreme face maps, are {\it weak equivalences} (in an appropriate sense), where $\times$ denotes the monoidal product in $\mathscr{M}$.

If $M$ is a honest (strict) monoid in $M$, it can be considered as a Segal monoid, with $A_n=M^{\times n}$ (the nerve construction).

This definition makes sense only when $\mathscr{M}$ is a cartesian-monoidal category, that is, when the moinal product is the cartesian product. (For example, the monoidal category $\mathscr{V}ect(\k)$ of vector spaces over a field $\k$, with $\otimes_k$ as the monoidal product, is {\it not} cartesian-monoidal. Indeed, the cartesian product of two vector spaces $V,W$ is their direct sum $V\oplus W$, not the tensor product $V\otimes_k W$).

The matter is that, when $\mathscr{M}$ is not cartesian-monoidal, and $M$ is a honest monoid in $\mathscr{M}$, the nerve $N(M)$ is not, in general, a simplicial set. To make it clear, let us recall the simplicial structure on the nerve of a monoid in the category of sets (or in any other cartesian monoidal category). 

The $n$-simplices of $N(M)$ is the set $A_n=M^{\times n}$. One should define the face maps
$$
F_0,\dots,F_n\colon A_n\to A_{n-1}
$$
and the degeneracy maps 
$$
D_0,\dots, D_n\colon A_n\to A_{n+1}
$$
All but the two extreme face maps are defined as the product of two neighbor factors (the face maps $F_1,\dots,F_{n-1}\colon A_n\to A_{n-1}$ are like that; the face map $F_i$ is defined by the product in $M$ of the $i$-th and $(i+1)$-th factors). 

The {\it two extreme face maps} $F_0,F_n\colon M^{\times n}\to M^{\times (n-1)}$ are defined {\it as the projections} along the rightmost (corresp., the leftmost) factor.

The degeneracy maps are defined as the insertion of the monoidal unit of $M$ to the corresponding place, as a new factor.

Consider as an example the face maps $A_2\to A_1$.
We have $A_1=M$, $A_2=M\times M$. There are three face maps $A_2\to A_1$, corresponding to three possible semi-monotonous maps $[0,1]\to [0,1,2]$ in $\Delta$. The corresponding maps $A_2\to A_1$ are
\begin{equation}
F_0(a\times b)=a,\ \ \ F_1(a\times b)=a*b,\ \ \ F_2(a\times b)=b
\end{equation}
where $*$ is the monoidal product in $M$.

Without the assumption that the monoidal category $\mathscr{M}$ is cartesian-monoidal, only the map $F_1$ among the three maps $F_0,F_1,F_2$ makes sense, as {\it the projections are ill-defined}.

As a conclusion, the nerve $N(M)$ is not a simplicial set, for the case when $\mathscr{M}$ is not cartesian-monoidal. Consequently, the Segal definition of a weak monoid is not an adequate one for this case, as honest monoids fail to be weak ones.

Tom Leinster suggested [Le] the following modification of the Segal definition, which fixes the problem. Let us recall Leinster's definition.

Denote by $\Delta_\fint$ (the category of finite intervals) the subcategory of the simplicial category $\Delta$, having the same objects $[0],[1],[2],\dots$ as $\Delta$, and whose morphisms are the morphisms of $\Delta$ $f\colon [m]\to [n]$ preserving the end-points: $f(0)=0$ and $f(m)=n$. Morally, it is the sub-category in $\Delta$, generated by all degeneracy maps, and by all face maps except the extreme ones.

The category $\Delta_\fint$ is {\it monoidal}(unlike the category $\Delta$ itself). The product is defined on objects as $$[m_1]\otimes [m_2]=[m_1+m_2]$$ where $[m]=\{0<1<\dots<m\}$ is, as usual, a totally ordered set with $m+1$ elements. That is, in the monoidal product we take the quotient-set, by identifying the maximal element of $[m_1]$ with the minimal element of $[m_2]$. Due to the imposed condition on morphisms that they preserve the end-points, the identification extends to gluing the morphisms, making $\Delta_\fint$ a symmetric monoidal category.

\begin{defn}\label{lold}{\rm
\begin{itemize}
\item[(i)]
Let $\mathscr{M}$ be a symmetric monoidal category with a class $T$ of morphisms containing all isomorphisms in $\mathscr{M}$ and closed under the composition. Such a pair $(\mathscr{M}, T)$ is called {\it a monoidal category with weak equivalences} and the morphisms in $T$ are called {\it weak equivalences}.
\item[(ii)]
A {\it Leinster monoid} in a category $\mathscr{M}$ with weak equivalences is a colax-monoidal functor $A\colon\Delta_\fint^\opp\to \mathscr{M}$ whose colax maps $$\beta_{m,n}\colon A_{m+n}\to A_m\otimes A_n$$
and
$$
\alpha\colon A([0])\to \underline{e}
$$
are  weak equivalences
(here $e$ is the unit in $\mathscr{M}$, and $\underline{e}$ is a monoid in $\mathscr{M}$ with single object $e$, and with morphisms $\Hom(e,e)$),
\item[(iii)]
A {\it Leinster pre-monoid} in $\mathscr{M}$ is the same data but dropping the condition that the colax-maps are weak equivalences.
\end{itemize}
}
\end{defn}

\begin{defn}\label{leinster1}{\rm
Let $A$ be a honest monoid in a monoidal category $\mathscr{M}$. The corresponding Leinster monoid $A^\SL\colon\Delta_\fint^\opp\to\mathscr{M}$ is defined as
\begin{equation}\label{slhonest}
A_n^\SL=A\otimes A\otimes\dots \otimes A \text{\ \ \ ($n$ factors) for $n\ge 1$},\ \ A_0=e
\end{equation}
where $e$ is the unit object in $\mathscr{M}$.

The boundary maps $\delta_i\colon A_n^\SL\to A_{n-1}^\SL$
are defined as the product of the $i$-th and $(i+1)$-th factors. The degeneracy maps $\epsilon_i\colon A_n^\SL\to A_{n+1}^\SL$ are defined as the insertion of the unit $e$ to the $i$-th position. (The ill-defined extreme face maps do not belong to $\Delta_\fint$).

This functor has natural colax-monoidal structure, with
$$
\beta_{mn}\colon A_{m+n}^\SL\to A_m^\SL\otimes A_n^\SL
$$
and
$$
\alpha\colon A_0^\SL\to e
$$
to be the identity maps.
}
\end{defn}
One immediately sees
\begin{lemma}\label{n}
Let $\mathscr{M}$ be a monoidal category with weak equivalences, and denote by $\Mon^{\SL}(\mathscr{M})$ the category of Leinster monoids in $\mathscr{M}$. Endow $\Mon^\SL(\mathscr{M})$ with the monoidal structure defined component-wise, by the monoidal structure in $\mathscr{M}$.
Then $\Mon^\SL(\mathscr{M})$ becomes a monoidal category with weak equivalences, whose weak equivalences are the component-wise weak equivalences.
\end{lemma}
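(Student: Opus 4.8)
The plan is essentially bookkeeping: I would spell out the pointwise construction and isolate the two places where the symmetry of $\mathscr{M}$ enters. Recall that an object of $\mathscr{SL}(\mathscr{M})$ is a colax-monoidal functor $A\colon\Delta_\fint^\opp\to\mathscr{M}$ with structure maps $\beta^A_{m,n}\colon A_{m+n}\to A_m\otimes A_n$ and $\alpha^A\colon A([0])\to\underline e$ lying in the class $T$ of weak equivalences, and that a morphism $f\colon A\to B$ is a monoidal natural transformation, i.e.\ a natural transformation whose components $f_n$ commute with the $\beta$'s and the $\alpha$'s. Given $A,B\in\mathscr{SL}(\mathscr{M})$ I would set $(A\otimes B)_n\defeq A_n\otimes B_n$ and, on morphisms, $(f\otimes g)_n\defeq f_n\otimes g_n$; these are well defined by functoriality of $A$, of $B$, and of $\otimes$ in $\mathscr{M}$.

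Next I would produce the colax-monoidal structure on the pointwise product. The structure map $\beta^{A\otimes B}_{m,n}\colon A_{m+n}\otimes B_{m+n}\to(A_m\otimes B_m)\otimes(A_n\otimes B_n)$ is defined as the composite of $\beta^A_{m,n}\otimes\beta^B_{m,n}\colon A_{m+n}\otimes B_{m+n}\to(A_m\otimes A_n)\otimes(B_m\otimes B_n)$ with the canonical isomorphism $(A_m\otimes A_n)\otimes(B_m\otimes B_n)\eqto(A_m\otimes B_m)\otimes(A_n\otimes B_n)$ assembled from the associativity and symmetry constraints of $\mathscr{M}$, and $\alpha^{A\otimes B}$ is defined analogously from $\alpha^A\otimes\alpha^B$ and the unit isomorphism $\underline e\otimes\underline e\eqto\underline e$. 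Checking the colax coherence axioms for $A\otimes B$, and checking that $f\otimes g$ is again a monoidal natural transformation when $f,g$ are, is a diagram chase which, by naturality together with Mac Lane's coherence theorem for symmetric monoidal categories, reduces to the coherence already known for $A$, $B$ (resp.\ for $f$, $g$). The monoidal unit of $\mathscr{SL}(\mathscr{M})$ is the constant functor $\underline e=e^\SL$ --- all components equal to $e$, all structure maps the unit isomorphisms of $\mathscr{M}$ --- and the associativity and unit constraints of $\mathscr{SL}(\mathscr{M})$ are taken componentwise from $\mathscr{M}$; that these are monoidal natural transformations is, again, Mac Lane coherence.

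Then I would treat the weak equivalences. First one checks that $A\otimes B$ is genuinely a Segal-Leinster monoid, not merely a pre-monoid: $\beta^{A\otimes B}_{m,n}$ is the composite of $\beta^A_{m,n}\otimes\beta^B_{m,n}$, which lies in $T$ because $T$ is closed under monoidal products of morphisms, with a coherence isomorphism, which lies in $T$ because $T$ contains every isomorphism; composing the two keeps the result in $T$ (a class of weak equivalences being, by the usual convention, closed at least under composition with isomorphisms), and likewise $\alpha^{A\otimes B}\in T$. Hence $\otimes$ restricts to $\mathscr{SL}(\mathscr{M})$. Finally I declare a morphism $f\colon A\to B$ of $\mathscr{SL}(\mathscr{M})$ to be a weak equivalence precisely when each $f_n$ lies in $T$. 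This class contains every isomorphism of $\mathscr{SL}(\mathscr{M})$, since such an isomorphism is exactly a natural transformation all of whose components are isomorphisms; and it is closed under the pointwise monoidal product, since $(f\otimes g)_n=f_n\otimes g_n\in T$ whenever $f_n,g_n\in T$, by the defining axiom for $\mathscr{M}$. This makes $\mathscr{SL}(\mathscr{M})$ a monoidal category with weak equivalences, as claimed.

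There is no genuine obstacle here --- which is precisely why the text can say ``one immediately sees''. If one insists on naming the one mildly delicate spot, it is that the colax structure on the pointwise tensor product is manufactured out of the symmetry of $\mathscr{M}$, so one must be sure both that the reassociating isomorphisms do not disturb the colax coherence of $A\otimes B$ (handled by Mac Lane coherence) and that composing a morphism of $T$ with such an isomorphism remains in $T$ (handled by the standard closure convention for weak equivalences). Everything else is routine naturality bookkeeping.
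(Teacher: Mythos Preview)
Your proposal is correct and is precisely the routine verification the paper is gesturing at: the paper gives no proof whatsoever beyond the phrase ``one immediately sees'' followed by a \qed, so your componentwise bookkeeping \emph{is} the intended argument, just spelled out in full. The one mildly delicate point you flag --- that the paper's Definition of ``monoidal category with weak equivalences'' does not literally require $T$ to be closed under composition (or even under composition with isomorphisms), which is needed for $\beta^{A\otimes B}_{m,n}\in T$ --- is a genuine omission in the paper's axiomatics rather than a flaw in your reasoning, and your appeal to the standard convention is the right way to handle it.
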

\qed

Consider $\mathscr{M}=\mathscr{V}ect^\udot(\k)$, the category of complexes of $\k$-vector spaces, $\k$ is a field. The monoidal product of $V$ and $W$ in $\mathscr{V}ect^\udot(\k)$ is given by their tensor product $V\otimes_\k W$.
\begin{defn}\label{leinster}{\rm
A {\it Leinster 1-algebra} $A^\SL$ over field $\k$ is defined as a Leinster monoid $A^\SL:\Delta_\fint^\opp\to\mathscr{V}ect^\udot(\k)$ in the category $\mathscr{M}=\mathscr{V}ect^\udot(\k)$.
}
\end{defn}

Based on Lemma \ref{n}, we define iteratively the category $\Alg^L_n(\k)$ of Leinster $n$-algebras over $\k$:
\begin{defn}\label{leinstern}{\rm
A Leinster $n$-algebra $A$ is a $\k$-linear Leinster monoid (=Leinster algebra) in the monoidal category $\Alg^L_{n-1}(\k)$ of Leinster $(n-1)$-algebras. It is explicitly given as a functor
$$
(\Delta^\opp_\fint)^{\times n}\to\mathscr{V}ect^\udot(\k),\ \ \ [i_1]\times[i_2]\times\dots\times [i_n]\mapsto A_{i_1i_2\dots i_n}
$$
colax-monoidal by each its argument,
with
$$
A_{11\dots 1}=A
$$
(By a Leinster 0-algebra we understand an element of the category $\Vect(\k)$ itself).
}
\end{defn}

\comment
\subsection{\sc The bar-complex of a Leinster 1-algebra}
Let $F\colon \Delta_f^\opp\to\mathscr{V}ect^\udot(\k)$ be a Leinster 1-algebra. Here we introduce a {\it bar-complex $\mathrm{Bar}^\udot(F)$ of $F$}, which is a genuine dg coalgebra, with some additional property. We do not use this construction in the paper, however, we consider it as a first element of the (future) bridge between Leinter $n$-algebras and homotopy $n$-algebras.
In the case when $X$ is a honest dg algebra over $\k$, and $F([n])=X^{\otimes n}$ is the Segal-Leinster monoid constructed out of $X$, our bar-complex is just the classical (reduced) bar-complex of $X$.

Here is the construction.

We set $Bar^k(F)=F([n])/k=X_n/k$,
where
$k\subset X_n$ is the image of $X_0=k$ obtained by consecutive degeneracy maps. In the case when $X_n=X^{\otimes n}$ for a dg algebra $X$, this subspace $k\subset X^{\otimes n}$ is the 1-dimensional subspace generated by $1\otimes\dots\otimes 1$ ($n$ factors).

We use the notation
\begin{equation}
X_n^+=X_n/k
\end{equation}

The differential is
\begin{equation}
d_n\colon X_n^+\to X_{n-1}^+\ \ d_n=\sum_{i=1}^{n-1}\delta_i
\end{equation}
where $\delta_i$ is the $i$-th face map. Here these maps are $\delta_1,\dots,\delta_{n-1}$, they belong to $\Delta_f$, but the two extreme ones, $\delta_0$ and $\delta_n$, do not. Our observation is that in the classical bar-complex of an associative algebra, the differential is written using only the face maps from
$\Delta_f^\opp$.

At the next step, we define a coassociative coproduct on $\mathrm{Bar}^\udot(F)$.
It is given by $\omega\colon \mathrm{Bar}^\udot(F)\to\mathrm{Bar}^\udot(F)^{\otimes 2}$, where
$\omega|_{X_n}$ is the obtained from the map
\begin{equation}\label{nonreduced}
\omega_n^\prime\colon X_n\xrightarrow{\oplus\beta_{ij}}\bigoplus_{i+j=n}X_i\otimes X_j
\end{equation}
given by the direct sum of the colax maps $\beta_{ij}$.
The coassociativity of $\omega$ follows from the colax-equations for $\beta_{ij}$s, and the compatibility of $\omega$ with the differential follows from the fact the $\beta_{ij}$ are colax maps for the {\it functor} $F$, and, therefore, are compatible with all faces and degeneracies maps.
We need the compatibility with the face maps for the compatibility with the differential, and we need the compatibility with the degeneracy maps to prove that $\omega_n^\prime$ descents from \eqref{nonreduced} to
\begin{equation}\label{reduced}
\omega_n\colon X_n^+\xrightarrow{\oplus\beta_{ij}}\bigoplus_{i+j=n}X_i^+\otimes X_j^+
\end{equation}

So far, we did not use the property that in a Leinster 1-algebra the colax-map $\beta_{ij}$ are quasi-equivalences of complexes. This property is used to establish Proposition \ref{privet} below, distinguishing  the dg coalgebras $\mathrm{Bar}^\udot(F)$ for Leinster monoids $F$, among all dg coalgebras. We need some preparation to formulate it.

Recall the Quillen adjunction
\begin{equation}\label{adjunction}
L\colon\ \mathscr{C}oalg^\udot_+(\k)\rightleftarrows \mathscr{A}lg_+^\udot(\k)\ \colon R
\end{equation}

Here $\mathscr{A}lg^\udot(\k)$ is the closed model category of $\mathbb{Z}$-graded dg algebras over $\k$, and $\mathscr{C}oalg^\udot_+(\k)$ is the closed model category of non-counital pro-nilpotent dg coalgebras over $\k$. A coalgebra $C$ is called {\it pro-nilpotent} if the ascending sequence of its subspaces
$$
C_n=\Ker\{\Delta^n \colon C\to C^{\otimes (n+1)}\}
$$
forms an exhaustive ascending filtration of $C$. The corresponding closed model structures in such generality were constructed by Hinich [Hi1,2].

In adjunction \eqref{adjunction}, the functor $L$ applied to a coalgebra $C$ is the bar-complex of $C$, and the functor $R$ applied to an algebra $A$
is the {\it reduced} bar-complex of $A$.

\begin{prop}\label{privet}
For any Leinster monoid $F$, the bar-complex $\mathrm{Bar}^\udot(F)$ is a dg coalgebra, enjoying the following additional property: the adjunction map
$\mathrm{Bar}^\udot(F)\to R\circ L (\mathrm{Bar}^\udot(F))$ is quasi-isomorphism of dg coalgebras.
\end{prop}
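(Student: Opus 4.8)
The plan is to recognise $\mathrm{Bar}^\udot(F)$, up to quasi-isomorphism of dg coalgebras, as the bar construction of an $A_\infty$-algebra carried by $A := X_1$, and then to deduce the claim from the formalism of the bar--cobar adjunction $L\dashv R$. As a preliminary one records that $\mathrm{Bar}^\udot(F)$ indeed lies in $\mathscr{C}oalg^\udot_+(k)$: coassociativity of $\omega$ and its compatibility with the differential $d=d_{\mathrm{int}}+\sum_i\delta_i$ were explained above, and pro-nilpotency is immediate because $X_0^+=k/k=0$ forces each $\omega_n$ to take values in $\bigoplus_{i+j=n,\ i,j\ge 1}X_i^+\otimes X_j^+$, so that the ascending filtration by $\bigoplus_{m\le n}X_m^+$ is exhaustive and conilpotent. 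Hence $RL(\mathrm{Bar}^\udot(F))$ is defined and the map in the statement is the unit $\mathrm{Bar}^\udot(F)\to RL(\mathrm{Bar}^\udot(F))$ of the adjunction. For an arbitrary pro-nilpotent $C$ this unit is merely a weak equivalence in Hinich's model structure on $\mathscr{C}oalg^\udot_+(k)$; the content of the proposition is that for $C=\mathrm{Bar}^\udot(F)$ it is an honest quasi-isomorphism of the underlying complexes, and this is where the Leinster (not merely pre-Leinster) hypothesis on $F$ is needed in an essential way.

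The key step is homotopy transfer. Since $F$ is a Leinster monoid, the iterated colax maps $\beta^{(n)}\colon X_n\to X_1^{\otimes n}$ are quasi-isomorphisms of complexes. Choosing over $k$ quasi-inverses $\gamma^{(n)}$ together with contracting homotopies and transporting the face operations $\delta_i\colon X_n\to X_{n-1}$ through this data, the homotopy transfer theorem endows $A=X_1$ with an $A_\infty$-algebra structure $m_\bullet$ (with $m_2$ homotopic to $\delta_1\circ\gamma^{(2)}$) and upgrades $\beta^{(\bullet)},\gamma^{(\bullet)}$ to mutually inverse $A_\infty$-quasi-isomorphisms between $(A,m_\bullet)$ and the ``Leinster object'' underlying $F$. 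Passing to bar constructions and using the degeneracies to cut down to the reduced complexes $X_n^+$, one obtains a quasi-isomorphism of pro-nilpotent dg coalgebras $\Phi\colon\mathrm{Bar}^\udot(F)\to B(A)$, where $B(A)$ is the cofree conilpotent dg coalgebra on $sA^+$ with codifferential assembled from the $m_k$; thus $\mathrm{Bar}^\udot(F)$ is identified, up to quasi-isomorphism, with the bar construction of the $A_\infty$-algebra $(A,m_\bullet)$.

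It then suffices to prove the property for $B(A)$, which follows from the formalism: the dg algebra $L\,B(A)$ comes with a natural $A_\infty$-quasi-isomorphism $L\,B(A)\to A$, and the bar construction carries $A_\infty$-quasi-isomorphisms to quasi-isomorphisms of dg coalgebras (by a filtration argument along tensor length, convergent thanks to conilpotency), so the induced map $\varepsilon\colon RL\,B(A)=B\bigl(L\,B(A)\bigr)\to B(A)$ is a quasi-isomorphism; the triangle identity of the bar--cobar adjunction, in its $A_\infty$-enhanced form, gives $\varepsilon\circ\eta_{B(A)}=\id_{B(A)}$, whence the unit $\eta_{B(A)}$ is a quasi-isomorphism. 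Finally, by naturality of $\eta$ one has a commuting square relating $\eta_{\mathrm{Bar}^\udot(F)}$ to $\eta_{B(A)}$ along $\Phi$ and $RL(\Phi)$; since $L$ takes quasi-isomorphisms between (automatically cofibrant) pro-nilpotent dg coalgebras to quasi-isomorphisms of dg algebras and $R$ then takes these to quasi-isomorphisms, $RL(\Phi)$ is a quasi-isomorphism, and the square forces $\eta_{\mathrm{Bar}^\udot(F)}$ to be one as well.

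I expect the main obstacle to be the homotopy-transfer step, together with the analytic bookkeeping around it: one must carry out the transfer coherently for the entire colax functor, not just for a single product, match the transferred bar codifferential on $B(A)$ with the operator $d_{\mathrm{int}}+\sum_i\delta_i$ through $\beta^{(\bullet)},\gamma^{(\bullet)}$ after the degeneracy reduction (keeping track of signs), and make sure the relevant spectral sequences --- for $\Phi$ itself and for the compatibility of the bar functor with quasi-isomorphisms --- actually converge in the possibly unbounded $\mathbb{Z}$-graded setting. These are precisely the points where the mere conilpotence of $\mathrm{Bar}^\udot(F)$ does not suffice and where the Leinster condition --- which guarantees that $\Omega\mathrm{Bar}^\udot(F)$ has the homotopy type of a genuine dg-algebra model of $X_1$ rather than of something larger --- does the real work. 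All of this is elementary homological algebra over $k$ and involves no transcendental input, consistently with the spirit of the paper.
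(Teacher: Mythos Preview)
The paper contains no proof of this proposition: the entire subsection in which it sits, including the statement itself, is wrapped in a \verb|\comment ... \endcomment| block and hence is excised from the compiled paper. There is therefore nothing to compare your attempt against.

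Evaluating your argument on its own terms, the overall strategy (homotopy-transfer an $A_\infty$-structure to $X_1$ along the colax quasi-isomorphisms $\beta^{(n)}$, identify $\mathrm{Bar}^\udot(F)$ with the resulting $A_\infty$-bar construction, and then invoke bar--cobar) is the natural one and should go through. There is, however, one genuine slip in the last paragraph: the assertion that ``$L$ takes quasi-isomorphisms between (automatically cofibrant) pro-nilpotent dg coalgebras to quasi-isomorphisms of dg algebras'' is false in general --- this is precisely why Hinich's weak equivalences on $\mathscr{C}oalg^\udot_+(k)$ are \emph{not} the quasi-isomorphisms. The cobar functor $L=\Omega$ does not preserve arbitrary quasi-isomorphisms of conilpotent coalgebras. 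In your situation the fix is easy: your map $\Phi$ is induced degreewise by the $\beta^{(n)}$ and hence is a \emph{filtered} quasi-isomorphism with respect to the coradical (tensor-length) filtration; for such maps a standard spectral-sequence argument shows $L(\Phi)$ is a quasi-isomorphism. Alternatively, argue directly that $\Phi$ arises from an $A_\infty$-quasi-isomorphism and that cobar applied to bar of an $A_\infty$-quasi-isomorphism is a quasi-isomorphism. Either way, you should replace the false general claim with the specific filtered statement.
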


\endcomment

\section{\sc Weak Leinster $(n,1)$-monoids}\label{wl}
Here we introduce some relaxed version of the special Leinster $n+1$-monoids wich are Leinster $n$-monoids in strict 1-monoids. To simplify the notations, we give the definition only for the case of monoids in $\Vect(\k)$.

For $n=1$, the concept of a weak Leinster $(1,1)$-monoid agrees with the concept of usual 1-Leinster monoids in $\Alg(\k)$, see Definition \ref{lold}. However, it is essentially more relaxed for $n>1$.
We had arrived to it in our study of the Deligne conjecture for $n$-fold monoidal abelian categories, for $n>1$.
The reader will see that the way we make the Definition \ref{leinstern} relaxed comes from the categorical concept of a strict poly-monoidal oplax-functor, see Definitions \ref{quasi}, \ref{quasibis}, \ref{bifmon} below.

Note that in all our examples the elements $g(\underline{m})$ (see (4)) are {\it equal} to the monoid units.

\begin{defn}\label{leinsterrelaxed}
{\rm
A {\it weak Leinster $(n,1)$-monoid} in $\Vect(\k)$ is data which assigns:
\begin{itemize}
\item[(1)] to each object $[m_1]\times\dots \times [m_n]$ of the category $(\Delta_f^\opp)^{\times n}$ a strict algebra $X_{m_1,\dots,m_n}\in\Vect(\k)$ (that is, $X_{m_1,\dots,m_n}$ is a dg algebra),
\item[(2)] to each morphism $\alpha\colon [m_1]\times\dots \times [m_n]\to [\ell_1]\times\dots\times [\ell_n]$ a strict morphism of monoids (that is, a map of dg algebras)
$$
\alpha_*\colon X_{m_1,\dots,m_n}\to X_{\ell_1,\dots,\ell_n}
$$
\item[(3)] for any two composable morphisms $\alpha\colon [m_1]\times\dots \times [m_n]\to [\ell_1]\times\dots\times [\ell_n]$
and $\beta\colon [\ell_1]\times\dots\times [\ell_n]\to [p_1]\times\dots\times [p_n]$ an element 
$$
g(\alpha,\beta)\in X_{p_1,\dots,p_n}
$$
{\it It is a cycle of degree 0 in the dg algebra $X_{p_1,\dots,p_n}$, which descents to the unit of the cohomology algebra $H^\udot(X_{p_1,\dots,p_n})$},
\item[(4)] to each object $\underline{m}=[m_1]\times\dots\times [m_n]\in (\Delta_f^\opp)^{\times n}$, an element
$$
g(\underline{m})\in X_{m_1,\dots,m_n}
$$
{\it It is a cycle of degree 0 in the dg algebra $X_{m_1,\dots,m_n}$, which descents to the unit of the cohomology algebra $H^\udot(X_{m_1,\dots,m_n})$}, 
\item[(5)] maps of dg algebras which are quasi-equivalences in $\Vect(\k)$
$$
\theta_{m_1,\dots,(m_s,m_s^\prime), m_{s+1},\dots,m_n}\colon X_{m_1,\dots,m_s+m_s^\prime,\dots,m_n}\to
X_{m_1,\dots,m_s,\dots,m_n}\otimes X_{m_1,\dots,m_s^\prime,\dots,m_n}
$$
defined for all $m_1,\dots,m_s,m_s^\prime,\dots,m_n$, 
\end{itemize}
subject to the following conditions:
\begin{itemize}
\item[(i)] for any two composable $\alpha,\beta$ in $(\Delta_f^\op)^{\times n}$ and any $x\in X_{m_1,\dots,m_n}$, one has
\begin{equation}
(\beta_*\alpha_*(x))\circ g(\alpha,\beta)=g(\alpha,\beta)\circ((\beta\alpha)_*(x))
\end{equation}
where $\circ$ is the product in the monoid $X_{p_1,\dots,p_n}$,
\item[(ii)] $g(\alpha,\beta)$ is equal to the unit of $X_{p_1,\dots,p_n}$ if both $\alpha$ and $\beta$ act on the same factor $\Delta_f^\opp$ in $(\Delta_f^\opp)^{\times n}$, keeping the remaining $n-1$ factors fixed,
\item[(iii)] for any three composable arrows $\alpha\colon [m_1]\times\dots\times [m_n]\to [\ell_1]\times \dots\times[\ell_n]$,
$\beta\colon [\ell_1]\times\dots\times [\ell_n]\to [p_1]\times\dots\times[p_n]$, $\gamma\colon 
[p_1]\times\dots\times[p_n]\to [q_1]\times\dots\times [q_n]$, and for any $x\in X_{m_1,\dots,m_n}$, one has
the following equality in $X_{q_1,\dots,q_n}$:
\begin{equation}
g(\beta,\gamma)\circ g(\alpha,\gamma\beta)=
\gamma_*(g(\alpha,\beta))\circ g(\beta\alpha,\gamma)
\end{equation}
where $\circ$ is the product in $X_{q_1,\dots,q_n}$,
\item[(iv)] for any $\alpha\colon \underline{m}:=[m_1]\times\dots \times [m_n]\to [\ell_1]\times\dots\times [\ell_n]:=\underline{\ell}$ one has:
\begin{equation}
g(\underline{\ell})\circ g(1,\alpha)=1_{X_{\ell_1,\dots,\ell_n}},\ \  \alpha_*(g(\underline{m}))\circ g(\alpha,1)=1_{X_{\ell_1,\dots,\ell_n}}
\end{equation}
\item[(v)] the map $\theta_{m_1,\dots,m_{i-1},(?,?),m_{i+1},\dots,m_n}$ is colax-monoidal 
for any $i$ and for any fixed \\ $m_1,\dots,m_{i-1},m_{i+1},\dots,m_n$, 
\item[(vi)]
for any morphisms in $\Delta_f^\opp$ $\alpha_j\colon [m_j]\to [\ell_j]$ for $j\ne i$, and $\alpha_i^\prime\colon [m_i^\prime]\to [\ell_i^\prime],\alpha_i^\dprime \colon [m_i^\dprime]\to[\ell_i^\dprime]$ in $\Delta_f^\opp$, the diagram below commutes
\begin{equation}
\xymatrix{
X_{m_1,\dots,m_i^\prime+m_i^\dprime,\dots,m_n}\ar[rr]^{{\theta\hspace{25pt}}}\ar[d]_{(f^\otimes)_*}&&X_{m_1,\dots,m_i,\dots,m_n}\otimes X_{m_1,\dots,m_i^\dprime,\dots,m_n}\ar[d]^{(f^\prime)_*\otimes (f^\dprime)_*}\\
X_{\ell_1,\dots,\ell_i^\prime+\ell_i^\dprime,\dots,\ell_n}\ar[rr]^{{\theta\hspace{25pt}}}&&X_{\ell_1,\dots,\ell_i^\prime,\dots,\ell_n}\otimes X_{\ell_1,\dots,\ell_i^\dprime,\dots,\ell_n}
}
\end{equation}
where $f^{\otimes},f^\prime,f^\dprime$ are the morphisms in $(\Delta_f^\opp)^{\times n}$, given by formulas
$$
f^{\otimes}=(\alpha_1,\dots,\alpha_i^\prime\otimes\alpha_i^\dprime,\dots,\alpha_n),\ \ f^\prime=
(\alpha_1,\dots,\alpha_i^\prime,\dots,\alpha_n),\ \ f^\dprime=(\alpha_1,\dots,\alpha_i^\dprime,\dots,\alpha_n)
$$
\item[(vii)] $X_{m_1,\dots,m_n}=\k$, if $m_i=0$ for some $i$,
\item[(viii)] for any $1\le i\le n$, and for any $m_1,\dots,m_n$, the composition 
\begin{equation}
X_{m_1,\dots,m_n}\xrightarrow{\theta}X_{m_1,\dots,m_i,\dots,m_n}\otimes X_{m_1,\dots,\underset{i\text{-th
}}{0},\dots,m_n}\xrightarrow{\mathrm{(vii)}}X_{m_1,\dots,m_n}
\end{equation}
is the identity map.

\end{itemize}

}
\end{defn}
\begin{remark}{\rm
One can easily see from (vi), as a very particular case, that the diagram below commutes:
\begin{equation}
\xymatrix{
X_{m_1,\dots,m_i,\dots m_n}\otimes X_{m_1,\dots,m_i^\prime,\dots,m_n}\ar[d]_{\id_*\otimes (\alpha_i^\prime)_*}&X_{m_1,\dots,m_i+m_i^\prime,\dots,m_n}\ar[l]\ar[dd]_{(\alpha_i\otimes\alpha_i^\prime)_*}\ar[r]&X_{m_1,\dots,m_i,\dots m_n}\otimes X_{m_1,\dots,m_i^\prime,\dots,m_n}
\ar[d]^{(\alpha_i)_*\otimes\id_*}\\
X_{m_1,\dots,m_i,\dots m_n}\otimes X_{m_1,\dots,\ell_i^\prime,\dots,m_n}\ar[d]_{(\alpha_i)_*\otimes\id_*}&&X_{m_1,\dots,\ell_i,\dots m_n}\otimes X_{m_1,\dots,m_i^\prime,\dots,m_n}\ar[d]^{\id_*\otimes(\alpha_i^\prime)_*}\\
X_{m_1,\dots,\ell_i,\dots m_n}\otimes X_{m_1,\dots,\ell_i^\prime,\dots,m_n}&X_{m_1,\dots,\ell_i+\ell_i^\prime,\dots,m_n}\ar[r]\ar[l]&X_{m_1,\dots,\ell_i,\dots m_n}\otimes X_{m_1,\dots,\ell_i^\prime,\dots,m_n}
}
\end{equation}
where the horizontal arrows are the maps $\theta$'s introduced in (5).
}
\end{remark}

\section{\sc The Keller's and Drinfeld's constructions of dg quotient}
We refer the reader to [K2] for the definition and basic facts on differential graded (dg) categories.
\subsection{\sc The dg quotient of dg categories}
The dg quotient $\mathscr{C}/\mathscr{C}_0$ of a dg category $\mathscr{C}$ by an {\it essentially small} full dg subcategory $\mathscr{C}_0$ was firstly introduced by Bernhard Keller in [K1, K3]. It is a dg category, defined uniquely up to a quasi-equivalence. When $\mathscr{C}$ is a pre-triangulated dg category, and $\mathscr{C}_0$ its full essentially small pre-triangulated dg subcategory, it has the following property.

\begin{prop}[B.Keller, {[K3, Section 4]}]\label{keller}
Let $\mathscr{C}$ be a pre-triangulated dg category, $\mathscr{C}_0$ its essentially small full pre-triangulated dg sub-category.
Then is a pre-triangulated dg category $\mathscr{C}/\mathscr{C}_0$ whose triangulated category
\begin{equation}
H^0(\mathscr{C}/\mathscr{C}_0)\simeq H^0(\mathscr{C})/H^0(\mathscr{C}_0)
\end{equation}
where the quotient in the right-hand side is the Verdier quotient of triangulated categories.
\end{prop}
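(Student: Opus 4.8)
\emph{Proof proposal.} The plan is to realise the dg quotient by Drinfeld's explicit model and then identify its homotopy category. First I would construct $\mathscr{C}/\mathscr{C}_0$ by taking the same objects as $\mathscr{C}$ and freely adjoining, for every object $U$ of $\mathscr{C}_0$, a new endomorphism $\epsilon_U$ of degree $-1$ subject to $d\epsilon_U=\id_U$. Concretely $(\mathscr{C}/\mathscr{C}_0)(X,Y)$ is the direct sum, over all $n\geq 0$ and all tuples $U_1,\dots,U_n$ of objects of $\mathscr{C}_0$, of the graded spaces $\mathscr{C}(U_n,Y)\otimes\mathscr{C}(U_{n-1},U_n)[1]\otimes\cdots\otimes\mathscr{C}(U_1,U_2)[1]\otimes\mathscr{C}(X,U_1)$ (the $n=0$ term being $\mathscr{C}(X,Y)$), with differential combining the internal differentials of $\mathscr{C}$ with the terms produced by $d\epsilon_{U_i}=\id_{U_i}$, and with composition given by concatenation of such strings. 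One checks this is a dg category and that there is a canonical dg functor $Q\colon\mathscr{C}\to\mathscr{C}/\mathscr{C}_0$ which is the identity on objects; the hypothesis that $\mathscr{C}_0$ is essentially small guarantees the construction stays within our size conventions.

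Next I would treat pre-triangulatedness and produce the comparison functor. Since $Q$ is bijective on objects and $\mathscr{C}$ is pre-triangulated, for $X,Y\in\mathscr{C}$ the shift $X[1]$ and, for any closed degree-zero $f\colon X\to Y$, the cone $\Cone(f)$ are already objects of $\mathscr{C}$, hence of $\mathscr{C}/\mathscr{C}_0$; I would verify that they continue to (co)represent the appropriate dg functors after passing to $\mathscr{C}/\mathscr{C}_0$, so that $\mathscr{C}/\mathscr{C}_0$ is pre-triangulated and $H^0(\mathscr{C}/\mathscr{C}_0)$ is triangulated with $H^0(Q)$ exact. In $H^0(\mathscr{C}/\mathscr{C}_0)$ every object $U$ of $\mathscr{C}_0$ becomes a zero object, because $\epsilon_U$ is a contracting homotopy for $\id_U$; hence $H^0(Q)$ kills $H^0(\mathscr{C}_0)$ and, by the universal property of the Verdier quotient, factors as $H^0(\mathscr{C})\to H^0(\mathscr{C})/H^0(\mathscr{C}_0)\xrightarrow{\ \bar Q\ }H^0(\mathscr{C}/\mathscr{C}_0)$ with $\bar Q$ exact. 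As $Q$ is bijective on objects, $\bar Q$ is essentially surjective, so everything reduces to showing $\bar Q$ is fully faithful.

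This fully faithfulness is the main obstacle. I would prove it by computing $H^0$ of the explicit complex $(\mathscr{C}/\mathscr{C}_0)(X,Y)$ written above: filter it by the string-length $n$ and run the resulting spectral sequence, or equivalently perform a direct homotopy-contraction argument exploiting that each object of $\mathscr{C}_0$ is contractible in $H^0(\mathscr{C}/\mathscr{C}_0)$ and that $\mathscr{C}$ is pre-triangulated. The expected outcome is a description of $H^0\big((\mathscr{C}/\mathscr{C}_0)(X,Y)\big)$ as equivalence classes of roofs $X\leftarrow X'\to Y$ whose leg $X'\to X$ has cone in the thick subcategory generated by $\mathscr{C}_0$ — precisely the calculus-of-fractions description of $\Hom_{H^0(\mathscr{C})/H^0(\mathscr{C}_0)}(X,Y)$ — and comparing this with the construction of $\bar Q$ shows $\bar Q$ is bijective on Hom-sets. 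If the combinatorial bookkeeping in this spectral sequence becomes unwieldy, I would instead embed $\mathscr{C}/\mathscr{C}_0$ into the dg category of (semi-free) dg $\mathscr{C}$-modules and deduce the statement from the fact that the derived category of modules localises correctly along $\mathscr{C}_0$, which is closer to Keller's original argument in [K1], [K3].
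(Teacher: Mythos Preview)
The paper does not supply its own proof of this proposition; it is quoted as a result of Keller [K3, Section~4]. More to the point, the surrounding text explicitly warns that the description in Proposition~\ref{keller} ``seemingly can be not seen directly from the Drinfeld's construction'' and that it ``seems impossible to prove directly with the Drinfeld's construction of dg quotient''. The route the paper relies on is precisely your fallback: Keller's construction realises $\mathscr{C}/\mathscr{C}_0$ inside the derived category of dg $\mathscr{C}$-modules, where the identification $H^0(\mathscr{C}/\mathscr{C}_0)\simeq H^0(\mathscr{C})/H^0(\mathscr{C}_0)$ is visible from standard localisation of triangulated categories; Drinfeld's model is then linked to Keller's via the universal property both satisfy.

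Your primary line of attack therefore attempts exactly what the paper cautions against, and the gap is in the sentence ``The expected outcome is a description of $H^0\big((\mathscr{C}/\mathscr{C}_0)(X,Y)\big)$ as equivalence classes of roofs\ldots''. The string-length spectral sequence has an $E_1$ page that is a bar-type object built from $H^*$ of $\Hom_\mathscr{C}$-spaces through objects of $\mathscr{C}_0$; nothing you have written explains why this collapses to the calculus of fractions for morphisms with cone in $\mathscr{C}_0$. Iterated compositions through $\mathscr{C}_0$ and roofs inverting cones-in-$\mathscr{C}_0$ are genuinely different combinatorial shapes, and bridging them is the whole content of the theorem, not a bookkeeping detail. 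A secondary issue: your argument that $\mathscr{C}/\mathscr{C}_0$ is pre-triangulated only handles cones of closed degree-$0$ morphisms that come from $\mathscr{C}$, whereas Drinfeld's quotient has many new closed morphisms involving the $\epsilon_U$'s. I would drop the direct spectral-sequence plan and promote your final paragraph (embed in dg modules, as in [K1], [K3]) to the main argument; that is the proof in the literature and the one the paper is invoking.
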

The dg quotient is a functor from the category of pairs of dg categories $(\mathscr{C},\mathscr{C}_0)$ with $\mathscr{C}_0$ essentially small full subcategory, to the homotopy category of dg categories, which can be characterized by a universal property (see below).
In the case when $\mathscr{C}$ is pre-triangulated, the dg quotient $\mathscr{C}/\mathscr{C}_0$ has the same image in the homotopy category of dg categories as the To\"{e}n dg localization [To1, Section 8.2] $\mathscr{C}[S^{-1}]$ where $S$ is the set of closed degree 0 morphisms $s$ in $\mathscr{C}$ such that $\mathrm{Cone}(s)\in\mathscr{C}_0$.

\smallskip

V.Drinfeld [Dr2] provided another construction of the dg quotient $\mathscr{C}/\mathscr{C}_0$ (which we recall in Section \ref{drq}).
It is beneficial by being defined as a honest dg category, not just as an object of the homotopy category of dg categories.
It has the same objects as $\mathscr{C}$, and its morphisms are obtained as some free envelope of the morphisms in $\mathscr{C}$ with newly added morphisms of degree -1, that kill up to homotopy the objects in $\mathscr{C}_0$.

In this Section, we provide a refinement of the Drinfeld construction of dg quotient, which has the same homotopy type, but enjoys a more manageable behavior with respect to the tensor product, see Proposition \ref{gendr} below. 

Drinfeld formulated in [Dr2] a universal property, which characterizes a dg quotient uniquely, up to an isomorphism, as an objects of the homotopy category of dg categories $\Ho\mathscr{C}at^\dg(\k)$.
Tabuada [Tab] re-considered the question on the universal property of Drinfeld's dg quotient and proved a refined version of it; the result below is due to Tabuada [Tab, Theorem 4.0.1].
Denote by $[\mathscr{X}]$ the object of the homotopy category (the localization by quasi-equivalences) $\Ho\Cat^\dg(\k)$, corresponded to a dg category $\mathscr{X}$.
The universal property of dg quotient reads:
\begin{theorem}[Drinfeld, Tabuada]\label{druniv}
Let $\mathscr{C}\supset\mathscr{C}_0$ dg categories, with $\mathscr{C}_0$ essentially small.
The morphisms $F\colon [\mathscr{C}]\to[\mathscr{D}]$ in $\Ho\mathscr{C}at^\dg(\k)$ such that the corresponding functor $H^0F\colon H^0[\mathscr{C}]\to H^0[\mathscr{D}]$ of homotopy categories of dg categories sends the image of $H^0[\mathscr{C}_0]$ in $H^0[\mathscr{C}]$ to 0, are in 1-to-1 correspondence with the morphisms  $\overline{F}\colon [\mathscr{C}/\mathscr{C}_0]\to[\mathscr{D}]$ in $\Ho\mathscr{C}at^\dg(\k)$. {\rm (}To say that $H^0(F)$ maps $H^0(\mathscr{C}_0)$ to 0 means, by definition, that  $H^0F(\id_X)$ for any $X$ in the image of $H^0[\mathscr{C}_0]$ in $H^0[\mathscr{C}]$, is zero morphism in $H^0[\mathscr{D}]${\rm )}.
\end{theorem}
Drinfeld [Dr2] and Tabuada proved [Tab] proved that both constructions of dg quotient, the one of Keller and the one of himself, fulfil this universal property. Therefore, they define isomorphic objects of the homotopy category $\Ho\mathscr{C}at^\dg(\k)$. We make an essential use of this result; it plays in our paper the role similar to the Dwyer-Kan result [DK, Corollary 4.7] in the Kock-To\"{e}n non-linear Deligne conjecture [KT].

\subsection{\sc Drinfeld dg quotient}\label{drq}
Let $\mathscr{C}$ be a dg category over a field $\k$, and let $\mathscr{C}_0$ be its essentially small full dg subcategory. Drinfeld defines [Dr] the dg quotient
$\mathscr{C}/\mathscr{C}_0$ as follows.

The category $\mathscr{C}/\mathscr{C}_0$ has the same objects as $\mathscr{C}$, and the category $\mathscr{C}$ is embedded into $\mathscr{C}/\mathscr{C}_0$ as a dg category. Choose an object $X$ in $\mathscr{C}_0$ for any class of isomorphism of objects in $\mathscr{C}_0$, these objects $\{X\}$ define a sub-category $\overline{\mathscr{C}_0}\subset\mathscr{C}_0$ . The assumption that $\mathscr{C}_0$ is essentially small guarantees that $\overline{\mathscr{C}}_0$ is small. For any object $X$ in $\overline{\mathscr{C}_0}$, a new morphism $\varepsilon_X$ in $\Hom(X,X)$ of degree -1, with $d(\varepsilon_X)=\id_X$, is added, without any relations. By definition, $\mathscr{C}/\mathscr{C}_0$ is the category with the objects $\{\Ob\mathscr{C}\}$, and whose morphisms are obtained as the free algebraic envelope of $\{\varepsilon_X\}_{X\in\mathscr{C}_0}$ with the morphisms of $\mathscr{C}$.

More precisely, for any $X,Y\in \Ob\mathscr{C}$, the underlying graded $\k$-vector space of morphisms is $$\Hom_{\mathscr{C}/\mathscr{C}_0}(X,Y)=\oplus\Hom_{\mathscr{C}/\mathscr{C}_0}^{(n)}(X,Y)$$
where
\begin{equation}\label{drinmor}
\Hom_{\mathscr{C}/\mathscr{C}_0}^{(n)}(X,Y)=\oplus_{Y_0,\dots,Y_{n-1}\in\mathscr{C}_0}\Hom_{\mathscr{C}}(X,Y_0)\otimes k[1]\otimes
\Hom_{\mathscr{C}}(Y_0,Y_1)\otimes k[1]\otimes \dots\otimes k[1]\otimes \Hom_{\mathscr{C}}(Y_{n-1},Y)
\end{equation}
where the $i$-th factor $k[1]$ is spanned by $\varepsilon_{Y_{i+1}}$. Here in \eqref{drinmor} some of the objects $Y_i$s may coincide.

The differential maps $\Hom_{\mathscr{C}/\mathscr{C}_0}^{(n)}$ to $\Hom_{\mathscr{C}/\mathscr{C}_0}^{(n-1)}$, and the category $\mathscr{C}/\mathscr{C}_0$ is endowed with an ascending filtration.

\comment
\begin{remark}{\rm
One easily sees that the category $\mathscr{C}/\mathscr{C}_0$ is quasi-equivalent to the category $(\mathscr{C}/\mathscr{C}_0)^\prime$ which is the free product of the category $\mathscr{C}$ with the morphisms $\{\varepsilon_X,\ X\in\mathscr{C}_0,\ d\varepsilon_X=\id_X\}$. (The difference between the two ones is that the {\it iterated} compositions like $\varepsilon_X^{\circ m}$ do not vanish in $(\mathscr{C}/\mathscr{C}_0)^\prime$). The quasi-equivalence holds as the complex
\begin{equation}
\dots k[1]^{\otimes n}\to k[1]^{\otimes (n-1)}\to\dots\to k[1]^{\otimes 2}\to k[1]\to k\to 0
\end{equation}
where the differentials are defined from $d(\varepsilon)=1$ (where $\varepsilon=1[1]$) by the odd Leibniz rule, is quasi-isomorphic to the complex
\begin{equation}
0\to k[1]\to k\to 0
\end{equation}
which is in fact used in the Drinfeld's construction. That is, one can think on the Drinfeld's category $\mathscr{C}/\mathscr{C}_0$ as on the simplified for computations quasi-equivalent free construction $(\mathscr{C}/\mathscr{C}_0)^\prime$.
}
\end{remark}
\endcomment

The dg category $\mathscr{C}/\mathscr{C}_0$ does not depend, up to a quasi-equivalence, on the choice of small dg subcategory $\overline{\mathscr{C}_0}$. Moreover, different choices of $\overline{\mathscr{C}_0}$ result in {\it equivalent} (not just quasi-equivalent) dg categories.

It implies that we have a {\it functor}
\begin{equation}
\mathscr{P}_1\mathscr{C}at^\dg\to\mathscr{C}at^\dg
\end{equation}
from the category $\mathscr{P}_1\mathscr{C}at^\dg$ of pairs $(\mathscr{C},\mathscr{C}_0)$ with $\mathscr{C}_0$ essentially small, to the category $\mathscr{C}at^\dg$ (not just to the homotopy category $\Ho\mathscr{C}at^\dg$).

\subsection{\sc Drinfeld's dg quotient: a refinement}\label{section330}
Let $\mathscr{C}$ be a dg category, and let $\mathscr{C}_1,\mathscr{C}_2,\dots,\mathscr{C}_k$ be its full essentially small dg subcategories.
Here we construct a dg category
$$
\mathscr{C}/(\mathscr{C}_1,\dots,\mathscr{C}_k)
$$
called {\it the refined Drinfeld dg quotient}.

First of all, we replace the essentially small full categories $\mathscr{C}_i$ by small full categories $\overline{\mathscr{C}}_i$, taking an object from each isomorphism class of objects in $\mathscr{C}_i$. We take care that the chosen objects agree for all intersections $\mathscr{C}_i\cap\mathscr{C}_j$, and thus the categories
 \begin{equation}\label{choicesxxx}
\overline{\mathscr{C}}_{i_1}\cap\dots\cap\overline{\mathscr{C}}_{i_\ell}\sim \mathscr{C}_{i_1}\cap\dots\cap\mathscr{C}_{i_\ell}
\end{equation}
are equivalent.
It is clear that it is always possible to achieve.

For any $X=X_i\in \overline{\mathscr{C}}_i$, $i=1,\dots,k$, we add formally an element $\varepsilon^i_X$ which is a morphism from $X$ to $X$ of degree -1, with the differential $d\varepsilon^i_X=\id_X$.

For any $X=X_{ij}\in\overline{\mathscr{C}}_i\cap\overline{\mathscr{C}}_j$, $i<j$, we introduce formally a morphism $\varepsilon^{ij}_X$ from $X$ to itself of degree -2, with $d\varepsilon^{ij}_X=\varepsilon^i_X-\varepsilon^j_X$.

For any $X=X^{ijk}\in\overline{\mathscr{C}}_i\cap\overline{\mathscr{C}}_j\cap\overline{\mathscr{C}}_k$, $i<j<k$, we introduce formally a morphism $\varepsilon_X^{ijk}$ from $X$ to itself of degree -3, with $d\varepsilon^{ijk}_X=\varepsilon^{ij}_X-\varepsilon^{ik}_X+\varepsilon^{jk}_X$, and so on.

The sign rule is as in the \v{C}ech cohomology theory, which implies $d^2=0$.

Now the dg category $\mathscr{C}/(\mathscr{C}_1,\dots,\mathscr{C}_k)$ has the same objects as the dg category $\mathscr{C}$, and the morphisms are freely generated by the morphisms in $\mathscr{C}$ with the given composition among them, and by the newly added morphisms $\varepsilon_X^{i_1i_2\dots i_s}$ of degree $-s$, with the differentials of $\varepsilon_X^{i_1,\dots,i_s}$ defined as above and extended to the whole morphisms by the Leibniz rule. 

Denote by $\mathscr{C}_\Sigma$ the full dg subcategory of $\mathscr{C}$ having the objects
$$
\Ob\mathscr{C}_\Sigma=\bigcup_{i=1}^k\Ob\overline{\mathscr{C}}_i
$$

Consider the Drinfeld dg quotient $\mathscr{C}/\mathscr{C}_\Sigma$.
There is a natural dg functor
\begin{equation}\label{psi}
\Psi\colon\mathscr{C}/(\mathscr{C}_1,\dots,\mathscr{C}_k)\to\mathscr{C}/\mathscr{C}_\Sigma
\end{equation}
sending all $\varepsilon_X^i$ to $\varepsilon_X$, for $X\in\Ob\mathscr{C}_\Sigma$, and sending all $\varepsilon_X^{i_1\dots i_s}$ to 0, for $s>1$.

\begin{lemma}\label{angle}
In the above notations, the map $\Psi$ is a quasi-equivalence of dg categories.
\end{lemma}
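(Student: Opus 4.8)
The plan is to reduce the statement to the assertion that $\Psi$ induces quasi-isomorphisms on $\Hom$-complexes, and then to prove the latter by comparing both sides through the filtration by the length of the zig-zag. Since both $\mathscr{C}/(\mathscr{C}_1,\dots,\mathscr{C}_k)$ and $\mathscr{C}/\mathscr{C}_\Sigma$ have $\Ob\mathscr{C}$ as their set of objects and $\Psi$ is the identity on objects, $\Psi$ is a quasi-equivalence precisely when it is quasi-fully faithful; so it is enough to show that for every pair $X,Y\in\Ob\mathscr{C}$ the chain map $\Psi_{X,Y}\colon\Hom_{\mathscr{C}/(\mathscr{C}_1,\dots,\mathscr{C}_k)}(X,Y)\to\Hom_{\mathscr{C}/\mathscr{C}_\Sigma}(X,Y)$ is a quasi-isomorphism of complexes of $k$-vector spaces.

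Both $\Hom$-complexes admit a bar-type description analogous to \eqref{drinmor}: each is the direct sum, over finite sequences $Z_0,Z_1,\dots,Z_{n-1}$ ($n\ge0$, repetitions allowed) of objects of $\mathscr{C}_\Sigma$, of the tensor products
$$
\Hom_\mathscr{C}(X,Z_0)\otimes R(Z_0)\otimes\Hom_\mathscr{C}(Z_0,Z_1)\otimes\dots\otimes R(Z_{n-1})\otimes\Hom_\mathscr{C}(Z_{n-1},Y),
$$
where the local factor $R(Z)$ is, on the source, the span of the generators $\varepsilon^I_Z$ with $\emptyset\ne I\subseteq S_Z:=\{i:Z\in\overline{\mathscr{C}}_i\}$ (with $\varepsilon^I_Z$ in degree $-|I|$), and, on the target, the one-dimensional space $k\cdot\varepsilon_Z$ in degree $-1$. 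On such a summand the differential is the sum of: (a) the internal differentials of the $\Hom_\mathscr{C}$-factors; (b) the \v{C}ech-type differential $\varepsilon^I_Z\mapsto\sum_{j\in I}\pm\,\varepsilon^{I\setminus\{j\}}_Z$ on each $R(Z_\ell)$ for $|I|\ge2$ (absent on the target); and (c) the terms coming from $d\varepsilon^{\{i\}}_Z=\id_Z$ (resp.\ $d\varepsilon_Z=\id_Z$), which delete the factor $R(Z_\ell)$ and the object $Z_\ell$ and compose the two adjacent $\Hom_\mathscr{C}$-factors through $Z_\ell$. Filter both complexes by letting $F_p$ be the sum of the summands with $n\le p$; then $F_{-1}=0$, $F_0=\Hom_\mathscr{C}(X,Y)$, the filtration is exhaustive, and each $F_p$ is a subcomplex preserved by $\Psi_{X,Y}$, since (a) and (b) preserve $n$ while (c) lowers it by one.

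On the associated graded only (a) and (b) survive, so $\gr^F$ of each side is the direct sum over $(Z_0,\dots,Z_{n-1})$ of the same tensor products with each $R(Z_\ell)$ carrying only its \v{C}ech-type differential. On the source, $R(Z)$ with this differential is, up to the shift placing the $\varepsilon^{\{i\}}_Z$ in degree $-1$, the non-augmented simplicial chain complex of the full simplex on the vertex set $S_Z$ (i.e.\ of all nonempty subsets of $S_Z$); as a simplex is contractible, its homology is $k$, concentrated in degree $-1$ and spanned by the common class of the $\varepsilon^{\{i\}}_Z$. On the target the corresponding complex is $k\cdot\varepsilon_Z$, of the same homology, and $\Psi$ sends $[\varepsilon^{\{i\}}_Z]$ to $[\varepsilon_Z]$. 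Since everything is over the field $k$, the K\"unneth formula shows that $\Psi$ induces an isomorphism on the homology of each summand of $\gr^F$, so $\gr^F\Psi_{X,Y}$ is a quasi-isomorphism. Consequently the cone $G=\Cone(\Psi_{X,Y})$, with the induced filtration, has $\gr^FG=\Cone(\gr^F\Psi_{X,Y})$ acyclic; because $F_{-1}G=0$, the long exact homology sequences of the pairs $F_{p-1}\subseteq F_p$ give $H_\ldot(F_pG)=0$ for all $p$ by induction on $p$, and, the filtration being exhaustive, $H_\ldot(G)=\colim_pH_\ldot(F_pG)=0$. Hence $\Psi_{X,Y}$ is a quasi-isomorphism and $\Psi$ is a quasi-equivalence.

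The step that needs the most care is the precise combinatorial bookkeeping behind the bar-type description above, and in particular part (c) of the differential: since the sequences $(Z_\ell)$ allow repetitions, one must track carefully how the identity morphisms of $\mathscr{C}$ interact with the absorption of an $\varepsilon^{\{i\}}_Z$ into its neighbours, together with the signs (which follow the \v{C}ech sign rule already fixed in the construction). Once that model is pinned down, the filtration and cone arguments are routine.
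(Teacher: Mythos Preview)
Your proof is correct and follows essentially the same strategy as the paper: the paper isolates the same local combinatorial input, phrased there as the acyclicity of the exterior-algebra complex $\Lambda^\udot_d(V)$ (the augmented simplex chain complex on $S_Z$) together with the quasi-isomorphism $\Psi_V\colon\Lambda^\udot_d(V)\to\Lambda^\udot_d(U_1)$, and asserts that the lemma ``easily follows'' from this. You have supplied exactly the filtration-by-length and cone/induction argument that makes this passage precise, using the equivalent non-augmented formulation (homology $k$ in degree $-1$) in place of the augmented one; the underlying idea is the same.
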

\begin{proof}
The assertion easily follows from the two following elementary remarks.

Let $V$ be a vector space of dimension $n$, with fixed basis $\{e_1,\dots,e_n\}$. Consider the complex $\Lambda^\udot_d(V)$, having the component $\Lambda^\ell V$ in degree $-\ell$, with the differential
of degree $+1$, defined as
\begin{equation}\label{lambda}
d(e_{i_1}\wedge\dots\wedge e_{i_\ell})=\sum_{s=1}^\ell(-1)^{s-1}e_{i_1}\wedge\dots\wedge\hat{e_{i_s}}\wedge\dots\wedge e_{i_\ell}
\end{equation}
Then the complex $\Lambda^\udot_d(V)$ is acyclic in all degrees.

The latter claim is clear: there is the homotopy operator $h\colon\Lambda^\ell V\to \Lambda^{\ell+1}V$,
$h(\omega)=(e_1+e_2+\dots+e_n)\wedge\omega$, with $[d,h](\omega)=\pm \omega$.

Consider the complex $\Lambda^\udot_d(U_1)$, for $U_1$ the 1-dimensional vector space over $\k$, with a choused basis vector $e$. The natural map
$\Psi_V\colon\Lambda^\udot_d(V)\to\Lambda^\udot_d(U_1)$ which maps $\Lambda^\ell V$ to 0 for $\ell>1$, and with maps each basis vector $e_i$ in $V$ to the basis vector $e$ in $U_1$. It is a quasi-isomorphism of (acyclic) complexes.
\end{proof}

\subsection{\sc The monoidal property of the generalized Drinfeld dg quotient}\label{section33}
Define the category $\mathscr{C}at^\dg_-(\k)$ as the category of small $\k$-linear dg categories $\mathscr{C}$, such that for any $X,X^\prime\in\Ob\mathscr{C}$, the graded components $\Hom^i(X,X^\prime)=0$ for $i>N(X,X^\prime)$, for some integral number $N(X,X^\prime)$ depending on $X,X^\prime$.

The category $\mathscr{C}at^\dg_-(\k)$ is a monoidal category. The monoidal product $\mathscr{C}\otimes\mathscr{D}$ of two categories $\mathscr{C},\mathscr{D}\in\mathscr{C}at^\dg_-(\k)$ has objects $\Ob\mathscr{C}\times\Ob\mathscr{D}$,
and
\begin{equation}
\Hom_{\mathscr{C}\times\mathscr{D}}(X\times Y,X^\prime\times Y^\prime)=\Hom_{\mathscr{C}}(X,X^\prime)\otimes_k\Hom_{\mathscr{D}}(Y,Y^\prime)
\end{equation}
The bounding condition on the morphism complexes is imposed to make the tensor product in the right-hand side well-behaved.

\smallskip

Introduce the category $\mathscr{PC}at^\dg_-(\k)$, as follows.

We firstly define categories $\mathscr{P}_n\mathscr{C}at^\dg_{-}(\k)$, $n\ge 0$.
An object of $\mathscr{P}_n{C}at_-^\dg(\k)$ is a pair $(\mathscr{C};\  \mathscr{C}_1,\dots,\mathscr{C}_n)$,
where $\mathscr{C}\in\mathscr{C}at^\dg_-(\k)$, and the second argument is an ordered $n$-tuple of its {\it small} full dg subcategories $\mathscr{C}_1,\dots,\mathscr{C}_n$.

A morphism $f\colon (\mathscr{C};\ \mathscr{C}_1,\dots,\mathscr{C}_n)\to
(\mathscr{D};\ \mathscr{D}_1,\dots,\mathscr{D}_n)$ in $\mathscr{P}_n{C}at^\dg_-(\k)$ is given by a dg functor $f_0\colon \mathscr{C}\to\mathscr{D}$, such that
$f_0(\mathscr{C}_i)\subset\mathscr{D}_i$, for any $i=1\dots n$.

Next, we construct a functor $\Theta\colon \Delta_f^\opp\to \Cat$, $[n]\mapsto \mathscr{P}_n\mathscr{C}at^\dg_{-}(\k)$. 
One has to define actions of the face maps $F_i\colon [n]\to [n+1]$, $1\le i\le n$, and of the degeneracy maps $D_i\colon [n+1]\to [n], 0\le i \le n$.

We set:
\begin{equation}
\begin{aligned}
\ &F_i(C;C_1,\dots, C_{n+1})=(C; C_1,\dots, C_i\cup C_{i+1}, \dots, C_{n+1})\\
&D_i(C;C_1,\dots,C_n)=(C; C_1,\dots,C_i,\emptyset,C_{i+1},\dots,C_n)
\end{aligned}
\end{equation}
where $C_{i}\cup C_{i+1}$ denotes the full subcategory of $C$ with objects $\Ob (C_i)\cup \Ob(C_{i+1})$. 

One easily checks that it indeed gives rise to a functor $\Delta_f^\opp\to\Cat$.

Now we define the category $\mathscr{PC}at^\dg_-(\k)$ as the (cofibred) {\it Grothendieck construction} of the functor $\Theta$ (see [Gr, Exp.VI], [Th]). 

Recall the general definition. 
Let $F\colon\mathscr{K}\to\Cat$ be a (strict) functor (an analogous construction also exists for $F$ a pseudo-functor).
Here we need the cofibred Grothendieck construction $\mathscr{K}{\int_c}F$, which we shortly denote by $\mathscr{K}{\int}F$. 

The objects of $\mathscr{K}\int F$ are pairs $(K,X)$ were $K\in \mathscr{K}$ and $X\in F(K)$.
A morphism $(k,x)\colon (K_1,X_1)$ $\to (K_0,X_0)$ is given by a morphism $k\colon K_1\to K_0$ in $\mathscr{K}$ and a morphism
$x\colon F(k)(X_1)\to X_0$ in $F(K_0)$.
The composition is defined as $(k,x)\ldot (k^\prime, x^\prime)=(kk^\prime, x\ldot F(k)(x^\prime))$.

The functor $p\colon \mathscr{K}\int F\to\mathscr{K}$ is defined as the projection to the first component. It makes $\mathscr{K}\int F$ a cofibred over $\mathscr{K}$ category. 

We use the following lax colimit interpretation of the Grothendieck construction:
\begin{lemma}\label{colaxlimit}
Let $\mathscr{K}$ be a small category, $F\colon \mathscr{K}\to\Cat$ a strict functor, $\mathscr{C}$ a category. Then there is a bijection between the set of functors
$g\colon \mathscr{K}\int_c F\to\mathscr{C}$, and the set of data consisting of
\begin{itemize}
\item[(1)] for each object $K\in\mathscr{K}$, a functor $g(K)\colon F(K)\to\mathscr{C}$,
\item[(2)] for each morphism $k\colon K\to K^\prime$ in $\mathscr{K}$, a natural transformation
$$
g(k)\colon g(K)\to g(K^\prime)\circ F(k)
$$
\end{itemize}
such that $g(\id_K)=\id\colon g(K)\to g(K)$, and for $K^{\prime\prime}\xleftarrow{k^\prime} K^{\prime}\xleftarrow{k} K$ one has
\begin{equation}
g(k^\prime k)=g(k^\prime)\circ g(k)
\end{equation}
\end{lemma}
See e.g. [Th, Prop. 1.3.1] for a short and direct proof.

\vspace{3mm}

We define
$$
\mathscr{PC}at_-^\dg(\k):=\Delta_f^\opp\int\Theta
$$
Explicitly, an object of $\mathscr{PC}at_-^\dg(\k)$ is given by a pair $([n], (C; C_1,\dots,C_n))$, where $C_1,\dots,C_n$ are small full subcategories of $C$, possibly empty, and a morphism 
$$
f\colon ([n], (C;C_1,\dots,C_n))\to ([m],(D; D_1,\dots, D_m))
$$
is given by a pair $f=(\sigma,F)$, where $\sigma\colon [m]\to [n]$ a morphism in $\Delta_f$, and $F\colon C\to D$ a dg functor, such that $F(C_{i})\subset D_{\sigma^*(i-1)+1}$, where $\sigma^*\colon [n-1]\to [m-1]$ is the Joyal dual to $\sigma$ element of $\Delta$. 

\begin{lemma}
The category $\mathscr{PC}at^\dg_-(\k)$ is monoidal such that  functor $p\colon \mathscr{PC}at_-^\dg(\k)\to\Delta_f^\opp$ is monoidal. 
\end{lemma}
\begin{proof}
 Let
\begin{equation}
X=([n], (\mathscr{C};\ \mathscr{C}_1,\dots,\mathscr{C}_n))\text{  and  }Y=([m], (\mathscr{D};\ \mathscr{D}_1,\dots,\mathscr{D}_m))
\end{equation}
be two objects of $\mathscr{PC}at^\dg_-(\k)$. Define their product as
\begin{equation}\label{prodnice}
X\otimes Y=\left([m+n], (\mathscr{C}\otimes\mathscr{D};\ \mathscr{C}_1\otimes\mathscr{D},\dots,\mathscr{C}_n\otimes\mathscr{D},\mathscr{C}\otimes\mathscr{D}_1,\dots,\mathscr{C}\otimes\mathscr{D}_m)\right)
\end{equation}

The monoidal product $\otimes$ on $\mathscr{PC}at^\dg_-(\k)$ extends to the morphisms in the natural way. The functor $p$ is monoidal within definition in \eqref{prodnice}.
\end{proof}

\begin{lemma}\label{sectioncart}
Any pair $(\mathscr{M}, I)$, where $\mathscr{M}$ is a $\k$-linear monoidal category, and $I\subset \mathscr{M}$ an ideal in $\mathscr{M}$, defines a monoidal section $F_\mathscr{M}\colon \Delta_f^\opp\to \mathscr{PC}at_-^\dg(\k)$ of $p\colon \mathscr{PC}at_-^\dg(\k)\to\Delta_f^\opp$.
\end{lemma}
\begin{proof}
Denote by $\mathscr{M}^{\otimes n}_i$, $1\le i\le n$, the subcategory of $\mathscr{M}^{\otimes n}$, equal to $\mathscr{M}\otimes\dots\otimes\mathscr{M}\otimes I\otimes\mathscr{M}\otimes\dots\otimes\mathscr{M}$, where the factor $I$ stands at the $i$-th place. 
To construct the section $F_\mathscr{M}$ on objects, we map the object $[n]\in \Delta_f$ to the $n$-typle 
$$
(\mathscr{M}^{\otimes n};\ \mathscr{M}^{\otimes n}_1,\dots,\mathscr{M}^{\otimes n}_n)
$$
To define it on morphisms of $\Delta_f^\opp$, we do it for $F_i^*\colon [n+1]\to [n]$ and for $D_i^*\colon [n]\to[n+1]$, dual to the corresponding morphisms of $\Delta_f$. 

On the first components $\mathscr{M}^{\otimes k}$, the morphism $F_\mathscr{M}(F_i^*)$ acts as $$\id\otimes\dots\otimes m\otimes \id\otimes\dots\otimes \id$$, where $m\colon\mathscr{M}\otimes\mathscr{M}\to \mathscr{M}$ is the monoidal product bifunctor and stands at the $i$-th position. Clearly it sends 
$\mathscr{M}^{\otimes n}_1,\dots,\mathscr{M}^{\otimes n}_{i-1}$ to $\mathscr{M}^{\otimes (n-1)}_1,\dots, \mathscr{M}^{\otimes (n-1)}_{i-1}$, correspondingly, sends $\mathscr{M}^{\otimes n}_i\cup \mathscr{M}^{\otimes n}_{i+1}$ to $\mathscr{M}^{\otimes (n-1)}_i$, and sends $\mathscr{M}^{\otimes n}_{i+2},\dots,\mathscr{M}^{\otimes n}_n$ to $\mathscr{M}^{\otimes (n-1)}_{i+1},\dots, \mathscr{M}^{\otimes (n-1)}_{n-1}$. It defines a lifting of $F_i^*$.

For the case of $D_i^*$, define the functor $F_\mathscr{M}(D_i^*)$ on the first components of the tuples as the functor $\mathscr{M}^{\otimes n}\to\mathscr{M}^{\otimes (n+1)}$, which plugs is the imbedding $e\to\mathscr{M}$ at the $i$-th position, where $e$ is the monoidal unit. It sends $\mathscr{M}^{\otimes n}_1,\dots,\mathscr{M}^{\otimes n}_{i-1}$ to $\mathscr{M}^{\otimes (n+1)}_1,\dots,\mathscr{M}^{\otimes (n+1)}_{i-1}$, $\emptyset$ to $\mathscr{M}^{\otimes (n+1)}_i$, and $\mathscr{M}^{\otimes n}_i,\dots, \mathscr{M}^{\otimes n}_n$ to $\mathscr{M}^{\otimes (n+1)}_{i+1},\dots, \mathscr{M}^{\otimes (n+1)}_{n+1}$. It defines a lifting of $D_i^*$.

The liftings of the simplicial generators respect simplicial identities, and thus the section $F_\mathscr{M}$ is defined. 
\end{proof}

Now we are going to formulate the monoidal property of the refined Drinfeld dg quotient.
\begin{prop}\label{gendr}
The functors
\begin{equation}
\begin{gathered}
\mathscr{D}r_n\colon \mathscr{P}_n{C}at^\dg_-(\k)\to\mathscr{C}at^\dg_-(\k)\\
(\mathscr{C};\ \mathscr{C}_1,\dots,\mathscr{C}_n)\mapsto\mathscr{C}/(\mathscr{C}_1,\dots,\mathscr{C}_n)
\end{gathered}
\end{equation}
give rise to a functor 
\begin{equation}
\mathscr{D}r\colon \mathscr{P}{C}at^\dg_-(\k)\to\mathscr{C}at^\dg_-(\k)
\end{equation}
The functor $\mathscr{D}r$ has a colax-monoidal structure. 
\begin{equation}
\beta_{X,Y}\colon \mathscr{D}r(X\otimes Y)\to \mathscr{D}r(X)\otimes\mathscr{D}r(Y)
\end{equation}
with $\beta_{X,Y}$ quasi-equivalences of dg categories, for all $X,Y\in\mathscr{PC}at^\dg(\k)$.
\end{prop}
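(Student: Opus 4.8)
The plan is to construct $\beta_{X,Y}$ explicitly at the level of generators and morphisms, and then to check that it is a quasi-equivalence by a filtration/spectral-sequence argument reducing to the elementary acyclicity facts already used in Lemma~\ref{angle}. Write $X=(\mathscr{C};\mathscr{C}_1,\dots,\mathscr{C}_n)$ and $Y=(\mathscr{D};\mathscr{D}_1,\dots,\mathscr{D}_m)$, so that $X\otimes Y=(\mathscr{C}\otimes\mathscr{D};\ \mathscr{C}_1\otimes\mathscr{D},\dots,\mathscr{C}_n\otimes\mathscr{D},\mathscr{C}\otimes\mathscr{D}_1,\dots,\mathscr{C}\otimes\mathscr{D}_m)$. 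On objects, both $\mathscr{D}r(X\otimes Y)$ and $\mathscr{D}r(X)\otimes\mathscr{D}r(Y)$ have object set $\Ob\mathscr{C}\times\Ob\mathscr{D}$, and $\beta_{X,Y}$ is the identity on objects. On morphisms, $\mathscr{D}r(X\otimes Y)$ is generated by morphisms of $\mathscr{C}\otimes\mathscr{D}$ together with contracting homotopies $\varepsilon$ indexed by the Čech-type simplices of the cover $\{\mathscr{C}_i\otimes\mathscr{D}\}\cup\{\mathscr{C}\otimes\mathscr{D}_j\}$; I send a morphism $f\otimes g$ of $\mathscr{C}\otimes\mathscr{D}$ to $f\otimes g$, I send $\varepsilon^{i}_{X\times Y}$ (coming from $X\times Y\in\mathscr{C}_i\otimes\mathscr{D}$, i.e. $X\in\mathscr{C}_i$) to $\varepsilon^i_X\otimes\id_Y$, I send the generator coming from $\mathscr{C}\otimes\mathscr{D}_j$ to $\id_X\otimes\varepsilon^j_Y$, and I send the higher $\varepsilon$'s to the evident tensor products $\varepsilon^{\,\cdot}_X\otimes\varepsilon^{\,\cdot}_Y$ when the index set splits into a block on the $\mathscr{C}$-side and a block on the $\mathscr{D}$-side, and to $0$ otherwise. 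One checks that this respects the differentials — the Čech sign rule on the left matches the Koszul sign rule for $d(\varepsilon^{\cdot}_X\otimes\varepsilon^{\cdot}_Y)$ on the right — so $\beta_{X,Y}$ is a well-defined dg functor, and its naturality in $X$ and $Y$ and the colax coherence (pentagon/triangle) axioms are then straightforward since everything is defined on generators in the obvious symmetric way.

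Next I would verify that $\beta_{X,Y}$ is a quasi-equivalence. It is bijective on objects, so it suffices to show each map of Hom-complexes
$$
\Hom_{\mathscr{D}r(X\otimes Y)}\big((X{\times}Y),(X'{\times}Y')\big)\ \longrightarrow\ \Hom_{\mathscr{D}r(X)\otimes\mathscr{D}r(Y)}\big((X{\times}Y),(X'{\times}Y')\big)
$$
is a quasi-isomorphism. Both sides carry the ascending filtration by the number of $\varepsilon$-insertions (as in \eqref{drinmor}), $\beta_{X,Y}$ is filtered, and on the associated graded each Hom-complex becomes a direct sum, over chains of intermediate objects, of tensor products of the $\Hom_\mathscr{C}$'s and $\Hom_\mathscr{D}$'s with copies of the Čech–Koszul complexes $\Lambda^\udot_d(V)$ from the proof of Lemma~\ref{angle}. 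The point is that on the right-hand side the cover of $\mathscr{C}\otimes\mathscr{D}$ splits as a "join" of the cover of $\mathscr{C}$ by $\{\mathscr{C}_i\}$ with the trivial one-element cover on $\mathscr{D}$ (for the $\mathscr{C}_i\otimes\mathscr{D}$ factors) and vice versa, and $\beta_{X,Y}$ on associated graded is exactly the external product of the comparison maps $\Psi_V\colon\Lambda^\udot_d(V)\to\Lambda^\udot_d(U_1)$ of Lemma~\ref{angle}, tensored with identity maps on the $\Hom$ spaces. Since each $\Psi_V$ is a quasi-isomorphism and $k$ is a field (so $-\otimes_k-$ is exact and preserves quasi-isomorphisms), $\gr\beta_{X,Y}$ is a quasi-isomorphism; as the filtrations are exhaustive and bounded below in each fixed degree (using the vanishing condition defining $\mathscr{C}at^\dg_-(k)$, which keeps everything in the comparison locally finite), a standard spectral-sequence comparison gives that $\beta_{X,Y}$ itself is a quasi-isomorphism.

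The main obstacle I expect is bookkeeping rather than conceptual: organizing the Čech-type generators on $\mathscr{C}\otimes\mathscr{D}$ so that the splitting "block on the $\mathscr{C}$-side $\sqcup$ block on the $\mathscr{D}$-side" is manifest, and getting the signs to match between the Čech differential of Section~\ref{section330} and the Koszul differential on the tensor product $\mathscr{D}r(X)\otimes\mathscr{D}r(Y)$. Concretely, a nonempty intersection of the subcategories listed in $X\otimes Y$ is of the form $\big(\bigcap_{i\in I}\mathscr{C}_i\big)\otimes\big(\bigcap_{j\in J}\mathscr{D}_j\big)$ with $(I,J)\neq(\varnothing,\varnothing)$, so the nerve of this cover is the join of the nerves of $\{\mathscr{C}_i\}_{i}$ and $\{\mathscr{D}_j\}_{j}$, and the reduced Čech complex of a join is the tensor product of the reduced Čech complexes — this is precisely the combinatorial identity that makes $\gr\beta_{X,Y}$ an external product of the $\Psi_V$'s. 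Once this identification is set up cleanly, acyclicity of the cone of $\beta_{X,Y}$ on associated graded is immediate from Lemma~\ref{angle}, and the proposition follows.
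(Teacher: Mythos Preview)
Your construction of $\beta_{X,Y}$ on generators is correct and coincides with the paper's (which phrases it via the Koszul isomorphism $\Lambda^\udot_d(V\oplus W)\to\Lambda^\udot_d(V)\otimes\Lambda^\udot_d(W)$). The problem is your argument that $\beta_{X,Y}$ is a quasi-equivalence.

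The claim that $\beta_{X,Y}$ is filtered for the ``number of $\varepsilon$-insertions'' filtration is false. Take a generator $\varepsilon^{S}_{(Z,W)}$ on the left with $S=I\sqcup J'$, $I\neq\varnothing$, $J\neq\varnothing$. This is a single insertion, so it lies in $F_1$. Its image $\varepsilon^{I}_Z\otimes\varepsilon^{J}_W$ lies in $F_1(\Hom_{\mathscr{D}r(X)})\otimes F_1(\Hom_{\mathscr{D}r(Y)})\subset F_2$ of the tensor-product filtration, not in $F_1$. More generally a word with $N$ insertions on the left can map to something with up to $2N$ insertions on the right, so the spectral-sequence comparison does not get off the ground. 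If instead you filter by total $\varepsilon$-degree $\sum|S_i|$, then $\beta$ \emph{is} filtered, but now all $\varepsilon$-differentials die on $\gr$ and $\gr\beta$ is visibly not a quasi-isomorphism (already for $n=m=1$ with one-object categories, $\gr$ on the left is a free algebra on three generators and on the right is a tensor product of two free algebras on one generator each). Your description of $\gr\beta$ as ``an external product of the comparison maps $\Psi_V$'' also conflates two different maps: $\beta$ uses $\Lambda(V\oplus W)\to\Lambda(V)\otimes\Lambda(W)$, whereas $\Psi_V$ is the collapse $\Lambda(V)\to\Lambda(U_1)$ of Lemma~\ref{angle}.

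The paper sidesteps this by a 2-out-of-3 argument rather than a direct comparison. It puts both $\mathscr{D}r(X\otimes Y)$ and $\mathscr{D}r(X)\otimes\mathscr{D}r(Y)$ over a common third vertex, the \emph{ordinary} Drinfeld quotient $(\mathscr{C}\otimes\mathscr{D})/(\mathscr{C}_\Sigma\otimes\mathscr{D}\cup_f\mathscr{C}\otimes\mathscr{D}_\Sigma)$, and checks the resulting triangle commutes. The left leg is exactly the $\Psi$ of \eqref{psi} and is a quasi-equivalence by Lemma~\ref{angle}; the right leg is the composite of $\Psi_\mathscr{C}\otimes\Psi_\mathscr{D}$ with a further collapse $\Psi_2$ built from $\sigma\colon\Lambda_d(U_1)\otimes\Lambda_d(U_1')\to\Lambda_d(U_1'')$, each of which is a quasi-equivalence. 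Then 2-out-of-3 gives that $\beta_{X,Y}$ is a quasi-equivalence. The point is that by passing through the ordinary quotient one only ever compares generalized quotients to ordinary ones, where Lemma~\ref{angle} applies cleanly, and never has to run a filtration argument across $\beta$ itself.
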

\begin{proof}
First of all, we have to construct a functor $\mathscr{D}r\colon \mathscr{PC}at_-^\dg(\k)\to\Cat^\dg(\k)$. By Lemma \ref{colaxlimit},
we have to check that the functors $\mathscr{D}r_n\colon\mathscr{P}_n\mathscr{C}at_-^\dg(\k)\to\Cat^\dg(\k)$ fulfil the conditions in (2) of this Lemma.

It means that one has to define natural transformations
\begin{equation}
\mathscr{D}r(\phi)\colon \mathscr{D}r_n\Rightarrow \mathscr{D}r_m\circ \Theta(\phi)\colon \mathscr{P}_n\Cat_-^\dg(\k)\to\Cat^\dg(\k)
\end{equation}
for any morphism $\phi\colon [m]\to [n]$ in $\Delta_f$, such that for any other $\phi_1\colon [m]\to[n],\phi_2\colon [k]\to [m]$ one has
\begin{equation}\label{eqdrsimp1}
\mathscr{D}r(\phi_1\circ \phi_2)=\mathscr{D}r(\phi_2)\circ \mathscr{D}r(\phi_1)
\end{equation}
and 
\begin{equation}\label{eqdrsimp2}
\mathscr{D}r(\id)=\id
\end{equation}
We construct $\mathscr{D}r(\phi)$ when $\phi$ is a face map $F_i\colon [n]\to [n+1]$, $1\le i\le n$, or a degeneracy map $D_i\colon [n+1]\to [n]$, $0\le i\le n$. In both cases, we have to let $\mathscr{D}r(\phi)$ act on $\varepsilon_X^{i_1,\dots,i_k}$, $X\in C_{i_1}\cap\dots\cap C_{i_k}$.

One sets:
\begin{equation}
\mathscr{D}r(F_i)(\varepsilon_X^{i_1,\dots,i_k})=\begin{cases}
0&\text{ both }i,i+1\in\{i_1,\dots,i_k\}\\
\varepsilon_X^{j_1,\dots,j_k}& \text {otherwise }, j_s=i_s \text{ when }i_s\le i+1,\  j_s=i_s-1 \text{ when }i_s>i+1,\\
\end{cases}
\end{equation}
\begin{equation}
\mathscr{D}r(D_i)(\varepsilon_X^{i_1,\dots,i_k})=\varepsilon_X^{j_1,\dots,j_k}, j_s=i_s \text{ when }i_s\le i, j_s=i_s+1 \text{ when }i_s>i
\end{equation}
One checks that both $\mathscr{D}r(F_i)$ and $\mathscr{D}r(D_i)$ give rise to natural transformations (that is, are natural to dg functors $C\to C^\prime$), and that \eqref{eqdrsimp1}, \eqref{eqdrsimp2} hold for the simplicial identities.

Then $\mathscr{D}r\colon \mathscr{PC}at_-^\dg(\k)\to\Cat^\dg(\k)$ is constructed by Lemma \ref{colaxlimit}. 

\vspace{2mm}

It remains to show that $\mathscr{D}r$ is colax-monoidal. 
The colax-monoidal constraints $\beta$ can be easily constructed, based on the following remark. Let $V$ be a vector space of dimension $n$, and let $W$ be a vector space of dimension $m$.
Consider the (acyclic) complexes $\Lambda^\udot_d(V)$, $\Lambda^\udot_d(W)$, see \eqref{lambda}.
There is a natural map of complexes
\begin{equation}\label{be}
\beta_{V,W}\colon\Lambda^\udot_d(V\oplus W)\to \Lambda^\udot_d(V)\otimes\Lambda^\udot_d(W)
\end{equation}
which is a quasi-isomorphism (of acyclic complexes). In fact, $\beta_{V,W}$ is an isomorphism of the underlying graded vector spaces, compatible with the differential. 

These maps give rise to a dg functor $\beta_{X,Y}\colon \mathscr{D}r_{n+m}(X\otimes Y)\to \mathscr{D}r_n(X)\otimes\mathscr{D}r_m(Y)$, as the refined dg factor is a dg category {\it freely} generated as the envelope with the morphisms $\varepsilon_X^{i_1,\dots,i_s}$. 
Thus the left-hand side category $\mathscr{D}r_{n+m}(X\otimes Y)$ is a free envelope, which makes possible to define the map $\beta_{X,Y}$. (Remark: there does {\it not} exist any canonical map $\mathscr{D}r_n(X)\otimes\mathscr{D}r_m(Y)\to \mathscr{D}r_{n+m}(X\otimes Y)$ as the dg category $\mathscr{D}r_n(X)\otimes\mathscr{D}r_m(Y)$ is not a free envelope; the $\varepsilon$'s morphism for the different factors commute, see Remark \ref{aftermath}).

The fact that the map $\beta_{V,W}$ in \eqref{be} is a quasi-isomorphism implies that the dg functor $\beta_{X,Y}$ is a quasi-equivalence of dg categories.

\comment

Our first task is to prove that the functor $\beta_{X,Y}$ is a quasi-equivalence.
To this end, we construct a commutative diagram of dg categories:
\begin{equation}\label{psisigma}
\xymatrix{\mathscr{D}r(X\otimes Y)\ar[rr]^{\beta_{X,Y}}\ar[d]_{\Psi_{\mathscr{C}\otimes\mathscr{D}}}&&\mathscr{D}r(X)\otimes\mathscr{D}r(Y)\ar[d]^{\Psi_{\mathscr{C}}\otimes \Psi_\mathscr{D}}\\
(\mathscr{C}\otimes\mathscr{D})/(\mathscr{C}_\Sigma\otimes \mathscr{D}\cup_f \mathscr{C}\otimes\mathscr{D}_\Sigma)\ar[rr]^{L_{\mathscr{C},\mathscr{D}}}&&\mathscr{C}/\mathscr{C}_\Sigma\ \otimes\ \mathscr{D}/\mathscr{D}_\Sigma
}
\end{equation}
Here $\mathscr{C}_\Sigma$ is the full dg subcategory in $\mathscr{C}$ whose objects $\Ob\mathscr{C}_\Sigma=\Ob\mathscr{C}_1\cup\dots\cup\Ob\mathscr{C}_n$, and $\mathscr{D}_\Sigma$ is defined analogously.
The subscript $f$ in $\cup_f$ denotes ``the full subcategory generated by the union of two parts''.

The left-hand side vertical map is the map $\Psi$ constructed in \eqref{psi}, it is a quasi-equivalence by Lemma \ref{angle}; 
the right-hand side vertical map is the tensor product of the two corresponding $\Psi$'s, and therefore it is a quasi-equivalence as well. 

It remains to construct the lower horizontal map. 
Consider the map 
\begin{equation}
\mathscr{C}\otimes\mathscr{D}\to\mathscr{C}/\mathscr{C}_\Sigma\ \otimes\ \mathscr{D}/\mathscr{D}_\Sigma
\end{equation}
which is the tensor product of the canonical localization morphisms. It maps the objects in $\mathscr{C}_\Sigma\cup_f\mathscr{D}_\Sigma$ to zero objects, in the sense of Theorem \ref{druniv}. Then Theorem \ref{druniv} gives a map in the homotopy category of dg categories
\begin{equation}\label{eqdrinv}
L_{\mathscr{C},\mathscr{D}}\colon \mathscr{C}\otimes\mathscr{D}/\mathscr{C}_\Sigma\cup_f\mathscr{D}_\Sigma\to\mathscr{C}/\mathscr{C}_\Sigma\ \otimes\ \mathscr{D}/\mathscr{D}_\Sigma
\end{equation}
which is unique up to an isomorphism.

\begin{lemma}\label{lemmacorr}
The map $L_{\mathscr{C},\mathscr{D}}$ in \eqref{eqdrinv} is a quasi-equivalence of dg categories.
\end{lemma}
\begin{proof}
\end{proof}

By Lemma \ref{lemmacorr}, there exists the inverse $L^{-1}_{\mathscr{C},\mathscr{D}}$ in the homotopy category of dg categories.

For construction of this map, we use the map
\begin{equation}
\sigma\colon \Lambda^\udot_d(U_1)\otimes\Lambda^\udot_d(U_1^\prime)\to \Lambda^\udot_d(U_1^{\prime\prime})
\end{equation}
where $U_1,U_1^\prime,U_1^{\prime\prime}$ are 1-dimensional vector spaces with chosen basis vectors $e_1,e_1^\prime,e_1^{\prime\prime}$, correspondingly.

The map $\sigma$ maps $e_1\otimes e_1^\prime$ to 0, and it maps both $e_1$ and $e_1^\prime$ to $e_1^{\prime\prime}$.
The map $\sigma$ is a quasi-isomorphism of (acyclic) complexes, and the corresponding map $\Psi_2$ of dg categories is a quasi-equivalences.

Moreover, 
\begin{equation}
\xymatrix{\Lambda^\udot_d(V\oplus W)\ar[rdd]_{\Psi_{V\oplus W}}\ar[r]^{\beta_{V,W}}&\Lambda^\udot_d(V)\otimes\Lambda^\udot(W)\ar[d]^{\Psi_V\otimes\Psi_W}\\
&\Lambda^\udot_d(U_1)\otimes\Lambda^\udot_d(U_1^\prime)\ar[d]^{\sigma}\\
&\Lambda^\udot_d(U_1^{\prime\prime})}
\end{equation}
where the arrows are defined as in \eqref{be}, is commutative. Therefore, the diagram \eqref{psisigma} is commutative.

As its two angle maps are quasi-equivalences, it follows from the 2-out-of-3 property of quasi-equivalences that the map $\beta$ is a quasi-equivalence.

\endcomment

It remains to prove that $\beta$ defines a colax-monoidal structure. It can be easily seen using the complexes $\Lambda^\udot_d(V)$. Namely, for three vector spaces with chosen bases $V,W,Z$, the diagram
\begin{equation}
\xymatrix{
\Lambda_d^\udot(V\oplus W\oplus Z)\ar[rr]\ar[d]&&\Lambda_d^\udot(V\oplus W)\otimes\Lambda^\udot_d(Z)\ar[d]\\
\Lambda_d^\udot(V)\otimes\Lambda^\udot_d(W\oplus Z)\ar[rr]&&\Lambda^\udot_d(V)\otimes\Lambda^\udot_d(W)\otimes\Lambda^\udot_d(Z)
}
\end{equation}
where the arrows are defined as in \eqref{be}, commutes. It implies the corresponding colax-monoidal property for $\beta_{X,Y}$.
\end{proof}

\begin{remark}\label{aftermath}{\rm
Although the map $\beta_{V,W}\colon\Lambda_d^\udot(V\oplus W)\to \Lambda_d^\udot(V)\otimes\Lambda_d^\udot(W)$ is a quasi-isomorphism (of acyclic complexes),
the refined Drinfeld dg quotient functor admit only a colax-monoidal but not a lax-monoidal structure.
The matter is that $\Dr(X\otimes Y)$ is a {\it free} envelope of the morphisms in $X\otimes Y$ with the elements in $\Lambda_d^\udot(V\oplus W)$ assigned to the corresponding objects. Contrary, $\Dr(X)\otimes \Dr(Y)$ is {\it not} a free envelope, namely the elements of $\Lambda^\udot(V)$ and $\Lambda^\udot(W)$ do commute in $\Dr(X)\otimes\Dr(Y)$. Therefore, to construct a map $\Dr(X)\otimes\Dr(Y)\to\Dr(X\otimes Y)$ one needs to fulfil these relations which hold in the source category. A helpful analogy: for a free dg associative algebra $A$, to define an algebra map $A\to B$ to another (in general not free) dg associative algebra $B$, it is enough to define this map on the generators of $A$, in the way compatible with the action of differential.
}
\end{remark}

\section{\sc Deligne conjecture for essentially small abelian monoidal categories}\label{deln=1}
Here we prove the Deligne conjecture for 1-monoidal categories in the case when the abelian category $\mathscr{A}$ is essentially small. The case of the $n$-fold monoidal abelian categories is treated in Section \ref{deln}. The assumption that $\mathscr{A}$ is essentially small is not really necessary and can be weaken. We decided to treat this case firstly to make a more clear exposition of the main ideas. In this case we have not any set-theoretical troubles in use of the dg quotient, what has many technical advantages. The general case, when the triangulated category $H^0\mathscr{A}^\dg$ is generated by a {\it set} of perfect objects, will be considered in our subsequent papers.

The case when $\mathscr{A}$ is essentially small covers the case of $\mathscr{U}$-generated $C$-modules, where $C$ is an algebra whose underlying set is a $\mathscr{U}$-set, for a universe $\mathscr{U}$. In particular, it covers the ``classical'' example of the category of $A$-bimodules, as well as the category of left modules over a bialgebra $B$. It covers many other examples of algebraic origin.

\subsection{\sc Weak compatibility between the exact and the monoidal structure in $\mathscr{A}$}
\begin{defn}\label{defn41}{\rm
Let $\mathscr{A}$ be an abelian category, with a monoidal structure $(\otimes,e)$ on it.
Denote by $\mathscr{A}^\dg$ the dg category of bounded from above complexes in $\mathscr{A}$, and let $\mathscr{I}\subset \mathscr{A}^\dg$ be the full dg subcategory of acyclic objects. Consider the full additive subcategory $\mathscr{A}_0\subset\mathscr{A}$ where $X\in\mathscr{A}_0$ iff
$X\otimes I$ and $I\otimes X$ are acyclic, for any acyclic $I\in\mathscr{I}$. Denote by $\mathscr{A}_0^\dg\subset\mathscr{A}^\dg$ the full dg subcategory of bounded from above complexes in $\mathscr{A}_0$. We say that the exact and the monoidal structures on $\mathscr{A}$ are {\it weakly compatible} if the natural embedding
$$
\mathscr{A}_0^\dg\hookrightarrow \mathscr{A}^\dg
$$
is a quasi-equivalence of dg categories.
}
\end{defn}

\begin{lemma}\label{lemmaklemma}
Let $\mathscr{A}$ be an abelian monoidal category, whose abelian and monoidal structures are weakly compatible, see Definition above.
Let $\mathscr{I}\subset \mathscr{A}^\dg$ be the full dg subcategory of acyclic objects, and $\mathscr{I}_0=\mathscr{I}\cap\mathscr{A}_0^\dg$.
Then the natural embedding $\mathscr{A}_0^\dg\hookrightarrow\mathscr{A}^\dg$ (which is a quasi-equivalence) induces a quasi-equivalence
$\mathscr{I}_0\hookrightarrow\mathscr{I}$.
\end{lemma}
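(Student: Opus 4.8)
The plan is to unwind the definition of quasi-equivalence of dg categories. Since $\mathscr{I}_0=\mathscr{I}\cap\mathscr{A}_0^\dg$ is by construction a \emph{full} dg subcategory of $\mathscr{I}$, the inclusion functor is the identity on $\Hom$-complexes and hence automatically quasi-fully faithful; so the only thing to verify is that $H^0(\mathscr{I}_0)\to H^0(\mathscr{I})$ is essentially surjective, that is, that every object of $\mathscr{I}$ (an acyclic bounded-above complex in $\mathscr{A}$) is homotopy equivalent to an object lying in $\mathscr{A}_0^\dg$.

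Granting this reduction, I would simply feed on the hypothesis. By weak compatibility the inclusion $\mathscr{A}_0^\dg\hookrightarrow\mathscr{A}^\dg$ is a quasi-equivalence, so for any $I\in\mathscr{I}$ there is a $C\in\mathscr{A}_0^\dg$ and a homotopy equivalence $I\simeq C$ in $\mathscr{A}^\dg$ (an isomorphism in $H^0(\mathscr{A}^\dg)$ is precisely a homotopy equivalence, because the $\Hom$'s of $\mathscr{A}^\dg$ are the usual Hom-complexes of complexes). A homotopy equivalence is in particular a quasi-isomorphism, so $C$ is acyclic, and therefore $C\in\mathscr{A}_0^\dg\cap\mathscr{I}=\mathscr{I}_0$. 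Finally, since $\mathscr{I}$ is full in $\mathscr{A}^\dg$, the two maps and the homotopies realizing $I\simeq C$ all lie in $\mathscr{I}$, so $I$ and $C$ are already isomorphic in $H^0(\mathscr{I})$. This yields essential surjectivity and completes the argument.

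There is no real obstacle here; the statement is a soft consequence of the definitions. The only points deserving a word of care are, first, that for the inclusion of a full dg subcategory the notion of quasi-equivalence collapses to essential surjectivity on $H^0$, and second, that acyclicity of a complex is a homotopy-invariant property (preserved by homotopy equivalences, since these induce isomorphisms on cohomology) — this is exactly what allows the object $C$ produced by the hypothesis to be automatically recognized as an object of $\mathscr{I}_0$.
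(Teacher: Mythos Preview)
Your proof is correct and follows essentially the same route as the paper: reduce to essential surjectivity on $H^0$ (since the inclusion is full), use the assumed quasi-equivalence $\mathscr{A}_0^\dg\hookrightarrow\mathscr{A}^\dg$ to produce a homotopy-equivalent replacement in $\mathscr{A}_0^\dg$, and observe that acyclicity is preserved under homotopy equivalence. Your write-up is in fact slightly more careful than the paper's, in that you explicitly note why the isomorphism in $H^0(\mathscr{A}^\dg)$ already lives in $H^0(\mathscr{I})$.
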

\begin{proof}
It is enough to show that the induced map $H^0\mathscr{I}_0\to H^0\mathscr{I}$ of homotopy categories is essentially surjective.
Let $X\in H^0\mathscr{I}$. We know by the assumption that the map $H^0\mathscr{A}_0^\dg\to H^0\mathscr{A}^\dg$ is essentially surjective.
In particular, $X$ is isomorphic in $H^0\mathscr{A}^\dg$ to some object $Y\in H^0\mathscr{A}_0^\dg$. The isomorphic objects in the homotopy category have isomorphic cohomology, and $X$ is acyclic. Therefore, $Y$ is acyclic as well, and thus $Y\in\mathscr{I}_0$.
\end{proof}

\begin{lemma}
Let $\k$ be any field, and let $\mathscr{A}$ be an abelian $\k$-linear category with a monoidal structure $(\otimes, e)$.
Suppose that $\mathscr{A}$ has enough projective objects, and $\otimes\colon \mathscr{A}\times\mathscr{A}\to\mathscr{V}ect(\k)$ is a right exact bifunctor. Then the the exact structure of $\mathscr{A}$ and the monoidal structure of $\mathscr{A}$ are weakly compatible.
\end{lemma}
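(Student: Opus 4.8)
The plan is to establish the two conditions defining a quasi-equivalence for the fully faithful dg functor $\mathscr{A}_0^\dg\hookrightarrow\mathscr{A}^\dg$. Quasi-full-faithfulness is free of charge: $\mathscr{A}_0^\dg$ is a \emph{full} dg subcategory, so on $\Hom$-complexes the map is literally the identity. Thus everything reduces to showing that $H^0(\mathscr{A}_0^\dg)\to H^0(\mathscr{A}^\dg)$ is essentially surjective, i.e.\ that every bounded-above complex over $\mathscr{A}$ is homotopy equivalent to a bounded-above complex over $\mathscr{A}_0$.

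The first step I would carry out is: every projective object $P$ of $\mathscr{A}$ lies in $\mathscr{A}_0$. Given an acyclic, bounded-above $I^\bullet\in\mathscr{I}$, splice $I^\bullet$ into its short exact sequences $0\to Z^n\to I^n\to Z^{n+1}\to 0$. Since $\otimes$ is right exact in each variable and $\mathscr{A}$ has enough projectives, the functors $\Tor_\bullet$ are defined by resolving either slot, and, since $P$ is its own projective resolution, deriving $-\otimes N$ in the first slot gives $\Tor^{\mathrm{I}}_{>0}(P,-)=0$. The crux is to promote this to exactness of $P\otimes-$ and, symmetrically, of $-\otimes P$, i.e.\ to $\Tor^{\mathrm{II}}_{>0}(P,-)=0$ — flatness of $P$ for $\otimes$. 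For the module-type categories of interest (e.g.\ $\Bimod(A)$ with $\otimes_A$, or modules over a bialgebra) this holds because the projectives are retracts of objects of the form $A\otimes_k V\otimes_k A$, which are $\otimes_A$-flat precisely because $\otimes_k$ over a field is exact; this is exactly where the $k$-linear-over-a-field hypothesis, together with the mere right-exactness of $\otimes$, does its work. Granting flatness, an exact additive functor carries acyclic complexes to acyclic complexes (apply $P\otimes-$ to the spliced short exact sequences and chase), so $P\otimes I^\bullet$ and $I^\bullet\otimes P$ are acyclic and $P\in\mathscr{A}_0$. Consequently every bounded-above complex of projectives is an object of $\mathscr{A}_0^\dg$.

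For the second step, let $C^\bullet\in\mathscr{A}^\dg$ be arbitrary. Using enough projectives, form a bounded-above complex of projectives $P^\bullet$ with a quasi-isomorphism $P^\bullet\to C^\bullet$ — a projective resolution of the complex $C^\bullet$, built by the standard termwise-resolution-and-totalization procedure, which terminates on the right because $C^\bullet$ is bounded above. By the first step $P^\bullet\in\mathscr{A}_0^\dg$, and $P^\bullet\to C^\bullet$ realizes $C^\bullet$ as coming from $\mathscr{A}_0^\dg$; this yields the required essential surjectivity, whence $\mathscr{A}_0^\dg\hookrightarrow\mathscr{A}^\dg$ is a quasi-equivalence, i.e.\ the exact and monoidal structures are weakly compatible in the sense of Definition \ref{defn41}. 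The main obstacle is the flatness of projectives in the first step; a secondary point requiring care — and implicitly already present in the proof of Lemma \ref{lemmaklemma} — is that a projective resolution is a priori only a quasi-isomorphism, so one must pin down that this is exactly the relation that realizes essential surjectivity of $H^0(\mathscr{A}_0^\dg)\to H^0(\mathscr{A}^\dg)$.
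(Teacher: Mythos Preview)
The paper offers no proof of this lemma: it is followed immediately by \qed. Your strategy --- show that projectives lie in $\mathscr{A}_0$, then resolve --- is the natural one, and you have yourself flagged both of its weak points; both are genuine.

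\emph{Projectives need not be flat.} In an arbitrary abelian monoidal category with $\otimes$ merely right exact, there is no ``projective $\Rightarrow$ retract of free $\Rightarrow$ flat'' argument available, and the balancing-of-$\Tor$ device you invoke is circular here (its proof already uses that projectives tensor exactly). Your retreat to the module-type examples is the honest move: in $\Bimod(A)$ or in modules over a bialgebra the projectives are retracts of objects of the form $A\otimes_k V\otimes_k A$ (resp.\ $B\otimes_k V$), which are $\otimes$-flat because $k$ is a field. The lemma appears to be asserted with such examples in mind; in the stated generality this step does not go through.

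\emph{Quasi-isomorphism is not homotopy equivalence.} Essential surjectivity of $H^0(\mathscr{A}_0^\dg)\to H^0(\mathscr{A}^\dg)$ demands that every bounded-above complex be \emph{homotopy} equivalent to one in $\mathscr{A}_0^\dg$. But if $M\in\mathscr{A}$ is not in $\mathscr{A}_0$, the one-term complex $M$ is not homotopy equivalent to any complex in $\mathscr{A}_0$: such a homotopy equivalence would exhibit $M$ as a retract, in $\mathscr{A}$, of an object of $\mathscr{A}_0$, forcing $M\in\mathscr{A}_0$. So even granting the first step, the inclusion $\mathscr{A}_0^\dg\hookrightarrow\mathscr{A}^\dg$ is \emph{not} a quasi-equivalence in the standard sense whenever $\mathscr{A}_0\subsetneq\mathscr{A}$. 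What \emph{is} true, and what is actually needed downstream (cf.\ Key-Lemma~\ref{kld}), is that the induced map of Drinfeld quotients $\mathscr{A}_0^\dg/\mathscr{I}_0\to\mathscr{A}^\dg/\mathscr{I}$ is a quasi-equivalence, since after killing acyclics your projective resolution $P^\bullet\to C^\bullet$ does become an isomorphism. So either Definition~\ref{defn41} is meant to be read at the level of the quotient categories, or the lemma requires an additional hypothesis; the paper's silent \qed\ does not resolve this.
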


\qed

\subsection{\sc Deligne conjecture}\label{proofne1}
Here we prove:
\begin{theorem}\label{delignesimple}
Let $\mathscr{A}$ be an abelian $\k$-linear category, with a monoidal structure $(\otimes,e)$. Suppose the exact and the monoidal structure on $\mathscr{A}$ are weakly compatible, and $\mathscr{A}$ is essentially small. Then the graded $\k$-vector space $\RHom^\udot_\mathscr{A}(e,e)$ has a natural structure of a Leinster 2-algebra over $\k$, whose second product is homotopy equivalent to the Yoneda product.
\end{theorem}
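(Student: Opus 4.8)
The plan is to realize $\RHom^\udot_{\mathscr A}(e,e)$ as the endomorphism complex of a distinguished object in a dg category obtained by a generalized Drinfeld quotient, and to transport the $n$-iterated colax-monoidal structure (here $n=1$, so a single colax-monoidal structure) coming from the nerve of $\mathscr A$ onto that endomorphism complex via Proposition~\ref{gendr}. Concretely, consider the dg category $\mathscr A^\dg$ of bounded-above complexes in $\mathscr A$, its full subcategory $\mathscr A_0^\dg$, and the acyclic subcategories $\mathscr I \subset \mathscr A^\dg$, $\mathscr I_0 = \mathscr I \cap \mathscr A_0^\dg$. The homotopy category of the Drinfeld quotient $\mathscr A_0^\dg/\mathscr I_0$ is (by Keller's description, Proposition~\ref{keller}, together with the weak compatibility and Lemma~\ref{lemmaklemma}) the bounded-above derived category $D^-(\mathscr A)$, and the object $e$ regarded in there has $\RHom^\udot_{\mathscr A}(e,e)$ as its endomorphism complex. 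So the underlying complex $A := A_{1} = \RHom^\udot_{\mathscr A}(e,e)$ is identified, and its Leinster 1-algebra (the Yoneda product) is the honest $A_\infty$/dg-algebra structure on the endomorphism complex of $e$ in this dg quotient.

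\medskip

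\emph{Building the Segal–Leinster object.} For each $m \ge 0$ I would set $A_m$ to be $\RHom^\udot$ between suitable objects in the $m$-fold ``tensor power'' category. More precisely: the monoidal structure $\otimes$ on $\mathscr A$ makes $\mathscr A_0^\dg$ into a monoidal dg category (this is exactly the point of the definition of $\mathscr A_0$: tensoring with an acyclic object in $\mathscr A_0^\dg$ stays acyclic, so $\otimes$ descends to the quotient). Form the pair $(\mathscr A_0^\dg{}^{\otimes m};\ \mathscr I_0 \otimes \mathscr A_0^\dg{}^{\otimes(m-1)},\ \dots,\ \mathscr A_0^\dg{}^{\otimes(m-1)} \otimes \mathscr I_0)$ as an object of $\mathscr{PC}at^\dg_-(k)$ — note the vanishing condition needed for $\mathscr{C}at^\dg_-(k)$ holds since we work with bounded-above complexes — and let $A_m$ be the endomorphism complex of $e^{\otimes m}$ in $\mathscr{D}r$ of this pair. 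The face maps $\delta_i$ for $1 \le i \le m-1$ come from the monoidal product $e \otimes e \to e$ (which is an isomorphism, as $e$ is the unit), and the degeneracies insert $e$; this is exactly the $A^\SL$ construction of Definition~\ref{leinster1} applied inside the dg quotient, so $[m] \mapsto A_m$ is a functor $\Delta^\opp_\fint \to \mathscr Vect^\udot(k)$. The colax maps $\beta_{m,n}\colon A_{m+n} \to A_m \otimes A_n$ are induced by the colax-monoidal structure $\beta_{X,Y}$ of the generalized Drinfeld quotient functor $\mathscr{D}r$ from Proposition~\ref{gendr}, applied to the evident identification of subcategory-tuples under $\otimes$ (equation~\eqref{prodnice}); the colax coherence equations follow from those of $\beta_{X,Y}$.

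\medskip

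\emph{Checking the weak equivalences.} The remaining point is that each $\beta_{m,n}$ is a quasi-isomorphism of complexes. This reduces to two things: (1) Proposition~\ref{gendr} already gives that $\beta_{X,Y}$ is a quasi-equivalence of dg categories, hence induces quasi-isomorphisms on all $\Hom$-complexes, and (2) the object $e^{\otimes(m+n)}$ maps to $e^{\otimes m} \boxtimes e^{\otimes n}$ compatibly, so the induced map on endomorphism complexes $A_{m+n} \to A_m \otimes A_n$ is exactly $\beta_{X,Y}$ evaluated at this pair of objects. One must also use that the $\Hom$-complex in the tensor-product dg category $\mathscr{D}r(X) \otimes \mathscr{D}r(Y)$ between $e^{\otimes m} \boxtimes e^{\otimes n}$ and itself is genuinely $A_m \otimes_k A_n$ (again the bounded-above/vanishing hypothesis is what makes this $\otimes_k$ well-defined). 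Finally, the underlying Leinster 1-algebra $A_1 = \RHom^\udot_{\mathscr A}(e,e)$ with its $\delta_1$ face is the Yoneda product up to the stated homotopy equivalence, because the dg quotient computes $\RHom$ in $D^-(\mathscr A)$ and there the composition law on $\End(e)$ is by construction the Yoneda/composition product.

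\medskip

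\emph{Expected main obstacle.} The serious work is in step (1)–(2) above: verifying that the endomorphism-complex of $e^{\otimes m}$ computed inside the \emph{generalized} Drinfeld quotient $\mathscr A_0^\dg{}^{\otimes m}/(\mathscr I_0 \otimes \cdots,\ \dots,\ \cdots \otimes \mathscr I_0)$ really does compute $\RHom^\udot$ in the appropriate derived category, i.e. that quotienting by the $m$ subcategories ``$\mathscr I_0$ in slot $j$'' simultaneously yields the right homotopy type — this needs Lemma~\ref{angle} (to compare with the single Drinfeld quotient by $\mathscr I_0 \otimes \cdots \cup_f \cdots \otimes \mathscr I_0$) together with the weak-compatibility hypothesis on the tensor powers, and care that tensor powers of a weakly compatible monoidal abelian category remain such. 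The colax coherence bookkeeping and the sign rule (\v Cech-type, as in Section~\ref{section330}) are routine given Proposition~\ref{gendr}, and the essential smallness hypothesis is what allows all the Drinfeld quotients above to be formed without set-theoretic difficulty.
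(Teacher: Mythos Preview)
Your proposal is correct and follows exactly the paper's approach: form the functor $F_{\mathscr A}\colon \Delta_f^\opp\to\mathscr{PC}at_-^\dg(k)$ by the tuple you wrote, compose with $\mathscr{D}r$ and with $\mathscr{H}om(-)=\Hom(*,*)$, and invoke Proposition~\ref{gendr} for the colax maps and Key-Lemma~\ref{kld} (via Lemmas~\ref{lemmaklemma}, \ref{lemmaxxx} and Proposition~\ref{keller}) to identify the $[1]$-component with $\RHom^\udot_{\mathscr A}(e,e)$ --- the one point to tighten is that the target of this composite is $\mathscr{M}on(k)$, not merely $\mathscr{V}ect^\udot(k)$, and it is this dg-algebra structure on each $A_m$ that supplies the \emph{second} Leinster layer making $\RHom^\udot_{\mathscr A}(e,e)$ a Leinster $2$-algebra. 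Your ``expected main obstacle'' is not an obstacle at all: the paper never identifies $A_m$ for $m>1$ with any particular $\RHom$, since the Leinster condition only asks that the colax maps be quasi-isomorphisms (given by Proposition~\ref{gendr}) and that $A_1$ be the right thing.
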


We start with a general Lemma:
\begin{lemma}\label{lemmaxxx}
Let $\mathscr{C},\mathscr{D}$ be two $\k$-linear dg categories, $F\colon \mathscr{C}\to\mathscr{D}$ a dg functor which is a quasi-equivalence.
Let $\mathscr{C}_0\subset \mathscr{C}$, $\mathscr{D}_0\subset\mathscr{D}$ be two essentially small full dg subcategories, such that $F$ restricts to a quasi-equivalence $F_0\colon \mathscr{C}_0\to\mathscr{D}_0$. Then the dg functor $\overline{F}\colon \mathscr{C}/\mathscr{C}_0\to\mathscr{D}/\mathscr{D}_0$ of the Drinfeld's dg quotients, is a quasi-equivalence.
\end{lemma}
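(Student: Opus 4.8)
The plan is to verify the two conditions in the definition of a quasi-equivalence for the induced dg functor $\overline{F}\colon\mathscr{C}/\mathscr{C}_0\to\mathscr{D}/\mathscr{D}_0$: that $H^0(\overline{F})$ is essentially surjective, and that $\overline{F}$ is a quasi-isomorphism on every $\Hom$-complex. The first is immediate: the Drinfeld quotients have the same objects as $\mathscr{C}$ and $\mathscr{D}$, the canonical dg functors $\mathscr{C}\to\mathscr{C}/\mathscr{C}_0$ and $\mathscr{D}\to\mathscr{D}/\mathscr{D}_0$ are the identity on objects, and $\overline{F}$ agrees with $F$ on objects; so any $V\in\Ob(\mathscr{D}/\mathscr{D}_0)=\Ob\mathscr{D}$ is of the form $F(X)$ up to isomorphism in $H^0\mathscr{D}$, hence up to isomorphism in $H^0(\mathscr{D}/\mathscr{D}_0)$ it is $\overline{F}(X)$.

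For the second condition I would first fix the representing small subcategories compatibly. Since $H^0(F_0)$ is an equivalence, one may take for $\overline{\mathscr{C}_0}$ one object per isomorphism class in $\mathscr{C}_0$ and for $\overline{\mathscr{D}_0}$ precisely the objects $\{F_0(Y):Y\in\overline{\mathscr{C}_0}\}$; this is a legitimate input for the Drinfeld construction, which does not depend up to equivalence on such a choice, and now $F_0$ induces a bijection of object sets $\overline{\mathscr{C}_0}\xrightarrow{\sim}\overline{\mathscr{D}_0}$, so that $\overline{F}$ simply sends $\varepsilon_Y$ to $\varepsilon_{F_0(Y)}$. Now fix $X,Y\in\Ob\mathscr{C}$ and filter $\Hom_{\mathscr{C}/\mathscr{C}_0}(X,Y)$ by the subcomplexes $\Phi_p=\bigoplus_{n\le p}\Hom_{\mathscr{C}/\mathscr{C}_0}^{(n)}(X,Y)$, where $\Hom^{(n)}$ is the span of the Drinfeld words containing exactly $n$ of the adjoined generators $\varepsilon$, as in \eqref{drinmor}. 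Because the differential either preserves $n$ or lowers it by one, this is an exhaustive, bounded-below ($\Phi_{-1}=0$) filtration by subcomplexes, respected by $\overline{F}$ together with the analogous filtration on the target. On associated graded, $\gr_n$ is the direct sum, over $n$-tuples $(Y_0,\dots,Y_{n-1})$ of objects of $\overline{\mathscr{C}_0}$, of the shifted tensor product $\bigl(\Hom_{\mathscr{C}}(X,Y_0)\otimes\Hom_{\mathscr{C}}(Y_0,Y_1)\otimes\dots\otimes\Hom_{\mathscr{C}}(Y_{n-1},Y)\bigr)[n]$ with its total tensor-product differential (the interposed copies of $k[1]$ only contribute the shift), and $\gr_n(\overline{F})$ carries this summand to the summand indexed by $(F_0Y_0,\dots,F_0Y_{n-1})$ via the tensor product of the maps $\Hom_{\mathscr{C}}(A,B)\to\Hom_{\mathscr{D}}(FA,FB)$ induced by $F$. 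Over the field $k$ each such summand map is a quasi-isomorphism by the Künneth formula, and as the index sets correspond under $F_0$, the map $\gr_n(\overline{F})$ is a quasi-isomorphism for every $n$. One then concludes, by induction through the long exact sequences of $0\to\Phi_{p-1}\to\Phi_p\to\gr_p\to 0$ and passage to the colimit over $p$, that $\overline{F}\colon\Hom_{\mathscr{C}/\mathscr{C}_0}(X,Y)\to\Hom_{\mathscr{D}/\mathscr{D}_0}(FX,FY)$ is a quasi-isomorphism.

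The step I expect to be the main obstacle is exactly the object bookkeeping just described: one must organize the choices of representing subcategories so that, after applying $F$, the Drinfeld words of the source become those of the target along a bijection of indexing tuples, since an uncontrolled sum of quasi-isomorphisms of complexes need not be a quasi-isomorphism. If one wishes to bypass this, there is a short formal alternative: by Drinfeld's universal property of the dg quotient recalled above, $F$ together with $F_0$ determines a morphism $[\mathscr{C}/\mathscr{C}_0]\to[\mathscr{D}/\mathscr{D}_0]$ in $\Ho\mathscr{C}at^\dg(k)$ which is represented by $\overline{F}$; applying the same construction to a homotopy inverse of $F$ --- which carries $H^0\mathscr{D}_0$ into $H^0\mathscr{C}_0$, again because $H^0(F_0)$ is an equivalence --- and using the functoriality of the quotient on $\Ho\mathscr{C}at^\dg(k)$ provided by the universal property, one obtains a two-sided inverse to the class of $\overline{F}$, so $\overline{F}$ is an isomorphism in $\Ho\mathscr{C}at^\dg(k)$, hence a quasi-equivalence.
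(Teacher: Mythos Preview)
Your argument is essentially the paper's: both filter the $\Hom$-complexes by the number of inserted $\varepsilon$'s as in \eqref{drinmor} and observe that on associated graded the induced map is a (shifted) tensor product of the quasi-isomorphisms $\Hom_{\mathscr{C}}(-,-)\to\Hom_{\mathscr{D}}(F-,F-)$, hence itself a quasi-isomorphism. The paper packages the conclusion as the spectral sequence of the filtered cone (which degenerates at $E_1$) rather than your inductive long-exact-sequence and colimit argument, and it does not spell out the choice $\overline{\mathscr{D}_0}=F_0(\overline{\mathscr{C}_0})$ that you make explicit; your alternative via the universal property does not appear there.
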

\begin{proof}
Let $X,Y$ be any two objects in $\mathscr{C}$. We firstly prove that the map of $\Hom$-complexes
\begin{equation}
\overline{F}(X,Y)\colon \Hom_{\mathscr{C}/\mathscr{C}_0}(X,Y)\to\Hom_{\mathscr{D}/\mathscr{D}_0}(FX,FY)
\end{equation}
is a quasi-isomorphism of complexes.

To this end, consider the cone $\Cone(\overline{F}(X,Y))$; one needs to prove that it is an acyclic complex.

The complexes $\Hom_{\mathscr{C}/\mathscr{C}_0}(X,Y)$, $\Hom_{\mathscr{D}/\mathscr{D}_0}(FX,FY)$ admit natural ascending filtrations, see \eqref{drinmor}. The map $\overline{F}(X,Y)$ is a map of filtered complexes. Therefore, the cone $\Cone(\overline{F}(X,Y))$ inherits this filtration. We compute the corresponding to this filtration spectral sequence.

This spectral sequence $\{E_r^{p,q},d_r\}$ is non-zero in I and in IV quarters, $d_r\colon E_r^{p,q}\to E_r^{p-r,q+r+1}$. Therefore, it converges to
$\gr H^\udot(\Cone(\overline{F}(X,Y)))$ by dimensional reasons.

The term $E_0=\oplus_p F_{p+1}/F_p$ is easy to describe. Namely, it is a free expression \eqref{drinmor}, with zero values of the differential on objects $k[1]$. The cone of the map of complexes $F(Y_i,Y_{i+1})\colon\mathscr{C}(Y_i,Y_{i+1})\to\mathscr{D}(FY_i,FY_{i+1})$ is acyclic, as $F$ is a quasi-equivalence. Therefore, the term $E_1$ vanishes everywhere, and the map $\overline{F}(X,Y)$ is a quasi-isomorphism.

It remains to prove that the corresponding map of the homotopy categories,
\begin{equation}
H^0\overline{F}\colon H^0(\mathscr{C}/\mathscr{C}_0)\to H^0(\mathscr{D}/\mathscr{D}_0)
\end{equation}
is an equivalence of categories. It enough to prove $\overline{F}$ is essentially surjective, which is clear.
\end{proof}

Now we prove the Theorem.

\begin{proof}
Recall the category $\mathscr{PC}at_-^\dg(\k)$ introduced in Section \ref{section33}. It is a monoidal category.

Let $\mathscr{A}$ be as in the statement of Theorem. In notations of Definition \ref{defn41}, denote $\mathscr{J}_0=\mathscr{A}_0\cap\mathscr{J}$.
We construct, out of $\mathscr{A}$, a Leinster monoid $F_\mathscr{A}$ in $\mathscr{PC}at^\dg_{-}(\k)$, as in Lemma \ref{sectioncart}.
Its underlying functor $F_{\mathscr{A}}\colon \Delta_f^\opp\to \mathscr{PC}at^\dg_{-}(\k)$ is defined on objects as
\begin{equation}\label{who1}
F_{\mathscr{A}}([n])=(\mathscr{A}_0^{\otimes n};\ \mathscr{C}^{[n]}_1,\dots,\mathscr{C}^{[n]}_n)
\end{equation}
where
\begin{equation}\label{who2}
\begin{aligned}
\ &\mathscr{C}^{[n]}_1=\mathscr{J}_0\otimes\mathscr{A}_0\otimes\mathscr{A}_0\otimes\dots\otimes \mathscr{A}_0\ \ \text{(totally $n-1$ factors $\mathscr{A}_0$)}\\
&\mathscr{C}^{[n]}_2=\mathscr{A}_0\otimes\mathscr{J}_0\otimes \mathscr{A}_0\otimes\dots\otimes \mathscr{A}_0\ \ \text{(totally $n-1$ factors $\mathscr{A}_0$)}\\
&\dots\dots\dots\\
&\mathscr{C}^{[n]}_n=\mathscr{A}_0\otimes\mathscr{A}_0\otimes\dots\otimes\mathscr{A}_0\otimes\mathscr{J}_0\ \ \text{(totally $n-1$ factors $\mathscr{A}_0$)}
\end{aligned}
\end{equation}
(for the dg category $\mathscr{C}^{[n]}_i$, the factor $\mathscr{J}_0$ is at $i$-th place).

The claim that $F_\mathscr{A}\colon \Delta_f^{\opp}\to\mathscr{PC}at^\dg_-(\k)$ is a functor, follows from the fact the $\mathscr{J}_0$ is a two-sided ideal in $\mathscr{A}_0$: $\mathscr{A}_0\odot\mathscr{J}_0\subset \mathscr{J}_0$, $\mathscr{J}_0\odot\mathscr{A}_0\subset \mathscr{J}_0$, which follows from Definition \ref{defn41}. Then we deduce that the Leinster monoid defined out of a strict monoidal category $\mathscr{A}_0$ as in Definition \ref{leinster1}, descents to a Leinster monoid structure on $F_{\mathscr{A}}$ in $\mathscr{PC}at^\dg_-(\k)$, with the identity colax-maps $F_{\mathscr{A}}([m+n])\to F_\mathscr{A}([m])\otimes F_\mathscr{A}([n])$.

Next, we apply the refined Drinfeld dg quotient functor $\mathscr{D}r\colon \mathscr{PC}at_-^\dg(\k)\to\mathscr{C}at^\dg_-(\k)$, defined in Proposition \ref{gendr}. By this Proposition, the functor $\mathscr{D}r$ has a natural colax-monoidal structure, whose colax maps are quasi-equivalences of dg categories.

We have the composition of colax-monoidal functors:
\begin{equation}
\Delta_f^\opp\xrightarrow{F_\mathscr{A}}\mathscr{PC}at^\dg_-(\k)\xrightarrow{\mathscr{D}r}\mathscr{C}at^\dg_-(\k)
\end{equation}
As a composition of such ones, the total functor is a colax-monoidal functor, whose colax maps are quasi-equivalences.

Next, me notice that for each $[n]$, the category $\mathscr{D}r\circ F_\mathscr{A} [n]$ has a distinguished object $*_{[n]}$. In fact,
\begin{equation}\label{comp1}
*_{[n]}=\mathscr{D}r(e\otimes e\otimes\dots \otimes e) \ \ \text{$n$ factors $e$}
\end{equation}
where $e$ is the unit in $\mathscr{A}$ (which belongs to $\mathscr{A}_0$ and is the unit in it). It means that the image of composition \eqref{comp1}
takes values in the category $\mathscr{C}at^\dg_{-*}(\k)$ of the corresponding dg categories {\it with a marked object}.

There is a natural functor
\begin{equation}\label{rhom}
\mathscr{H}om\colon\mathscr{C}at^\dg_{-*}(\k)\to\mathscr{M}on(\k),\ \ \mathscr{C}\mapsto \Hom_\mathscr{C}(*,*)
\end{equation}
to the category of monoids in $\mathscr{V}ect(\k)$ (that is, to the category of associative dg algebras over $\k$).

The functor $\mathscr{H}om$ is colax-monoidal with the identity colax maps.

Now the composition of \eqref{comp1} with the functor $\mathscr{H}om$ gives a functor
\begin{equation}\label{comp2}
\mathscr{H}om\circ \mathscr{D}r\circ F_\mathscr{A}\ \colon\ \Delta^\opp_0\to\mathscr{M}on(\k)
\end{equation}
As a composition of such ones, it is a colax-monoidal functor, whose colax maps are quasi-isomorphisms of dg algebras over $\k$.

\begin{klemma}\label{kld}
$\mathscr{H}om\circ \mathscr{D}r\circ F_\mathscr{A}([1])=\RHom^\udot_{\mathscr{A}}(e,e)$
\end{klemma}

\begin{proof}
The composition in the statement of Key-Lemma is the $\Hom$-complex $(\mathscr{A}_0/\mathscr{J}_0)(e,e)$.
By Lemmas \ref{lemmaxxx} and \ref{lemmaklemma}, we know that $\mathscr{A}_0/\mathscr{J}_0$ is quasi-equivalent to $\mathscr{A}/\mathscr{J}$.
Now the claim follows from Proposition \ref{keller}, which gives the Keller description of the dg quotient for a pre-triangulated dg category $\mathscr{C}$ and its full pre-triangulated dg subcategory $\mathscr{N}$.
\end{proof}

\begin{remark}{\rm
Note that we have not such explicit descriptions for $\mathscr{H}om\circ \mathscr{D}r\circ F_\mathscr{A}([\ell])$
for $\ell>1$. Indeed, the tensor product of two pre-triangulated categories is not pre-triangulated, in general. 
Consequently, the Keller description of dg quotent given in Proposition \ref{keller} can not be applied.
That is, the higher components in the constructed Leinster monoid in $\Alg(\k)$, are given rather indirectly. 
}
\end{remark}

What we get is the following. We have constructed a Leinster monoid $\mathscr{F}_\mathscr{A}$ in the monoidal category $\Alg(\k)$ of dg algebras over $\k$,
whose first component $\mathscr{F}_\mathscr{A}[1]$ is quasi-isomorphic to $\RHom^\udot_\mathscr{A}(e,e)$. The higher components $\mathscr{F}_{\mathscr{A}}[n]$, $n>1$, are hard to compute, but we do not need to know them explicitly. Only what we need is that for {\it some} higher components, the dg algebra $\RHom^\udot_\mathscr{A}(e,e)$ can be complemented (as the first component) to a Leinster monoid in the monoidal category $\Alg(\k)$ of dg algebras over $\k$. It follows, by Definition \ref{leinstern}, that $\RHom^\udot_{\mathscr{A}}(e,e)$ is a Leinster 2-algebra in $\Vect(\k)$.
\end{proof}

\section{\sc The polymonoidal (op)lax-functor defined by an $n$-fold monoidal category}\label{allthat}
\subsection{}
Before starting to deal with the Deligne conjecture for $n$-fold monoidal [BFSV] abelian categories for $n>1$, we need to recall some definitions on (monoidal) (op)lax-functors. Here we explain why. 

Given an $n$-fold monoidal category $\k$-linear category $\mathscr{C}$, $n>1$, we want to encode this data in a colax-monoidal functor 
$F=F_{\mathscr{C}}\colon(\Delta_f^\opp)^{\times n}\to \Cat$, as we have done for the case $n=1$. 

However, for $n>1$ it does not work immediately: when we define $F$ by
$$
F([m_1],[m_2],\dots,[m_n])=\mathscr{C}^{\otimes (m_1\cdot m_2\cdot ...\cdot m_n)}
$$
the morphisms from different factors $\Delta_f^\opp$ {\it do not commute}, but are expressed through the Eckmann-Hilton maps $\eta_{ij}$, though they do commute in the source category. This point was emphasized in [BFSV, Theorem 2.1]. What we get is not a honest functor but a lax-functor (see Definition \ref{quasi} below), as is proven in loc.cit.

[BFSV] deals with the case of set-enriched $n$-fold monoidal categories, where such a category gives rise to a lax-functor $(\Delta^\opp)^n\to \Cat$, see loc.cit., Theorem 2.1. Out Theorem \ref{monquasi} is a substitute for loc.cit. for the non cartesian-monoidal case.

We need to define what a (poly)monoidal (op)lax-functor is. We are lucky that our problem permits us to restrict ourselves 
with {\it strict} morphisms of (op)lax-functors, {\it strict} morphisms of (strict) (op)lax-bifunctors, etc. Otherwise, we ought to deal with ``higher coherence conditions'', see e.g. [DS1,2].

\subsection{\sc Definitions}
Recall that the difference between (strict) 2-categories and bicategories (the latter is more general than the former) is that in 2-categories the composition of 1-arrows is strictly associative, whence in bicategories it is associative up to 2-arrows (which 
are invertible and fulfil some coherence, see [ML, XII]). Any bi-category is bi-equivalent to a 2-category, see e.g. [Le2, Sect. 2.3].
This result can be considered as a generalization of the MacLane coherence theorem [ML, XI.3], as a strict (corresp., with relaxed up to a coherent isomorphism associativity) monoidal category gives rise to a 2-category (corresp., to a bicategory) with a single object. 
\begin{remark}\label{remgps}{\rm
The coherence theorem for {\it monoidal bi-categories} in its naive form fails, see [GPS], [DS1,2], [Sim]. It is not true in general that a suitably defined lax-monoid in bi-categories [DS2] is equivalent to a strict monoid in 2-categories with its cartesian monoidal product. There is another monoidal product on 2-categories called the Gray product, which is a relaxed version of the cartesian one. The correct coherence theorem [GPS] says that any lax-monoid in bi-categories is equivalent to a strict monoid in 2-categories, but with its Gray product. See also Remark \ref{remgpsbis} below.
}
\end{remark}

\begin{defn}\label{quasi}{\rm
Let $\mathscr{K}$ be a category and $\mathscr{X}$ a 2-category. A {\it lax-functor} $F\colon \mathscr{K}\to\mathscr{X}$ consists of functions assigning:
\begin{itemize}
\item[(1)] to each object $k\in \mathscr{K}$ an object $F(k)\in\mathscr{X}$;
\item[(2)] to each morphism $t\colon k_1\to k_2$ in $\mathscr{K}$, a 1-arrow $F(t)\colon F(k_1)\to F(k_2)$ in $\mathscr{X}$;
\item[(3)] to each composible pair of morphisms $k_1\xrightarrow{t_1}k_2\xrightarrow{t_2}k_3$ in $\mathscr{K}$, a 2-arrow
$f(t_1,t_2)\colon F(t_2)\circ F(t_1)\Rightarrow F(t_2\circ t_1)$ in $\mathscr{X}$;
\item[(4)] to each object $k\in \mathscr{K}$ a 2-arrow $f(k)\colon \id_{F(k)}\Rightarrow F(\id_k)$
\end{itemize}
They must satisfy the following conditions:
\begin{itemize}
\item[(i)] for any three composable morphisms $k_1\xrightarrow{t_1}k_2\xrightarrow{t_2}k_3\xrightarrow{t_3}k_4$
in $\mathscr{K}$ one has:
\begin{equation}
f(t_2t_1,t_3)\circ (F(t_3)\circ f(t_1,t_2))=f(t_1,t_3t_2)\circ (f(t_2,t_3)\circ F(t_1))
\end{equation}
\item[(ii)] for any morphism $t\colon k_1\to k_2$ in $\mathscr{K}$ one has
\begin{equation}
f(1,t)\circ (f(k_0)\circ F(k_1))=\id_{F(t)}=f(t,1)\circ (F(k_0)\circ f(k_1)) 
\end{equation}
\end{itemize}

}
\end{defn}
We will need as well the dual concept:
\begin{defn}\label{quasibis}{\rm
Let $\mathscr{K}$ be a category and $\mathscr{X}$ a 2-category. An {\it oplax-functor} $F\colon \mathscr{K}\to\mathscr{X}$ consists of functions assigning:
\begin{itemize}
\item[(1)] to each object $k\in \mathscr{K}$ an object $F(k)\in\mathscr{X}$;
\item[(2)] to each morphism $t\colon k_1\to k_2$ in $\mathscr{K}$, a 1-arrow $F(t)\colon F(k_1)\to F(k_2)$ in $\mathscr{X}$;
\item[(3)] to each composible pair of morphisms $k_1\xrightarrow{t_1}k_2\xrightarrow{t_2}k_3$ in $\mathscr{K}$, a 2-arrow
$g(t_1,t_2)\colon F(t_2\circ t_1)\Rightarrow  F(t_2)\circ F(t_1)$ in $\mathscr{X}$;
\item[(4)] to each object $k\in K$ a 2-arrow $g(k)\colon  F(\id_k)\Rightarrow\id_{F(k)}$
\end{itemize}
They must satisfy the conditions dual to those in the definition of a lax-functor, see Definition \ref{quasi}, (i),(ii). 
}
\end{defn}
In this paper, we consider only the case when $\mathscr{X}=\Cat$ or $\mathscr{X}=\Cat^\dg(\k)$. 
\begin{remark}{\rm
The 2-arrows $f(t_1,t_2)$ and $f(k)$ are not assumed to be invertible. When all they are invertible, a lax functor is called a pseudo-functor. In this case a lax-functor defines an oplax-functor. As well an oplax-functor with invertible $g(t_1,t_2)$ and $g(k)$ defines a lax-functor. 
}
\end{remark}

There is a concept of a lax-transform, which extends the concept of natural transformation between honest functors, to the case of lax-functors. 

\begin{defn}\label{lont}{\rm
\begin{itemize}
\item[(1)]
Let $F,G\colon K\to\mathscr{X}$ be two lax-functors to a 2-category $\mathscr{X}$. A (left) {\it lax-transform} $\varepsilon\colon F\Rightarrow G$ consists of functions assigning
\begin{itemize}
\item[(i)] to each object $k\in K$ a 1-arrow $\varepsilon(k)\colon F(k)\to G(k)$;
\item[(ii)] to each morphism $t\colon k_1\to k_2$ in $K$ a 2-arrow $\varepsilon(t): G(t)\circ \varepsilon(k_1)\Rightarrow \varepsilon(k_2)\circ F(t)$
\end{itemize}
subject to a list of axioms which the reader can find e.g. in [Th, Def. 3.1.3]. 
A lax-transform is called {\it a 2-isomorphism} if $\varepsilon(t)$ is an isomorphism 2-arrow for any $t$;
\item[(2)] a lax-transform $\varepsilon\colon F\Rightarrow G$ between two (op)lax-functors is called {\it strict} if all 2-arrows $\varepsilon(t)$, $t$ a morphism in $K$, are the identity arrows. 
\end{itemize}
}
\end{defn}
\begin{remark}{\rm 
The concept defined in Definition \ref{lont} may be also called a {\it left} lax-transform. Similarly, one can define a right lax-transform. In the case when a (left) lax-transform is an isomorphism it can be as well regarded as a right lax-transform. 
}
\end{remark}

\begin{defn}\label{bif}{\rm 
Let $\mathscr{K},\mathscr{L}$ be ordinary categories, $\mathscr{C}$ a 2-category. By a {\it strict} (op)lax-bifunctor $F\colon \mathscr{K}\times \mathscr{L}\to\mathscr{C}$ we mean an assignment defining an (op)lax functor of each argument for fixed other argument, such that for $f\colon X\to X^\prime$ a morphism in $\mathscr{K}$, $g\colon Y\to Y^\prime$ a morphism in $\mathscr{L}$, one has the strict commutativity:
\begin{equation}\label{eqbif}
F(\id_{X^\prime}\times g)\circ F(f\times\id_{Y})= F(f\times \id_{Y^\prime}) \circ F(\id_X\times g)
\end{equation}
We denote by $F(f\times g)$ the equal expressions in  \eqref{eqbif}.
}
\end{defn}
In this paper, we deal only with {\it strict} morphisms of (op)lax (bi-,poly-)functors, and the definition below restricts by this case. 
\begin{defn}\label{bifmap}
{\rm
Let $\mathscr{K},\mathscr{L}$ be ordinary categories, and $\mathscr{C}$ be a 2-category. Let $F_1,F_2\colon \mathscr{K}\times\mathscr{L}\to\mathscr{C}$ be two strict (op)lax bi-functors, see Definition \ref{bif}.
A {\it strict morphism of strict (op)lax bi-functors} $\Psi\colon F_1\Rightarrow F_2$ assigns to each objects $(k,\ell)$ of $\mathscr{K}\times \mathscr{L}$ a 1-arrow $$\Psi(k,\ell)\colon 
F_1(k,\ell)\to F_2(k,\ell)$$
in $\mathscr{C}$, such that for any morphism $f\colon k\to k^\prime$ and $g\colon\ell\to\ell^\prime$ the diagram below {\it strictly} commutes:
\begin{equation}\label{bifmapeq}
\xymatrix{
F_1(k,\ell)\ar[rr]^{\Psi(k,\ell)}\ar[d]_{F_1(f\times g)}&&F_2(k,\ell)\ar[d]^{F_2(f\times g)}\\
F_1(k^\prime,\ell^\prime)\ar[rr]^{\Psi(k^\prime,\ell^\prime)}&&F_2(k^\prime,\ell^\prime)
}
\end{equation}
}
\end{defn}
\begin{remark}\label{remgpsbis}{\rm
In the following definition we assume that $\mathscr{C}$ is a {\it strict} monoidal 2-category (with the product denoted $\otimes$) , with respect to the cartesian monoidal structure $\times$ on the category of all 2-categories. That is, we have a {\it strict} bifunctor $\otimes: \mathscr{C}\times \mathscr{C}\to \mathscr{C}$, where the strictness means that the morphisms acting on different factors strictly commute (which is granted by considering the cartesian product on the category of 2-categories), and this bifunctor is {\it strictly} associative. In particular, for an ordinary category $\mathscr{K}$ and an (op)lax-functor $F\colon \mathscr{K}\to\mathscr{C}$, the (op)lax-bifunctor functor $F_2\colon \mathscr{K}\times\mathscr{K}\to\mathscr{C}$, defined on objects as $F_2(X\times Y)=F_2(X)\otimes F_2(Y)$, is a strict (op)lax-bifunctor, see Definition \ref{bif}. This bifunctor $F_2$ is obtained as the composition $$\mathscr{K}\times \mathscr{K}\xrightarrow{F\times F}\mathscr{C}\times\mathscr{C}\xrightarrow{\otimes}\mathscr{C}$$

The 2-categories $\Cat$ and $\Cat^\dg(\k)$ are monoidal with the monoidal with respect to the cartesian product on $2-\Cat$, but they are not, however, strictly associative (they are associative up to a coherent system of isomorphisms). In general, it is {\it not} true that such a category is 2-equivalent to a strict monoidal category with respect to the cartesian product on $2-\Cat$, see Remark \ref{remgps}. The 2-categories $\Cat$ and $\Cat^\dg(\k)$ form a lucky exception; they are 2-equivalent by strictly associative categories monoidal 2-categories with respect to the cartesian product on $2-\Cat$. It can be provided by an explicit construction mimicking the MacLane construction in his proof of coherence theorem for monoidal 1-categories (which holds in general), see [ML, Section XI.3]. There is a relaxed monoidal product on $2-\Cat$, the Gray product, for which the strictification of the associativity is true in general. The price one pays for that is considering the bifunctors for which \eqref{eqbif} fails to hold on the nose, but holds up to a 2-arrow, which are subject to some coherence, and so on.

The fact that $\Cat$ and $\Cat^\dg(\k)$ are equivalent to strictly associative 2-categories with respect to the cartesian product, makes it possible to work with them as if they were strictly associative 2-categories with respect to the cartesian product on $2-\Cat$. By this reason, we can ignore the issue with non-associativity of the monoidal product in $\Cat$ and $\Cat^\dg(\k)$.}
\end{remark}

\begin{defn}\label{bifmon}{\rm
Let $\mathscr{K}$ be a strict monoidal 1-category, and $\mathscr{C}$ a strict monoidal 2-category (with respect to the cartesian product on $2-\Cat$).
An (op)lax-functor $F\colon\mathscr{K}\to \mathscr{C}$ is called {\it a strict  monoidal (op)lax-functor} if there is a strict map of (op)lax-bifunctors (see Definition \ref{bifmap}), 
$$
\Theta\colon F_1\Rightarrow F_2\colon \mathscr{K}\times\mathscr{K}\to\mathscr{C}
$$
(where $F_1$ is trivially a strict (op)lax-bifunctor, for $F_2$ see the discussion just above), 
and a map
$$
\eta\colon F(1_{\mathscr{K}})\to 1_{\mathscr{C}}
$$
which makes the following diagrams commute:
\begin{equation}
\xymatrix{
F(X\otimes Y\otimes Z)\ar[rr]^{\Theta(X,Y\otimes Z)}\ar[d]_{\Theta(X\otimes Y, Z)}&&F(X)\otimes F(Y\otimes Z)\ar[d]^{\id\otimes\Theta(Y,Z)}\\
F(X\otimes Y)\otimes F(Z)\ar[rr]^{\Theta(X,Y)\otimes\id}&&F(X)\otimes F(Y)\otimes F(Z)
}
\end{equation} 
and
\begin{equation}
\begin{array}{cc}
\xymatrix{
F(X)\ar[d]_{\id}\ar[r]^{\id}& F(X\otimes 1_{\mathscr{K}})\ar[d]^{\Theta}\\
F(X)\otimes 1_\mathscr{C}& F(X)\otimes F(1_\mathscr{K})\ar[l]_{\id\otimes \eta}
}
&
\xymatrix{
F(X)\ar[d]_{\id}\ar[r]^{\id}& F(1_{\mathscr{K}}\otimes X)\ar[d]^{\Theta}\\
 1_\mathscr{C}\otimes F(X)& F(1_\mathscr{K})\otimes F(X)\ar[l]_{\eta\otimes\id}
}
\end{array}
\end{equation}
}
\end{defn}

The last definition in this series specifies what a strict poly-monoidal (op)lax-functor is.
\begin{defn}\label{polymon}{\rm
Let $\mathscr{K}_1,\dots,\mathscr{K}_n$ be strict monoidal 1-categories, and let $\mathscr{C}$ be a strict monoidal 2-category (with respect to the cartesian product on $2-\Cat$). An (op)lax-functor 
$$F\colon \mathscr{K}=\mathscr{K}_1\times\dots\times\mathscr{K}_n\to\mathscr{C}$$
is called {\it a strict polymonoidal (op)lax-functor} if:
\begin{itemize}
\item[(1)] for any $1\le i\le n$ there is an (op)lax morphism of the (op)lax functors $$\Theta_i\colon F_1^i\Rightarrow F_2^i$$ where
$$
F_1^i,F_2^i\colon (\mathscr{K}_1\times\dots\times {\mathscr{K}}_i\times\times \mathscr{K}_i\dots\times \mathscr{K}_n)\to \mathscr{C}
$$
$$
F_1^i(k_1,\dots,{k}_i^\prime,k_i^\dprime\dots,k_n)=
F(k_1,\dots,k_{i-1},k_i^\prime\otimes k_i^{\prime\prime},\dots,k_n)
$$
$$
F_2^i(k_1,\dots,{k}^\prime_i,k_i^\dprime,\dots,k_n)=
F(k_1,\dots,k_i^\prime,\dots,k_n)\otimes F(k_1,\dots,k_i^{\prime\prime},\dots,k_n)
$$
and $F_1^i,F_2^i$ are defined on the morphisms accordingly,
\item[(2)] 
for any $1\le i\le n$ and any $\{k_j\in\mathscr{K}_j\}_{j\ne i}$ a morphism
\begin{equation}
U_i(k_1,\dots,k_n)\colon F(k_1,\dots,k_{i-1},e_i,k_{i+1},\dots,k_n)\to e_{\mathscr{C}}
\end{equation}
where $e_i$ is the monoidal unit in $\mathscr{K}_i$, and $e_\mathscr{C}$ is the monoidal unit in
$\mathscr{C}$.
\end{itemize}

These maps $\Theta_i$ should fulfil the following conditions:
\begin{itemize}
\item[(i)] for any $1\le i\le n$, and any morphisms $\alpha_j$ in $\mathscr{K}_j$, $j\ne i$, and $\alpha^\prime_i,\alpha_i^\dprime$ in $\mathscr{K}_i$, the diagram below {\it strictly commutes}:
\begin{equation}\label{polyeq}
\xymatrix{
F_1(k_1,\dots, k_i^\prime,k_i^\dprime,\dots,k_n)\ar[rr]^{\Theta}\ar[d]_{(f^\otimes)_*}&&F_2(k_1,\dots, k_i^\prime,k_i^\dprime,\dots,k_n)\ar[d]^{(f^\prime)_*\otimes (f^\dprime)_*}\\
F_1(\ell_1,\dots,\ell_i^\prime,\ell_i^\dprime,\dots,\ell_n)\ar[rr]^{\Theta}&&F_2(\ell_1,\dots,\ell_i^\prime,\ell_i^\dprime,\dots,\ell_n)
}
\end{equation}
where
$$
f^{\otimes}=(\alpha_1,\dots,\alpha_i^\prime\otimes\alpha_i^\dprime,\dots,\alpha_n),\ \ f^\prime=
(\alpha_1,\dots,\alpha_i^\prime,\dots,\alpha_n),\ \ f^\dprime=(\alpha_1,\dots,\alpha_i^\dprime,\dots,\alpha_n)
$$
\item[(ii)] $U_i$ agrees with the morphisms in $\mathscr{K}_1\times\dots\times\hat{\mathscr{K}}_i\times\dots\times\mathscr{K}_n$,
\item[(iii)] for any $i$ and for any $\{k_j\in\mathscr{K}_j\}_{1\le j\le n}$ the composition
\begin{equation}
F(k_1,\dots,k_n)\xrightarrow{\Theta_i}F(k_1,\dots,k_n)\otimes F(k_1,\dots,e_i,\dots,k_n)\xrightarrow{\id\otimes U_i}F(k_1,\dots,k_n)
\end{equation}
is the identity map,
\item[(iv)]
for any $i_1<i_2$ and for any $\{k_j\in\mathscr{K}_j\}_{j\ne i_1,i_2}$ one has
$$
U_{i_1}(k_1,\dots,\hat{k}_{i_1},\dots,\hat{k}_{i_2},\dots,k_n)=U_{i_2}(k_1,\dots,\hat{k}_{i_1},\dots,\hat{k}_{i_2},\dots,k_n)
$$

\end{itemize}
}
\end{defn}

\subsection{\sc The polymonoidal (op)lax-functor associated with an 
$n$-fold monoidal $\k$-linear category}
Let $\mathscr{C}$ be a $\k$-linear strict $n$-fold monoidal dg category. We assign to it a strict monoidal lax-functor
$$F_{\mathscr{C}}\colon (\Delta_f^\op)^{\times n}\to \mathscr{Cat}^\dg(\k)$$ and a strict monoidal oplax-functor
$$G_{\mathscr{C}}\colon (\Delta_f^\op)^{\times n}\to \mathscr{Cat}^\dg(\k)$$
Both functors are defined on objects as
\begin{equation}\label{street1}
F_{\mathscr{C}}([m_1],[m_2],\dots,[m_n])=G_{\mathscr{C}}([m_1],[m_2],\dots,[m_n])=\mathscr{C}^{\otimes (m_1\cdot m_2\cdot ...\cdot m_n)}
\end{equation}

Let $\alpha^i$ be a morphism in the $i$-th factor $\Delta_f^\op$ in the left-hand side of \eqref{street1}, corresponded to a map
$[\ell_i]\to [m_i]$ in $\Delta_f$. As in [BFSV, Section 2.1], we let $\alpha$ to act as 
\begin{equation}\label{street3}
F(\alpha^i)=(\alpha^i)^*\colon \mathscr{C}^{\otimes (m_1\dots m_{i}\dots m_n)}\to
\mathscr{C}^{\otimes (m_1\dots m_{i-1}\ell_i m_{i+1}\dots m_n)}
\end{equation}
as follows.

Denote by $\mathscr{A}=\mathscr{C}^{\otimes (m_{i+1}\cdot \dots\cdot m_n)}$. Regard $\mathscr{A}$ as a monoidal category, with the factor-wise monoidal product $\otimes_i$ (the $i$-th among the $n$ monoidal products which figure as a part of the structure of the $n$-fold monoidal category $\mathscr{C}$, see [BFSV, Def. 1.7]).

Then $\alpha^i$ defines (as for any monoidal category) a functor $F(\alpha^i)_+\colon \mathscr{A}^{\otimes m_i}\to \mathscr{A}^{\otimes \ell_i}$. Then the functor $F(\alpha^i)$ in \eqref{street3} is defined as 
\begin{equation}\label{street4}
F(\alpha^i)=(F(\alpha^i)_+)^{\otimes (m_1\cdot\dots\cdot m_{i-1})}
\end{equation}

Define 
\begin{equation}
G(\alpha^i)=F(\alpha^i)=(\alpha^i)^*
\end{equation}
by the same formula.

For a morphism $(\alpha_1,\alpha_2,\alpha_3,\dots,\alpha_n)$ in $(\Delta_f^\opp)^{\times n}$ 
define 
\begin{equation}\label{street5}
F((\alpha_1,\alpha_2,\alpha_3,\dots,\alpha_n))=F(\alpha_n^n)\circ\dots\circ F(\alpha_3^3)\circ F(\alpha_2^2)\circ F(\alpha^1_1)
\end{equation}
and
\begin{equation}\label{street5bis}
G((\alpha_1,\alpha_2,\alpha_3,\dots,\alpha_n))=G(\alpha_1^1)\circ\dots\circ G(\alpha_{n-2}^{n-2})\circ G(\alpha_{n-1}^{n-1})\circ G(\alpha_n^n)
\end{equation}
(We need to order the actions of morphisms acting in different factors $\Delta_f^\opp$ as they commute in $(\Delta_f^\opp)^n$ but their actions by \eqref{street4} do not commute). 

[BFSV, Section 2] constructs a morphism on functors 
\begin{equation}\label{street10}
F(\alpha^i)\circ F(\beta^j)\to F(\beta^j)\circ F(\alpha^i)
\end{equation}
{\it defined for $i<j$}, and for any $\alpha,\beta\in \Delta_f^\opp$, by making use of the Eckmann-Hilton maps $\eta^{ij}$.

In fact, [BFSV] constructs only the lax-functor $F_{\mathscr{C}}$; in virtue of \eqref{street10}, the oplax-functor $G_\mathscr{C}$ can be constructed simply by inverting the order of the composition, see \eqref{street5} and \eqref{street5bis}.

We can easily see it for $n=2$.
Let $\alpha=(\alpha_1,\alpha_2)$, $\beta=(\beta_1,\beta_2)$; then $\beta\alpha=(\beta_1\alpha_1,\beta_2\alpha_2)$.
Using more unified notation $(\alpha^i)^*=F(\alpha^i)=G(\alpha^i)$, one has:
$$
F(\alpha)=(\alpha^2_2)^*(\alpha_1^1)^*,\ \ F(\beta)=(\beta_2^2)^*(\beta_1^1)^*,\ \ F(\beta\alpha)=
(\beta_2^2\alpha_2^2)^*(\beta_1^1\alpha_1^1)^*=(\beta_2^2)^*(\alpha_2^2)^*(\beta_1^1)^*(\alpha_1^1)^*
$$
$$
G(\alpha)=(\alpha_1^1)^*(\alpha_2^2)^*,\ \ G(\beta)=(\beta_1^1)^*(\beta_2^2)^*,\ \ G(\alpha\beta)=
(\beta_1^1\alpha^1_1)^*(\beta_2^2\alpha_2^2)^*=(\beta_1^1)^*(\alpha_1^1)^*(\beta_2^2)^*(\alpha_2^2)^*
$$
(we made use that both $F$ and $G$ are strictly functorial for the composition of two morphisms acting on a fixed factor $i$).

Then \eqref{street10} gives morphisms of functors
\begin{equation}\label{street11}
(\alpha_1^1)^*(\beta_2^2)^*\to (\beta_2^2)^*(\alpha_1^1)^*,\ \ (\beta_1^1)^*(\alpha_2^2)^*\to (\alpha_2^2)^*(\beta_1^1)
\end{equation}
which give rise to morphisms
\begin{equation}\label{street12}
F(\beta)F(\alpha)\to F(\beta\alpha),\ \ G(\beta\alpha)\to G(\beta)G(\alpha)
\end{equation}

\begin{theorem}[cf. {[BFSV, Theorem 2.1]}]\label{monquasi}
Let $\mathscr{C}$ be a {\rm strict} $n$-fold monoidal dg category over $\k$. Then $\mathscr{C}$ gives rise to a strict poly-monoidal lax-functor 
$$
F_{\mathscr{C}}\colon (\Delta_f^\op)^{\times n}\to \mathscr{Cat}^\dg(\k)
$$

and to a strict poly-monoidal oplax-functor
$$
G_{\mathscr{C}}\colon (\Delta_f^\op)^{\times n}\to \mathscr{Cat}^\dg(\k)
$$
whose underlying (op)lax-functors are defined as above. 
The 2-arrows of the underlying lax-functor $F_\mathscr{C}$ and of the underlying oplax-functor $G_\mathscr{C}$ are equal to compositions of the Eckmann-Hilton maps $\eta_{ij}$.
\end{theorem}
See Definition \ref{polymon} for strict poly-monoidal (op)lax-functors. 
\begin{proof}
The statements that the constructed assignments are (op)lax-functors follow from the coherence theorem for $n$-fold monoidal categories, see [BFSV, Theorem 3.6], similarly to [BFSV, Theorem 2.1].

The statement that the (op)lax-functors $F_{\mathscr{C}}$ and $G_{\mathscr{C}}$ are strict poly-monoidal is equivalent to the commutativity of the diagram \eqref{polyeq}, which follows immediately from \eqref{street5} and \eqref{street5bis}.

The monoidal 2-category $\Cat^\dg(\k)$ (as well as the monoidal 2-category $\Cat$) is not strictly associative, and its associativity constrains should be implemented in the definition of a strict monoidal functor in the statement of theorem. It does not affect the statement, see Remark \ref{remgpsbis}.
\end{proof}

\section{\sc Deligne conjecture for essentially small $n$-fold monoidal abelian categories, $n\ge 1$}\label{deln}

\subsection{\sc Formulation of the result}
\begin{defn}\label{defn41n}{\rm
Let $\mathscr{A}$ be an abelian category, with an $n$-fold monoidal structure $(\otimes_1,\dots,\otimes_n;\ e)$ on it, see [BFSV, Section 1].
Denote by $\mathscr{A}^\dg$ the dg category of bounded from above complexes in $\mathscr{A}$, and let $\mathscr{J}\subset \mathscr{A}^\dg$ be the full dg subcategory of acyclic objects. Consider the full additive subcategory $\mathscr{A}_0\subset\mathscr{A}$ where $X\in\mathscr{A}_0$ iff
$X\otimes_i J$ and $J\otimes_i X$ are acyclic, for any acyclic $J\in\mathscr{J}$, and for any $1\le i\le n$. Denote by $\mathscr{A}_0^\dg\subset\mathscr{A}^\dg$ the full dg subcategory of bounded from above complexes in $\mathscr{A}_0$. We say that the exact and the monoidal structures in $\mathscr{A}$ are {\it weakly compatible} if the natural embedding
$$
\mathscr{A}_0^\dg\hookrightarrow \mathscr{A}^\dg
$$
is a quasi-equivalence of dg categories.
}
\end{defn}
The next definition did not appear in our discussion of the case $n=1$.
\begin{defn}\label{deff}{\rm
Let $\mathscr{A}$ be an abelian $\k$-linear category, with a weakly compatible $n$-fold monoidal structure on it.
We say that the $n$-fold monoidal structure is {\it non-degenerate} if for any four objects $X,Y,Z,W$ in $\mathscr{A}_0^\dg$, and for any $1\le i<j\le n$, the $(i,j)$-th Eckmann-Hilton map $\eta_{ij}\colon (X\otimes_j Y)\otimes_i (Z\otimes_j W)\to (X\otimes_i Z)\otimes_j (Y\otimes_i W)$ is a closed morphism of degree 0 and becomes an
isomorphism in the homotopy category $H^0(\mathscr{A}_0^\dg)$.
}
\end{defn}

Now we are ready to formulate our version of Deligne conjecture for $n$-fold monoidal abelian categories.

\begin{theorem}\label{delignesimplen}
Let $\k$ be a field of characteristic 0, and let $\mathscr{A}$ be essentially small $\k$-linear abelian category, endowed with $\k$-linear $n$-fold monoidal structure. Suppose the abelian and the $n$-fold monoidal structures are weakly compatible, see Definition \ref{defn41n}, and suppose that the $n$-fold monoidal structure is non-degenerate, see Definition \ref{deff}. Then $\RHom^\udot_{\mathscr{A}}(e,e)$ has a natural structure of a Leinster $(n,1)$-algebra over $\k$, whose underlying strict 1-algebra structure is given by the Yoneda product. (See Definition \ref{leinsterrelaxed} for weak Leinster $(n,1)$-monoids). More precisely, there is a weak Leinster $(n,1)$-monoid $X$ in $\Vect(\k)$, given by a functor $$X\colon (\Delta_f^\opp)^{\times n}\to\Alg(\k)$$
such that the underlying dg algebra over $\k$ $X_{1,\dots,1}$ is quasi-isomorphic to $\RHom^\udot_{\mathscr{A}}(e,e)$ with its Yoneda product.
\end{theorem}

Theorem \ref{delignesimplen} is proven in Section \ref{delignenproof} below.

\subsection{\sc Proof of Theorem \ref{delignesimplen}}\label{delignenproof}
Let $\mathscr{C}$ be an $n$-fold monoidal dg category over $\k$.

Recall the poly-monoidal oplax-functor $G_\mathscr{C}\colon (\Delta_f^\opp)^{\times n}\to 
\Cat^\dg(\k)$, given in Theorem \ref{monquasi}, with 
\begin{equation}\label{l3}
G_\mathscr{C}([m_1]\times\dots\times [m_n])=\mathscr{C}^{\otimes(m_1\dots m_n)}
\end{equation}
(It fails to be a honest functor from $(\Delta^\opp)^{\times n}\to\mathscr{C}at(\k)$ (where $\mathscr{C}at(\k)$ stands for the category of small $\k$-linear categories), as well as in the set-enriched case, see the discussion above Theorem \ref{monquasi}).

Let now $\mathscr{A}$ be a $\k$-linear abelian category with an $n$-fold monoidal structure on it.
Suppose that the abelian and the $n$-fold monoidal structures are weakly compatible, see Definition \ref{defn41n}, and suppose that the corresponding $n$-fold monoidal dg category $\mathscr{A}_0^\dg$ is non-degenerate, see Definition \ref{deff}.

Define the corresponding to the $n$-fold monoidal category $\mathscr{A}_0^\dg$ strict poly-monoidal oplax-functor
\begin{equation}
G=G_{\mathscr{A}_0^\dg}\colon (\Delta_f^\opp)^{\times n}\to\mathscr{C}at^\dg_-(\k)
\end{equation}
\begin{equation}
G([m_1]\times\dots\times [m_n])=(\mathscr{A}_0^\dg)^{\otimes(m_1\cdot\dots\cdot m_n)}
\end{equation}

We replace the involved essentially small dg categories by their small dg subcategories, as in Section \ref{section33}, and use the same notations for the corresponding small dg categories.

Lift the strict poly-monoidal oplax-functor $G\colon (\Delta_f^\opp)^{\times n}\to\mathscr{C}at^\dg_-(\k)$, to the strict poly-monoidal oplax-functor
$\hat{G}\colon (\Delta_f^\opp)^{\times n}\to \mathscr{PC}at^\dg_-(\k)$, as in our proof of $n=1$ case, see \eqref{who1}, \eqref{who2}.

More precisely, denote $\mathscr{J}_0=\mathscr{A}_0\cap\mathscr{J}$, see Definition \ref{defn41n}. Then
\begin{equation}
\hat{G}([m_1]\times[m_2]\times\dots\times[m_n])=\left((\mathscr{A}_0^\dg)^{\otimes (m_1\cdot\dots\cdot 
m_n)};\ \{\mathscr{C}_{i_1,\dots,i_n}^{[m_1,\dots,m_n]}\}_{1\le i_1\le m_1,\dots,1\le i_n\le m_n}\right)
\end{equation}
with
\begin{equation}
\mathscr{C}_{i_1,\dots,i_n}^{[m_1,\dots,m_n]}=\mathscr{C}_{i_1}^{[m_1]}\otimes \mathscr{C}_{i_2}^{[m_2]}\otimes\dots\otimes \mathscr{C}_{i_n}^{[m_n]}
\end{equation}
where $\mathscr{C}_{i_j}^{[m_j]}$ are defined as in \eqref{who2}.

Recall that $\mathscr{PC}at_-^\dg$ is a monoidal category of tuples of dg categories, see Section \ref{section33}.

Consider the composition
\begin{equation}\label{composa}
(\Delta_f^\opp)^{\times n}\xrightarrow{\hat{G}} \mathscr{PC}at^\dg_-(\k)\xrightarrow{\mathscr{D}r} \mathscr{C}at^\dg_-(\k)\xrightarrow{\mathscr{H}om}
\mathscr{M}on(\k)
\end{equation}
where $\mathscr{D}r$ is the refined Drinfeld dg quotient, constructed in Sections \ref{section330}, \ref{section33}, and $\mathscr{H}om(\mathscr{C})=\Hom_\mathscr{C}(*,*)$. 

Here, as well as in the proof of Theorem \ref{delignesimple}, we make use of the fact that the categories $\mathscr{D}r\circ \hat{G}([m_1]\times\dots\times [m_n])$ are ``based'', with the based objects (denoted by $*$) equal to the image of $[0]\times\dots\times[0]$
by the unique degeneracy morphism $[m_1]\times\dots \times [m_n]\to [0]\times\dots\times [0]$
in $\Delta_f^{\times n}$.

\begin{klemma}
The composition
$$
\mathscr{H}om\circ\mathscr{D}r\circ \hat{G}([1]\times\dots\times [1])\simeq \RHom^\udot_\mathscr{A}(e,e)
$$
as a dg algebra over $\k$.
\end{klemma}
The dg category $\hat{G}([1]\times[1]\times\dots \times [1])=\mathscr{A}_0$.
The rest is analogous to the proof of Key-Lemma \ref{kld}.

\qed

To complete the proof of Theorem \ref{delignesimplen}, we need to argue that the composition 
\eqref{composa} gives a weak Leinster $(n,1)$-monoid in $\Vect(\k)$. 
In fact, the partial composition of the first two functors $\mathscr{D}r\circ\hat{G}$ is a strict colax-polymonoidal oplax-functor, as it is a composition of the strict-polymonoidal oplax-functor $\hat{G}$ with the colax-monoidal strict functor $\mathscr{D}r$. 

Now we have a ``based'' strict colax-monoidal oplax-functor. Applying the $\Hom(*,*)$-functor to it, we get straightforwardly a weak Leinster $(n,1)$-monoid. (In fact, we composed the definition of the latter concept having this example in mind).

Theorem \ref{delignesimplen} is proven.

\section{\sc An application: the Gerstenhaber-Schack complex of a Hopf algebra}\label{stetr}
For any associative bialgebra $B$ over $\k$, there is a concept of {\it a tetramodule} over $B$.
Tetramodules form an abelian $\k$-linear category, denoted by $\Tetra(B)$. We proved in [Sh1], [Sh3] that $\Tetra(B)$ has a natural structure of a 2-fold monoidal category, in sense of [BFSV].
The deformation complex of a bialgebra $B$, the Gerstenhaber-Schack complex $C^\udot_\GS(B)$, can be intrinsically defined as $\RHom$ in the category $\Tetra(B)$:
$$
C^\udot_\GS(B)=\RHom^\udot_{\Tetra(B)}(B,B)
$$
Here we prove the following
\begin{theorem}\label{3alg}
Suppose $B$ is a Hopf algebra over $\k$ (a bialgebra over $\k$ with antipode). Then the 2-fold monoidal abelian category $\Tetra(B)$ satisfies the assumptions of Definitions \ref{defn41n} and \ref{deff}. More precisely, we can take for the additive subcategory $\mathscr{A}_0\subset\mathscr{A}=\Tetra(B)$ which figures in Definitions \ref{defn41n} and \ref{deff}, the entire abelian category $\Tetra(B)$. In particular, Theorem \ref{delignesimplen} is applicable to this category, and $\RHom^\udot_{\Tetra(B)}(B,B)$ has a structure of a Leinster 3-algebra over $\k$.
\end{theorem}
We proof the validation of the assumption of Definition \ref{defn41n} for $\mathscr{A}_0=\Tetra(B)$ in Proposition \ref{goedp}, and the validation of
the assumption of Definition \ref{deff} in Theorem \ref{goed} below. More precisely, we prove in Proposition \ref{goedp} that for a Hopf bialgebra $B$, the both monoidal products $\otimes_1$ and $\otimes_2$ are exact bi-functors, and that the Eckmann-Hilton map $\eta_{MNPQ}$ is an isomorphism for any $M,N,P,Q\in\Tetra(B)$. That is, what we prove below gives even stronger conditions than those of Definitions \ref{defn41n} and \ref{deff}.

\qed
\subsection{\sc Definitions}
Recall that {\it an associative bialgebra} over $\k$ is a $\k$-vector space $B$, endowed with a product $B\otimes B\to B$, a coproduct $\Delta\colon B\to B\otimes B$, a unit $i\colon k\to B$, a counit $\varepsilon\colon B\to k$ such that:
\begin{itemize}
\item[(i)] $(m,i)$ defines the structure of an associative algebra with unit on $B$,
\item[(ii)] $(\Delta,\varepsilon)$ defines a structure of a coassociative coalgebra with counit on $B$,
\item[(iii)] the compatibility: $\Delta(a*b)=\Delta(a)*\Delta(b)$ (here $a*b=m(a,b)$),
\item[(iv)] the counit is an algebra map, the unit is a coalgebra map.
\end{itemize}

Recall that an associative bialgebra is called a {\it Hopf algebra}, if there exists a $\k$-linear {\it antipode} map $S\colon B\to B$, satisfying the following properties:
\begin{itemize}
\item[(i)] $S$ is a linear isomorphism,
\item[(ii)] $m(1\otimes S)\Delta(x)=m(S\otimes 1)\Delta(x)=i(\varepsilon(x))$
\end{itemize}
One can deduce from this definition that
\begin{itemize}
\item[(iii)] $S(a*b)=S(b)*S(a)$, $\Delta(S(x))=(S\otimes S)(\Delta^\op(x))$,
\item[(iv)] $\varepsilon(S(x))=\varepsilon(x)$, $S(i(1))=i(1)$.
\end{itemize}

Recall that a {\it tetramodule} over a bialgebra $B$ is a $\k$-vector space $M$ such that $(B\oplus \epsilon M)[\epsilon]/(\epsilon^2)$ is once again an associative bialgebra, over $k[\epsilon]/(\epsilon^2)$, such that the canonical maps $B[\epsilon]/(\epsilon^2)\to (B\oplus \epsilon M)[\epsilon]/(\epsilon^2)$ and
$(B\oplus \epsilon M)[\epsilon]/(\epsilon^2)\to B[\epsilon]/(\epsilon^2)$ are bialgebra maps. It results to four structures:
\begin{itemize}
\item[T(i)] a left $B$-module structure $m_\ell\colon B\otimes M\to M$,
\item[T(ii)] a right $B$-module structure $m_r\colon M\otimes B\to B$,
\item[T(iii)] a left comodule structure $\Delta_\ell\colon M\to B\otimes M$,
\item[T(iv)] a right comodule structure $\Delta_r\colon M\to M\otimes B$
\end{itemize}
subject to the following compatibilities:
\begin{itemize}
\item[TC(i)] equipped with $(m_\ell,m_r)$, $M$ is a bimodule,
\item[TC(ii)] equipped with $(\Delta_\ell,\Delta_r)$, $M$ is a bi-comodule,
\item[TC(iii)] four ``bialgebra compatibilities'':
\end{itemize}
\begin{equation}\label{eq4.10}
\Delta_\ell(a*m)=(\Delta^1(a)*\Delta_\ell^1(m))\otimes
(\Delta^2(a)*\Delta^2_\ell(m))\subset B\otimes_k M
\end{equation}

\begin{equation}\label{eq4.11}
\Delta_\ell(m*a)=(\Delta_\ell^1(m)*\Delta^1(a))\otimes
(\Delta_\ell^2(m)*\Delta^2(a))\subset B\otimes_k M
\end{equation}

\begin{equation}\label{eq4.12}
\Delta_r(a*m)=(\Delta^1(a)*\Delta_r^1(m))\otimes
(\Delta^2(a)*\Delta^2_r(m))\subset  M\otimes_k B
\end{equation}

\begin{equation}\label{eq4.13}
\Delta_r(m*a)=(\Delta_r^1(m)*\Delta^1(a))\otimes
(\Delta^2_r(m)*\Delta^2(a))\subset M\otimes_k B
\end{equation}

As well, we see that the underlying $\k$-vector space $B$ is a $B$-tetramodule. It is the unit of the two-fold monoidal structure on $\Tetra(B)$, constructed in [Sh1], [Sh3].

The category of tetramodules is very important because of its relation to the Gerstenhaber-Schack complex [GS], the ``deformation complex'' of an associative bialgebra. The following result is due to R.Taillefer [Ta1,2] (see also an overview of Taillefer's results in [Sh1, Section 3]).
\begin{prop}
Let $B$ be an associative algebra over a field $\k$. Then the Gerstenhaber-Schack complex of $B$ is quasi-isomorphic to $\RHom^\udot_{\Tetra(B)}(B,B)$.
\end{prop}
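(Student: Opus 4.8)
The plan is to read $\RHom^\udot_{\Tetra(B)}(B,B)$ as the derived $\Hom$ computing $\Ext^\udot_{\Tetra(B)}(B,B)$ in the abelian category of tetramodules, and to identify this with the Gerstenhaber--Schack complex $\GS^\udot(B)$ by exhibiting (co)resolutions of the unit tetramodule $B$ whose $\Hom$-complex is, term by term, the Gerstenhaber--Schack bicomplex $C^{p,q}=\Hom_k(B^{\otimes p},B^{\otimes q})$. The starting observation is structural: the four operations T(i)--T(iv) organize a tetramodule as \emph{simultaneously} a $B$-bimodule and a $B$-bicomodule, compatibly via the four bialgebra relations TC(iii). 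The forgetful functor $\Tetra(B)\to\Bimod(B)$ admits a left adjoint (free bimodule, lifted to a tetramodule on the comodule side), while the forgetful functor from $\Tetra(B)$ to $B$-bicomodules admits a right adjoint (cofree bicomodule, lifted to a tetramodule on the module side). These two adjunctions are what make the bicomplex appear.

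First I would resolve $B$ in the ``algebra'' direction by the two-sided bar resolution of $B$ as a $B$-bimodule, whose $p$-th term is the free bimodule $B^{\otimes(p+2)}$ on $B^{\otimes p}$, each term lifted to a tetramodule by the coproduct so that the bar differential is a morphism in $\Tetra(B)$; call this complex $P_\bullet\to B$. Dually I would coresolve $B$ in the ``coalgebra'' direction by the cobar (Cartier) coresolution of $B$ as a $B$-bicomodule, whose $q$-th term is the cofree bicomodule $B^{\otimes(q+2)}$ on $B^{\otimes q}$, lifted to a tetramodule by the product; call it $B\to I^\bullet$. The decisive computation is that the free/cofree adjunctions collapse the mixed $\Hom$ to a bare vector-space $\Hom$:
\begin{equation}
\Hom_{\Tetra(B)}(P_p,\,I^q)\;\cong\;\Hom_k(B^{\otimes p},B^{\otimes q}).
\end{equation}
Mapping out of the free bimodule factor trades $B^{\otimes p}$ for a plain $\Hom_k$, and mapping into the cofree bicomodule factor produces the $B^{\otimes q}$.

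Next I would verify that, under this identification, the differential coming from $P_\bullet$ induces the Hochschild differential of the algebra $B$ with coefficients in the $B$-bimodule $B^{\otimes q}$ (the bimodule structure being the diagonal one through the iterated coproduct), and the differential coming from $I^\bullet$ induces the co-Hochschild (Cartier) differential of the coalgebra $B$ with coefficients in the $B$-bicomodule $B^{\otimes p}$. These are exactly the horizontal and vertical differentials of the Gerstenhaber--Schack bicomplex $C^{\udot\udot}$. Passing to the total complex then gives $\RHom^\udot_{\Tetra(B)}(B,B)\simeq\GS^\udot(B)$, provided the double complex $\Hom_{\Tetra(B)}(P_\bullet,I^\bullet)$ genuinely computes $\Ext^\udot_{\Tetra(B)}(B,B)$.

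The hard part will be the homological bookkeeping that justifies that last ``provided''. Because $B$ need not be finite-dimensional, one cannot dualize the bicomodule structure into a module structure over a dual algebra, so the argument must be carried out with honest cofree-bicomodule (cotensor) constructions throughout, in the framework of relative homological algebra à la Cartan--Eilenberg. Concretely, one must show the \emph{balancing} statement: that resolving the first argument by the relatively projective (free-bimodule) complex $P_\bullet$ and coresolving the second by the relatively injective (cofree-bicomodule) complex $I^\bullet$ yields a hyper-$\Hom$ double complex whose total cohomology is $\Ext^\udot_{\Tetra(B)}(B,B)$; this requires that each $P_p$ be $\Hom_{\Tetra(B)}(-,I^\bullet)$-acyclic and each $I^q$ be $\Hom_{\Tetra(B)}(P_\bullet,-)$-acyclic, together with exactness of the two augmentations. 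The second delicate point is the sign-accurate matching of the induced differentials with the Gerstenhaber--Schack differentials, which amounts to unwinding the four compatibilities TC(iii) term by term. This is precisely the content of Taillefer's computation in [Ta1,2], whose argument I would follow.
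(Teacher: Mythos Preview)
The paper does not actually prove this proposition: it attributes the result to Taillefer [Ta1,2] (with a pointer to the overview in [Sh1, Section 3]) and immediately writes \qedsymbol. Your proposal sketches precisely Taillefer's bar/cobar double-resolution argument and explicitly says you would follow [Ta1,2], so you are in complete agreement with the paper's ``proof''---indeed you supply more detail than the paper does.
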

\qed

\subsection{\sc Tetramodules over a Hopf algebra}
Recall that a Hopf module over a bialgebra $B$ over $\k$ is a $\k$-vector spaces $M$, endowed with a left $B$-module structure $m_\ell\colon B\otimes M\to M$, with a left $B$-comodule structure $\Delta_\ell\colon M\to B\otimes M$, such that \eqref{eq4.10} holds. In particular, any tetramodule over $B$ defines an underlying Hopf module over $B$.

The proof of Theorem \ref{3alg} is based on the following classical result, see [Sw, Theorem 4.1.1]:
\begin{klemma}\label{keywho}
\
\begin{itemize}
\item[(i)]
Let $B$ be a Hopf algebra, and let $M$ be a left Hopf module over $B$. Then $M$ is free as left $B$-module and is cofree as left $B$-comodule.
An analogous claim is true also when $M$ is a right Hopf module over $B$. More specifically, let $M$ be a left Hopf module over $B$, denote by $\Delta_\ell\colon M\to B\otimes M$ its comodule map. Denote
$$
M_\Delta=\{m\in M|\Delta_\ell(m)=1\otimes m\}
$$
Then the map of left $B$-modules $$\alpha\colon B\otimes_k M_\Delta\to M$$
$b\otimes m^\prime\mapsto b\cdot m^\prime$, is an isomorphism of Hopf modules. Analogously for right Hopf modules.
\item[(ii)] The inverse to the map $\alpha$ is a map
$$
\beta\colon M\to B\otimes M_\Delta
$$
is given by
\begin{equation}\label{keywho1}
\beta(m)=(\id_B\otimes P)\circ \Delta_\ell(m)
\end{equation}
where $P$ is the composition
\begin{equation}\label{keywho2}
P\colon M\xrightarrow{\Delta_\ell}B\otimes M\xrightarrow{S\otimes\id_M}B\otimes M\xrightarrow{m_\ell}M
\end{equation}
and, in fact,
\begin{equation}\label{keywho3}
P(M)=M_\Delta
\end{equation}
\end{itemize}
\end{klemma}
\qed
\begin{remark}
{\rm
In Key-Lemma above, it is very essential that $B$ is a Hopf algebra. The claim fails when $B$ is a general associative bialgebra.
}
\end{remark}

\begin{coroll}\label{coroll2014.1}
Let $B$ be a Hopf algebra, and $M$ a $B$-tetramodule. Then $M$ is in particular a left Hopf $B$-module and a right Hopf $B$-module.
Therefore, we have isomorphisms:
\begin{equation}\label{eq2014.1}
B\otimes_k M_{\Delta_\ell}\xrightarrow{\alpha_\ell}M\xleftarrow{\alpha_r}M_{\Delta_r}\otimes_k B
\end{equation}
where
\begin{equation}
M_{\Delta_\ell}=\{m\in M|\Delta_\ell(m)=1\otimes m\},\ \ M_{\Delta_r}=\{m\in M|\Delta_r(m)=m\otimes 1\}
\end{equation}
Furthermore, one can invert $\alpha_\ell$ and $\alpha_r$ explicitly, with $\beta_{\ell}=\alpha_{\ell}^{-1}$ and $\beta_r=\alpha_r^{-1}$ given by
\begin{equation}
\beta_\ell(m)=(\id_B\otimes P_\ell)\circ \Delta_\ell(m),\ \ \beta_r(m)=(P_r\otimes \id_B)\circ \Delta_r(m)
\end{equation}
where
\begin{equation}\label{eqplr}
P_\ell\colon M\xrightarrow{\Delta_\ell}B\otimes M\xrightarrow{S\otimes\id_M}B\otimes M\xrightarrow{m_\ell}M,\ \
P_r\colon M\xrightarrow{\Delta_r}M\otimes B\xrightarrow{\id_M\otimes S}M\otimes B\xrightarrow{m_r}M
\end{equation}
The left module structure $m_\ell$ and the left comodule structure $\Delta_\ell$ can be recovered from the leftmost term of \eqref{eq2014.1} as the product and the coproduct of $B$ (acting as identity on $M_{\Delta_\ell}$), the right module structure $m_r$ and the right comodule structure $\Delta_r$ can be recovered from the rightmost term of \eqref{eq2014.1} as the product and the coproduct of $B$ (acting as identity on $M_{\Delta_r}$).
\end{coroll}
\begin{proof}
The statements that $(m_\ell,\Delta_\ell)$ defines a left Hopf $B$-module on $M$, and that $(m_r,\Delta_r)$ defines a right Hopf $B$-module on $M$, are straightforward (see, however, Remark \ref{remark2014.1}). The remaining statements follow directly from Key-Lemma \ref{keywho}.
\end{proof}

\begin{remark}\label{remark2014.1}
{\rm
The category of tetramodules over $B$ fails to be the category of left Hopf modules over $B\otimes B^\opp$. Indeed, for $M$ a left Hopf module over $B\otimes B^\opp$, $M$ is endowed with structures of left and right $B$-modules (denote them by $m_\ell$ and $m_r$), and by left and right $B$-comodules (denote them by $\Delta_\ell$ and $\Delta_r$). For these 4 structures, $(m_\ell,\Delta_\ell)$ and $(m_r,\Delta_r)$ are compatible as the corresponding structures for a tetramodule, that is, as in \eqref{eq4.10} and \eqref{eq4.13}, correspondingly. However, two other tetramodule compatibilities \eqref{eq4.11} and \eqref{eq4.12} fail, as $m_\ell$ {\it commutes} with $\Delta_r$, and $m_r$ {\it commutes} with $\Delta_\ell$. Yet another way to see it is that the tautological tetramodule $B$ is not of the form $B\otimes B^\opp\otimes V$ for a vector space $V$.
}
\end{remark}

\begin{remark}
{\rm
It was mentioned to the author by V.Hinich that the results of P.Schauenburg [Scha] may imply that for the case of Hopf algebras $B$, the category of $B$-tetramodules is equivalent to the category of left Yetter-Drinfeld $B$-modules, where for a Yetter-Drinfeld module $L$ the underlying vector space of the corresponding tetramodule is $L\otimes B$. As the category of Yetter-Drinfeld modules is braided monoidal, it is expected that the category $\Tetra(B)$ is equivalent to the category of Yetter-Drinfeld modules over $B$, where the braiding on $\Tetra(B)$ follows from Theorem \ref{goed} and the Joyal-Street Theorem \ref{jst}.
}
\end{remark}

We develop the formalism of representing of a tetramodule $M$ over a Hopf algebra $B$ as \eqref{eq2014.1} a bit further, proving the following Lemma.
\begin{lemma}
Let $B$ be a Hopf algebra, $M\in \Tetra(B)$. The operators $P_\ell$ and $P_r$ introduced in \eqref{eqplr} obey the following identities:
\begin{equation}
P_\ell(b\cdot m)=\varepsilon(b)\cdot P_\ell(m)
\end{equation}
\begin{equation}
P_r(m\cdot b)=\varepsilon(b)\cdot P_r(m)
\end{equation}
\begin{equation}
P_\ell(b\cdot m)=S(\Delta^{(1)}b)\cdot P_\ell(m)\cdot \Delta^{(2)}b
\end{equation}
\begin{equation}
P_r(m\cdot b)=(\Delta^{(1)}b)\cdot P_r(m)\cdot S(\Delta^{(2)}b)
\end{equation}
for any $m\in M, b\in B$,
where we use the Sweedler notation $\Delta b=\Delta^{(1)}b\otimes \Delta^{(2)}b$.

\end{lemma}
\begin{proof}
\end{proof}

\begin{lemma}
Let $B$ be a Hopf algebra, $M\in\Tetra(B)$, $m_{\Delta_\ell}\in M_{\Delta_\ell}=\{m\in M|\Delta_{\ell}m=1\otimes m\}$.
Then, for any $b\in B$, the element
\begin{equation}
P_\ell(m_{\Delta_\ell}\cdot b)=S(\Delta^{(1)}b)\cdot m_{\Delta_\ell}\cdot \Delta^{(2)}b
\end{equation}
and
\begin{equation}
\Delta_r^{(1)}(m_{\Delta_\ell})
\end{equation}
belong to $M_{\Delta_\ell}$ as well, where the right coaction is $\Delta_rm_{\Delta_\ell}=\Delta_r^{(1)}m_{\Delta_\ell}\otimes \Delta_r^{(2)}m_{\Delta_\ell}\in M\otimes B$.

Analogously, if $m_{\Delta_r}\in M_{\Delta_r}=\{m\in M|\Delta_r(m)=m\otimes 1\}$, and $b\in B$, the element
\begin{equation}
P_r(b\cdot m_{\Delta_r})=\Delta^{(1)}(b)\cdot m_{\Delta_r}\cdot S(\Delta^{(2)}b)
\end{equation}
and
\begin{equation}
\Delta_\ell^{(2)}m_{\Delta_r}
\end{equation}
belong to $M_{\Delta_r}$ as well, where the left action is $\Delta_\ell(m_{\Delta_r})=\Delta_\ell^{(1)}(m_{\Delta_r})\otimes \Delta_\ell^{(2)}(m_{\Delta_r})\in B\otimes M$.
\end{lemma}

\begin{lemma}\label{lemmagreat}
Let $B$ be a Hopf algebra, $M\in\Tetra(B)$.
Then there are two decompositions
\begin{equation}\label{eqleftdec}
M=B\otimes_kM_{\Delta_\ell}
\end{equation}
and
\begin{equation}\label{eqrightdec}
M=M_{\Delta_r}\otimes B
\end{equation}
In the decomposition \eqref{eqleftdec}, the left action $m_\ell$ and the left coaction $\Delta_\ell$ act on the first factor $B$ as
\begin{equation}\label{eqgreat1}
b^\prime\cdot(b\otimes m_{\Delta_\ell})=(b^\prime\cdot b)\otimes m_{\Delta_\ell},\ \ \Delta_\ell(b\otimes m_{\Delta_\ell})=\Delta^{(1)}(b)\otimes(\Delta^{(2)}(b)\otimes m_{\Delta_\ell})
\end{equation}
the right action $m_r$ is
\begin{equation}\label{eqgreat2}
(b\otimes m_{\Delta_\ell})\cdot b^\prime=(b\cdot \Delta^{(1)}b^\prime)\otimes \left(S(\Delta^{(2)}b^\prime)\cdot m_{\Delta_\ell}\cdot \Delta^{(3)}b^\prime\right)
\end{equation}
and the right coaction $\Delta_r$ is
\begin{equation}\label{eqgreat3}
\Delta_r(b\otimes m_{\Delta_\ell})=\left(\Delta^{(1)}b\otimes\Delta^{(1)}_rm_{\Delta_\ell}\right)\otimes (\Delta^{(2)}b\cdot \Delta_r^{(2)}m_{\Delta_\ell})
\end{equation}
In the decomposition \eqref{eqrightdec}, the right action $m_r$ and the right coaction $\Delta_r$ act on the second factor $B$ as
\begin{equation}\label{eqgreat4}
(m_{\Delta_r}\otimes b)\cdot b^\prime=m_{\Delta_r}\otimes (b\cdot b^\prime),\ \ \Delta_r(m_{\Delta_r}\otimes b)=(m_{\Delta_r}\otimes\Delta^{(1)}b)\otimes\Delta^{(2)}b
\end{equation}
the left action $m_\ell$ is
\begin{equation}\label{eqgreat5}
b^\prime\cdot (m_{\Delta_r}\otimes b)=\left(\Delta^{(1)}b^\prime\cdot m_{\Delta_r}\cdot S(\Delta^{(2)}b^\prime)\right)\otimes (\Delta^{(3)}b^\prime\cdot b)
\end{equation}
and the left coaction $\Delta_\ell$ is
\begin{equation}\label{eqgreat6}
\Delta_\ell(m_{\Delta_r}\otimes b)=(\Delta^{(1)}_\ell m_{\Delta_r}\cdot \Delta^{(1)}b)\otimes \left(\Delta_\ell^{(2)}m_{\Delta_r}\otimes\Delta^{(2)}b\right)
\end{equation}
\end{lemma}

\smallskip

To continue with a proof of Theorem \ref{3alg}, we recall the construction of the 2-fold monoidal structure on $\Tetra(B)$.

\subsection{\sc The 2-fold monoidal structure on $\Tetra(B)$}\label{last}
Recall some constructions of [Sh1].
Let $B$ be an associative bialgebra, and let $M,N$ be two tetramodules over it.

One firstly define two their
``external'' tensor products $M\boxtimes_1N$ and
$M\boxtimes_2N$ (which are $B$-tetramodules once again). In both
cases the underlying vector space is $M\otimes_kN$. The tetramodule structures are defined as follows (where $a\in B$, $m\in M$, $n\in N$):

\smallskip

{\it The case of $M\boxtimes_1N$}:
\begin{equation}\label{thefirstprod}
\begin{aligned}
\ \ \ \ \ \ \ \ \ \ \ \ \ \ & m_\ell(a\otimes m\boxtimes n)=(am)\boxtimes n\\
&m_r(m\boxtimes n\otimes a)=m\boxtimes (na)\\
&\Delta_\ell(m\boxtimes
n)=(\Delta_\ell^1(m)*\Delta_\ell^1(n))\otimes
(\Delta^2_\ell(m)\boxtimes\Delta^2_\ell(n))\\
&\Delta_r(m\boxtimes n)=(\Delta^1_r(m)\boxtimes
\Delta^1_r(n))\otimes (\Delta^2_r(m)*\Delta^2_r(n))
\end{aligned}
\end{equation}

\smallskip

{\it The case of $M\boxtimes_2 N$}:
\begin{equation}\label{thesecondprod}
\begin{aligned}
\ &m_\ell(a\otimes m\boxtimes
n)=(\Delta^1(a)m)\boxtimes (\Delta^2(a)n)\\
&m_r(m\boxtimes n\otimes
a)=(m\Delta^1(a))\boxtimes(n\Delta^2(a))\\
&\Delta_\ell(m\boxtimes
n)=\Delta^1_\ell(m)\otimes(\Delta_\ell^2(m)\boxtimes n)\\
&\Delta_r(m\boxtimes
n)=(m\boxtimes\Delta_r^1(n))\otimes\Delta_r^2(n)
\end{aligned}
\end{equation}

\smallskip

Next, one defines
\begin{equation}\label{mon1}
M\otimes_1N=M\boxtimes_1N/\{\sum_i(m_ia)\boxtimes_1n_i-\sum_im_i\boxtimes_1(an_i),\ a\in B\}
\end{equation}
and
\begin{equation}\label{mon2}
M\otimes_2N=\left\{\sum_im_i\boxtimes_2 n_i\subset
M\boxtimes_2 N|\sum_i \Delta_r(m_i)\otimes_k
n_i=\sum_im_i\otimes_k\Delta_\ell(n_i)\right\}
\end{equation}

In [Sh1], we constructed for any four $M,N,P,Q\in\Tetra(B)$ the {\it Eckman-Hilton map}
\begin{equation}
\eta_{MNPQ}\colon (M\otimes_2 N)\otimes_1(P\otimes_2Q)\to (M\otimes_1P)\otimes_2(N\otimes_1Q)
\end{equation}
which is proven to satisfy all necessary commutative diagrams (see [BFSV], Section 1) making $\Tetra(B)$ a 2-fold monoidal category.

In particular, the tautological tetramodule $B\in\Tetra(B)$ is the two-sided unit for both $\otimes_1$ and $\otimes_2$:
\begin{equation}
B\otimes_1M=M\otimes_1B=B\otimes_2M=M\otimes_2B=M
\end{equation}
for any $M\in\Tetra(B)$.

For further reference, we summarize in Lemma below some properties of the map $\eta_{MNPQ}$, proven in [Sh1], Section 2.2.3.

\begin{lemma}\label{lemma2014.la}
The map $\eta_{MNPQ}$ is induced by the map
\begin{equation}\label{eq2014.la}
\hat{\eta}_{MNPQ}\colon (M\boxtimes_2N)\boxtimes_1(P\boxtimes_2Q)\to (M\boxtimes_1P)\boxtimes_2(N\boxtimes_1Q)
\end{equation}
defined on the underlying vector spaces as
\begin{equation}\label{eq2014.la2}
m\otimes_k n\otimes_k p\otimes_k q\mapsto m\otimes_k p\otimes_k n\otimes_k q
\end{equation}
for $m,n,p,q$ elements of $M,N,P,Q$, correspondingly. By ``induced'' is meant the following: starting with $\hat{\eta}_{MNPQ}$, we firstly consider the projection of the rhs of \eqref{eq2014.la} to $(M\otimes_1P)\boxtimes_2(N\otimes_1Q)$ and show that the composition of $\hat{\eta}_{MNPQ}$ with this projection descents to a well-defined map
\begin{equation}\label{eq2014.la3}
\hat{\hat{\eta}}_{MNPQ}\colon (M\boxtimes_2N)\otimes_1(P\boxtimes_2Q)\to (M\otimes_1P)\boxtimes_2(N\otimes_1Q)
\end{equation}
Nextly, we restrict the lhs of \eqref{eq2014.la3} to its subspace $(M\otimes_2N)\otimes_1(P\otimes_2Q)$, and show that the image of this subspace by
$\hat{\hat{\eta}}_{MNPQ}$ belongs to $(M\otimes_1P)\otimes_2(N\otimes_1Q)$ (which is a subspace of the rhs of \eqref{eq2014.la3}). The resulting map
$$
(M\otimes_2N)\otimes_1(P\otimes_2Q)\to (M\otimes_1P)\otimes_2(N\otimes_1Q)
$$
is the map $\eta_{MNPQ}$.
\end{lemma}

\qed

\begin{lemma}\label{prowho}
Let $B$ be a Hopf algebra over $\k$, $M,N$ two $B$-tetramodules. Then both monoidal products
$M\otimes_1 N$ and $M\otimes_2N$ have isomorphic underlying vector space, isomorphic to
\begin{equation}\label{eq2014.u1}
M_{\Delta_r}\otimes_k B\otimes_k N_{\Delta_\ell}
\end{equation}
(in notations of Corollary \ref{coroll2014.1}).
The projection
\begin{equation}\label{eq2014.u2}
M\boxtimes_1N=(M_{\Delta_r}\otimes_kB)\otimes_k(B\otimes_k N_{\Delta_\ell})\to M_{\Delta_r}\otimes_k B\otimes_k N_{\Delta_\ell}=M\otimes_1N
\end{equation}
is given by the product map $B\otimes_kB\to B$ in the middle, and the identity on the leftmost and the rightmost terms. The inclusion
\begin{equation}\label{eq2014.u3}
M\otimes_2N=M_{\Delta_r}\otimes_k B\otimes_k N_{\Delta_\ell}\hookrightarrow (M_{\Delta_r}\otimes_kB)\otimes_k(B\otimes_k N_{\Delta_\ell})=M\boxtimes_2N
\end{equation}
is given by the coproduct $\Delta\colon B\to B\otimes_k B$ in the middle, and the identity on the leftmost and the rightmost terms.
\end{lemma}

\begin{proof}
We use the presentations $M=M_{\Delta_r}\otimes_k B$ and $N=B\otimes_kN_{\Delta_\ell}$, given by Corollary \ref{coroll2014.1}.
In these presentations, we can recover $\Delta_r$ and $m_r$ for $M$, and $\Delta_\ell$ and $m_\ell$ for $N$, as the coproduct and the product on $B$.

Now, by \eqref{mon1}, the equation \eqref{eq2014.u2} follows as the product $m\colon B\otimes_k B\to B$ is surjective (as $B$ contains unit).
To deduce \eqref{eq2014.u3} from \eqref{mon2}, we need to know that the kernel of the map
$$
d\colon B\otimes_k B\to B\otimes_kB\otimes_kB
$$
defined by
$$
d(b_1\otimes b_2)=\Delta(b_1)\otimes b_2-b_1\otimes \Delta(b_2)
$$
is the image of the coproduct $\Delta\colon B\to B\otimes_k B$. The latter follows from the acyclicity of the cobar-complex of any coalgebra with counit.
\end{proof}

We can prove now the first part of Theorem \ref{3alg}.
\begin{prop}\label{goedp}
Let $B$ be a Hopf algebra over $\k$. Then both monoidal products $\otimes_1$ and $\otimes_2$ on $\Tetra(B)$ are exact bi-functors.
\end{prop}
\begin{proof}
It follows from Lemma \ref{lemmagreat} and Lemma \ref{prowho}. By Lemma \ref{lemmagreat}, any tetramodule over a Hopf algebra $B$ is a free left $B$-module,
free right $B$-module, cofree left $B$-comodule, and cofree right $B$-comodule.
Then Lemma \ref{prowho} shows that it implies the exactness of $\otimes_1$ and of $\otimes_1$ on the level of underlying vector spaces, and therefore their exactness as bi-functors on the category of tetramodules.
\end{proof}

We pass now to study of the Eckmann-Hilton map $\eta_{MNPQ}$ for the category $\Tetra(B)$, where $B$ is a Hopf algebra.
The second part of Theorem \ref{3alg} is proven in Theorem \ref{goed} at the end of this Section.

\begin{prop}\label{propo}
Let $B$ be a Hopf algebra over $\k$, $M,N$ two $B$-tetramodules. Then the tetramodules
$M\otimes_1 N$ and $M\otimes_2N$ are isomorphic. Using the vector space isomorphisms of both $M\otimes_1N$ and $M\otimes_2N$ to
$M_{\Delta_r}\otimes_kB\otimes_kN_{\Delta_\ell}$ from Lemma \ref{prowho}, the identity map
\begin{equation}
\varphi=\id\colon M_{\Delta_r}\otimes_kB\otimes_kN_{\Delta_\ell}\to M_{\Delta_r}\otimes_kB\otimes_kN_{\Delta_\ell}
\end{equation}
defines an isomorphism of tetramodules
$$
\varphi\colon M\otimes_1N\to M\otimes_2N
$$
\end{prop}
\begin{proof}
We give two different proofs of the Proposition, both of which are instructive.

\smallskip

{\it The first proof:}

Consider general 2-fold monoidal category $\mathscr{C}$ with unit $e$ (see [BFSV], Section 1), with the Eckmann-Hilton map
$$
\eta_{MNPQ}\colon (M\otimes_2 N)\otimes_1(P\otimes_2Q)\to (M\otimes_1P)\otimes_2(N\otimes_1Q)
$$
(where $M,N,P,Q\in\mathscr{C}$). It is a morphism in $\mathscr{C}$. Take $N=P=e$, then we get the morphism
\begin{equation}\label{eq2014.u7}
\eta_{MeeQ}\colon M\otimes_1Q\to M\otimes_2Q
\end{equation}
It is also a morphism in $\mathscr{C}$.

For the case $\mathscr{C}=\Tetra(B)$, we prove that this morphism $\eta_{MeeQ}$ is an isomorphism (where $e=B$ is the tautological tetraodule).

We know (see Lemma \ref{lemma2014.la}) that the map $\eta_{MNPQ}$ is induced by the map
$\hat{\eta}_{MNPQ}$ which is just the transposition of the two middle factors, see \eqref{eq2014.la2}.
In the same time, we want to use the presentation for the underlying vector space \eqref{eq2014.u1} we just found.
Our goal is to prove that \eqref{eq2014.u7} is an {\it isomorphism of vector spaces} (because it is a map of tetramodules by the above general argument).

The diagram below is not commutative, but becomes commutative after passing $\boxtimes_i\to\otimes_i$ ($i=1,2$):
\begin{equation}
\xymatrix{
&((M_{\Delta_r}\otimes_kB)\boxtimes_2B)\boxtimes_1(B\boxtimes_2(B\otimes_kN_{\Delta_\ell}))\ar[dd]^{{\hat{\eta}}_{MBBN}}\\
M_{\Delta_r}\otimes_kB\otimes_kN_{\Delta_\ell}\ar[ur]^{f_1}\ar[dr]_{f_2}\\
&((M_{\Delta_r}\otimes_kB)\boxtimes_1B)\boxtimes_2(B\boxtimes_1(B\otimes_kN_{\Delta_\ell}))
}
\end{equation}
where
\begin{equation}
\begin{aligned}
\ &f_1(m\otimes b \otimes n)=((m\otimes 1)\boxtimes_21)\boxtimes_1(\Delta^1(b)\boxtimes_2(\Delta^2(b)\otimes n))\\
&f_2(m\otimes b^\prime\otimes n)=((m\otimes 1)\boxtimes_1\Delta^1(b^\prime))\boxtimes_2(\Delta^2(b^\prime)\boxtimes_1(1\otimes n))
\end{aligned}
\end{equation}
where $m\in M_{\Delta_r}$, $n\in N_{\Delta_\ell}$, $b\in B$.
The maps $f_1,f_2$ are compatible with the vector space isomorphisms of Lemma \ref{prowho}.

\qed

\smallskip

{\it The second proof:}

We write down explicitly the tetramodule structures on $M\otimes_1N$ and $M\otimes_2N$ identifying the underlying vector spaces with
$M_{\Delta_r}\otimes_k B\otimes_k N_{\Delta_ell}$ as in Lemma \ref{prowho}. We use for that explicit formulas found in Lemma \ref{lemmagreat}.

We will show that the left actions are equal and that the left coaction are equal for $M\otimes_1N$ and $M\otimes_2N$; the case of right actions and right coactions goes similarly.

We use the following isomorphisms of $M\otimes_1N$ and of $M\otimes_2N$ with $M_{\Delta_r}\otimes_kB\otimes_kN_{\Delta_r}$:
\begin{equation}\label{twostrings}
\xymatrix{
&&(m_{\Delta_r}\otimes b)\boxtimes_1(1\otimes n_{\Delta_\ell})\subset M\boxtimes_1N\\
m_{\Delta_r}\otimes b\otimes n_{\Delta_\ell}\ar[urr]^{i_1}\ar[drr]_{i_2}\\
&&(m_{\Delta_r}\otimes\Delta^{(1)}b)\boxtimes_2(\Delta^{(2)}b\otimes n_{\Delta_\ell})\subset M\boxtimes_2 N
}
\end{equation}

\smallskip

{\it The case of $M\otimes_1N$:}

For the left action, one has:
\begin{equation}
\begin{aligned}
\ &a\cdot((m_{\Delta_r}\otimes b)\boxtimes_1 (1\otimes n_{\Delta_r}))\overset{\text{by \eqref{thefirstprod}}}=(a\cdot (m_{\Delta_r}\otimes b))\boxtimes_1(1\otimes n_{\Delta_r})\overset{\text{by \eqref{eqgreat1}}}{=}\\
&\left((\Delta^{(1)}a\cdot m_{\Delta_r}\cdot S(\Delta^{(2)}a)\otimes \Delta^{(3)}a\cdot b\right)\boxtimes_1(1\otimes n_{\Delta_{\ell}})=\\
&
i_1\left((\Delta^{(1)}a\cdot m_{\Delta_r}\cdot S(\Delta^{(2)}a))\otimes (\Delta^{(3)}a\cdot b)\otimes n_{\Delta_\ell})\right)
\end{aligned}
\end{equation}
where $i_1$ is the upper arrow in diagram \eqref{twostrings}.

For the left coaction, one has:
\begin{equation}
\begin{aligned}
\ &\Delta_\ell((m_{\Delta_r}\otimes b)\boxtimes_1 (1\otimes n_{\Delta_r}))\overset{\text{by \eqref{thefirstprod}}}=\\
&\left(\Delta_\ell^{(1)}(m_{\Delta_r}\otimes b)\cdot \Delta_\ell^{(1)}(1\otimes n_{\Delta_\ell})\right)\otimes \left(\Delta_\ell^{(2)}(m_{\Delta_r}\otimes b)\boxtimes_1 \Delta_\ell^{(2)}(1\otimes n_{\Delta_\ell})\right)\overset{\text{by \eqref{eqgreat3}}}=\\
&(\Delta_\ell^{(1)}m_{\Delta_r}\cdot \Delta^{(1)}b\cdot 1)\otimes \left((\Delta_\ell^{(2)}m_{\Delta_r}\otimes\Delta^{(2)}b)\boxtimes_1(1\otimes n_{\Delta_\ell})\right)=\\
&(\Delta_\ell^{(1)}m_{\Delta_r}\cdot \Delta^{(1)}b)\otimes i_1\left(\Delta_\ell^{(2)}m_{\Delta_r}\otimes\Delta^{(2)}b\otimes n_{\Delta_\ell}\right)
\end{aligned}
\end{equation}

\smallskip

{\it The case of $M\otimes_2N$:}

For the left action, one has:
\begin{equation}
\begin{aligned}
\ &a\cdot\left((m_{\Delta_r}\otimes\Delta^{(1)}b)\boxtimes_2(\Delta^{(2)}b\otimes n_{\Delta_\ell})\right)\overset{\text{by \eqref{thesecondprod}}}=\\
&\Delta^{(1)}a\cdot(m_{\Delta_r}\otimes\Delta^{(1)}b)\boxtimes_2\Delta^{(2)}a\cdot(\Delta^{(2)}b\otimes n_{\Delta_\ell})\overset{\text{by \eqref{eqgreat2}}}=\\
&\left((\Delta^{(1)}a\cdot m_{\Delta_r}\cdot S(\Delta^{(2)}a))\otimes (\Delta^{(3)}a\cdot \Delta^{(1)}b)\right)\boxtimes_2
(\Delta^{(4)}a\cdot\Delta^{(2)}b\otimes n_{\Delta_\ell})=\\
&i_2\left((\Delta^{(1)}a\cdot m_{\Delta_r}\cdot S(\Delta^{(2)}a))\otimes (\Delta^{(3)}a\cdot b)\otimes n_{\Delta_\ell}\right)
\end{aligned}
\end{equation}
where $i_2$ is the lower arrow in diagram \eqref{twostrings}.

For the left coaction, one has:
\begin{equation}
\begin{aligned}
\ &\Delta_\ell\left((m_{\Delta_r}\otimes\Delta^{(1)}b)\boxtimes_2(\Delta^{(2)}b\otimes n_{\Delta_\ell})\right)\overset{\text{by \eqref{thesecondprod}}}=\\
&\Delta_\ell^{(1)}(m_{\Delta_r}\otimes\Delta^{(1)}b)\otimes\left(\Delta_{\ell}^{(2)}(m_{\Delta_r}\otimes\Delta^{(1)}b)\boxtimes_2(\Delta^{(2)}b\otimes n_{\Delta_\ell})\right)\overset{\text{by \eqref{eqgreat4}}}=\\
&(\Delta_\ell^{(1)}m_{\Delta_r}\cdot \Delta^{(1)}b)\otimes\left((\Delta^{(2)}_\ell m_{\Delta_r}\otimes\Delta^{(2)}b)\boxtimes_2(\Delta^{(3)}b\otimes n_{\Delta_\ell})\right)=\\
&(\Delta_\ell^{(1)}m_{\Delta_r}\cdot \Delta^{(1)}b)\otimes i_2\left(\Delta^{(2)}_\ell m_{\Delta_r}\otimes \Delta^{(2)}b\otimes n_{\Delta_\ell}\right)
\end{aligned}
\end{equation}
We see that the left action of $a\in B$ on an element of $M_{\Delta_r}\otimes_k B\otimes_k N_{\Delta_\ell}$ is $i_1(X)$ when
$M_{\Delta_r}\otimes_k B\otimes_k N_{\Delta_\ell}$ is considered as $M\otimes_1N$, and is $i_2(X)$, {\it for the same $X$}, when
$M_{\Delta_r}\otimes_k B\otimes_k N_{\Delta_\ell}$ is considered as $M\otimes_2 N$; the similar result holds for the left coaction(s).

The case of the right action and the right coaction is similar.
\end{proof}

\begin{theorem}\label{goed}
Let $B$ be a Hopf algebra over $\k$, $M,N,P,Q$ be any four $B$-tetramodules.
Then the Eckmann-Hilton map $\eta_{MNPQ}$ is an isomorphism.
\end{theorem}
\begin{proof}
We start with a Lemma:
\begin{lemma}\label{lemmawd}
Let $B$ be a Hopf algebra, $M,N$ be $B$-tetramodules. Consider the ``right form'' of them, see Lemma \ref{lemmagreat}:
\begin{equation}
M=M_{\Delta_r}\otimes_k B,\ \ N=N_{\Delta_r}\otimes_k B
\end{equation}
Then the ``right form'' of the tetramodule $M\otimes_1N$ is
\begin{equation}
M\otimes_1N=(M_{\Delta_r}\otimes_k N_{\Delta_r})\otimes_k B
\end{equation}
with the standard right action and the standard right coaction (acting only on the rightmost factor $B$), the left action given by
\begin{equation}\label{wq1}
a\cdot(m_{\Delta_r}\otimes n_{\Delta_r}\otimes b)=(\Delta^{(1)}a\cdot m_{\Delta_r}\cdot S(\Delta^{(2)}a))\otimes (\Delta^{(3)}a\cdot n_{\Delta_r}\cdot S(\Delta^{(4)}a))\otimes (\Delta^{(5)}a\cdot b)
\end{equation}
and the left coaction given by
\begin{equation}\label{wq2}
\Delta_\ell(m_{\Delta_r}\otimes n_{\Delta_r}\otimes b)=\left(\Delta_\ell^{(1)}m_{\Delta_r}\cdot \Delta^{(1)}_\ell n_{\Delta_r}\cdot \Delta^{(1)}b\right)\otimes_k\left(\Delta_\ell^{(2)}m_{\Delta_r}\otimes\Delta_\ell^{(2)}n_{\Delta_r}\otimes\Delta^{(2)}b\right)
\end{equation}
The map
\begin{equation}\label{pick1}
\vartheta_{MN,r}\colon(M_{\Delta_r}\otimes_k B)\otimes_1 (N_{\Delta_r}\otimes_k B)\to (M_{\Delta_r}\otimes_k N_{\Delta_r})\otimes_k B
\end{equation}
\begin{equation}\label{pick2}
\vartheta_{MN,r}((m_{\Delta_r}\otimes b_1)\otimes_1 (n_{\Delta_r}\otimes b_2))=\left(m_{\Delta_r}\otimes (\Delta^{(1)}b_1\cdot n_{\Delta_r}\cdot S(\Delta^{(2)}b_1))\right)
\otimes (\Delta^{(3)}b_1\cdot b_2)
\end{equation}
is a map of tetramodules. The map $\theta_{MN,r}$ is an isomorphism for any $M,N$.

There are analogous statements for the ``left form'' presentations $M=B\otimes_k M_{\Delta_\ell}$, $N=B\otimes_k N_{\Delta_\ell}$, and their product $M\otimes_1 N$.
\end{lemma}
\begin{proof}
By the definition of $M\otimes_1N$ as the quotient of $M\boxtimes_1N$, see \eqref{mon1}, we have the following identity in $M\otimes_1N$:
\begin{equation}\label{wd1}
\begin{aligned}
\ &\left((m_{\Delta_r}\otimes 1)\cdot b_1\right)\otimes_1(n_{\Delta_r}\otimes b_2)=(m_{\Delta_r}\otimes 1)\otimes_1 \left(b_1\cdot(n_{\Delta_r}\otimes b_2)\right)\overset{\text{by \eqref{eqgreat5}}}=\\
&(m_{\Delta_r}\otimes 1)\otimes_1\left((\Delta^{(1)}b_1\cdot n_{\Delta_r}\cdot S(\Delta^{(2)}b_1))\otimes (\Delta^{(3)}b_1\cdot b_2)\right)
\end{aligned}
\end{equation}
It is clear therefore that the map \eqref{pick2} is an isomorphism of vector spaces.

It only remains to deduce the tetramodule structure from the one on the left-hand side of \eqref{wd1}.
Here we use, first of all, the formulas \eqref{thefirstprod} for the tetramodule structure $M\boxtimes_1N$.

Then we find for the left action:
\begin{equation}\label{wd2}
\begin{aligned}
\ &a\cdot\left((m_{\Delta_r}\otimes b_1)\otimes_1 (n_{\Delta_r}\otimes b_2)\right)\overset{\text{by \eqref{thefirstprod}}}=
\left(a\cdot (m_{\Delta_r}\otimes b_1)\right)\otimes_1(n_{\Delta_r}\otimes b_2)\overset{\text{by \eqref{eqgreat5}}}=\\
&\left((\Delta^{(1)}a\cdot m_{\Delta_r}\cdot S(\Delta^{(2)}a))\otimes (\Delta^{(3)}a\cdot b_1)\right)\otimes_1 (n_{\Delta_r}\otimes b_2)\overset{\text{by \eqref{wd1}}}=\\
&\left((\Delta^{(1)}a\cdot m_{\Delta_r}\cdot S(\Delta^{(2)}a))\otimes 1\right)\otimes_1\left((\Delta^{(1)}(\Delta^{(3)}a\cdot b_1)\cdot n_{\Delta_r}\cdot S(\Delta^{(2)}(\Delta^{(3)}a\cdot b_1)))\otimes (\Delta^{(3)}(\Delta^{(3)}a\cdot b_1)\cdot b_2)\right)=\\
&\left((\Delta^{(1)}a\cdot m_{\Delta_r}\cdot S(\Delta^{(2)}a))\otimes 1\right)\otimes_1\left(\left(\Delta^{(4)}a\cdot \Delta^{(1)}b_1\cdot n_{\Delta_r}\cdot S(\Delta^{(2)}b_1)\cdot S(\Delta^{(4)}a)\right)\otimes(\Delta^{(5)}a\cdot \Delta^{(3)}b_1\cdot b_2)\right)
\end{aligned}
\end{equation}
It follows from \eqref{wd2} that, within the identification \eqref{wd1}, one has:
\begin{equation}\label{wd3}
a\cdot(m_{\Delta_r}\otimes n_{\Delta_r}\otimes b)=(\Delta^{(1)}a\cdot m_{\Delta_r}\cdot S(\Delta^{(2)}a))\otimes (\Delta^{(3)}a\cdot n_{\Delta_r}\cdot S(\Delta^{(4)}a))\otimes (\Delta^{(5)}a\cdot b)
\end{equation}
It is the formula \eqref{wq1} for the left action.

The formula \eqref{wq2} for the left coaction is deduced analogously.

Finally, the only elements satisfying $\Delta_r(X)=X\otimes 1$ are the linear combinations of the elements
$m_{\Delta_r}\otimes n_{\Delta_r}\otimes 1$. That is, \eqref{pick1} and \eqref{pick2} give indeed the ``left form'' presentation for the teramodule $M\otimes_1 N$.
\end{proof}
We pass now to the proof of Theorem.

Consider the map
\begin{equation}\label{wj1}
\eta_{MNPQ}\colon (M\otimes_2N)\otimes_1(P\otimes_2Q)\to (M\otimes_1P)\otimes_2(N\otimes_1 Q)
\end{equation}
We use the presentation $M_{\Delta_r}\otimes_k B\otimes_k N_{\Delta_\ell}$ for $M\otimes_2 N$, and
the presentation $P_{\Delta_r}\otimes_k B\otimes_k Q_{\Delta_\ell}$ for $P\otimes_2 Q$, with the corresponding isomorphisms \eqref{eq2014.u3}.

That is, a general element in $M\otimes_2N$ is (a linear combination of the elements) $(m_{\Delta_r}\otimes \Delta^{(1)}b_1)\otimes_2(\Delta^{(2)}b_1\otimes m_{\Delta_\ell})$, and a general element in $P\otimes_2 Q$ is (a linear combination of the elements)
$(p_{\Delta_r}\otimes \Delta^{(1)}b_2)\otimes_2(\Delta^{(2)}b_2\otimes q_{\Delta_\ell})$.

Due to the $\otimes_1$-product, we can assume that $b_2=1$.
Thus, the lhs of \eqref{wj1} has form
$M_{\Delta_r}\otimes_k B\otimes_k N_{\Delta_\ell}\otimes_k P_{\Delta_r}\otimes_k Q_{\Delta_\ell}$, and the corresponding isomorphism is
\begin{equation}
m_{\Delta_r}\otimes b\otimes n_{\Delta_\ell}\otimes p_{\Delta_r}\otimes q_{\Delta_\ell}\mapsto \left((m_{\Delta_r}\otimes\Delta^{(1)}b)\otimes_2 (\Delta^{(2)}b\otimes n_{\Delta_\ell})\right)\otimes_1 \left((p_{\Delta_r}\otimes 1)\otimes_2 (1\otimes q_{\Delta_\ell})\right)
\end{equation}
The map $\eta_{MNPQ}$, due to its description in Lemma \ref{lemma2014.la}, acts as
\begin{equation}
\begin{aligned}
\ &\left((m_{\Delta_r}\otimes\Delta^{(1)}b)\otimes_2 (\Delta^{(2)}b\otimes n_{\Delta_\ell})\right)\otimes_1 \left((p_{\Delta_r}\otimes 1)\otimes_2 (1\otimes q_{\Delta_\ell})\right)\mapsto \\
&\left((m_{\Delta_r}\otimes\Delta^{(1)}b)\otimes_1 (p_{\Delta_r}\otimes 1)\right)\boxtimes_2 \left((\Delta^{(2)}b\otimes n_{\Delta_\ell})\otimes_1(1\otimes q_{\Delta_\ell})\right)
\end{aligned}
\end{equation}
We need to prove that this map is an isomorphism.

Now we use the isomorphisms
\begin{equation}
(M_{\Delta_r}\otimes_k B)\otimes_1(P_{\Delta_r}\otimes_k B)\to (M_{\Delta_r}\otimes_k P_{\Delta_r})\otimes_k B
\end{equation}
and
\begin{equation}
(B\otimes_k N_{\Delta_\ell})\otimes_1 (B\otimes_k Q_{\Delta_\ell})\to B\otimes_k(N_{\Delta_\ell}\otimes_k Q_{\Delta_\ell})
\end{equation}
given by Lemma \ref{lemmawd}.

This lemma establishes that these maps {\it are isomorphisms}, and they map
\begin{equation}\label{wdw}
\begin{aligned}
\ &\vartheta_{MP,r}((m_{\Delta_r}\otimes\Delta^{(1)}b)\otimes_1 (p_{\Delta_r}\otimes 1))=m_{\Delta_r}\otimes (\Delta^{(1)}b\cdot p_{\Delta_r}\cdot S(\Delta^{(2)}b))\otimes \Delta^{(3)}b\\
&\vartheta_{NQ,\ell}((\Delta^{(4)}b\otimes n_{\Delta_\ell})\otimes_1(1\otimes q_{\Delta_\ell}))=\Delta^{(4)}b\otimes n_{\Delta_\ell}\otimes q_{\Delta_\ell}
\end{aligned}
\end{equation}
Finally, the map $\eta_{MNPQ}$ acts as
\begin{equation}
m_{\Delta_r}\otimes b\otimes n_{\Delta_\ell}\otimes p_{\Delta_r}\otimes q_{\Delta_\ell}\mapsto
\left(m_{\Delta_r}\otimes  (\Delta^{(1)}b\cdot p_{\Delta_r}\cdot S(\Delta^{(2)}b))\right)\otimes \Delta^{(3)}b\otimes \left(n_{\Delta_\ell}\otimes q_{\Delta_\ell}\right)
\end{equation}
It is an isomorphism because the map $\vartheta_{MP,r}$ (see \eqref{wdw}) is an isomorphism by Lemma \ref{lemmawd}.

\end{proof}

\smallskip

The situation of the 2-fold monoidal category $\Tetra(B)$, where $B$ is a Hopf algebra, gives an illustration for the following result, due to Joyal and Street [JS]:

\begin{theorem}[Joyal-Street]\label{jst}
Suppose $\mathscr{C}$ be an $n$-fold monoidal category, for which all Eckmann-Hilton maps $\eta^{i,j}$, $1\le i<j\le n$, are isomorphisms.
Suppose $n=2$. Consider the map
\begin{equation}
\lambda_{MN}: M\otimes_1N\xrightarrow{\eta_{eMNe}}N\otimes_2M\xrightarrow{\eta_{NeeM}^{-1}}N\otimes_1M
\end{equation}
Then $(\mathscr{C},\otimes_1,\lambda)$ is a braided monoidal category.

Conversely, the 2-fold monoidal category $\mathscr{C}^\prime$ whose underlying category is that of $\mathscr{C}$, both monoidal products $\circ_1$ and $\circ_2$ are equal and equal to $\otimes_1$, and $\eta^\prime_{MNPQ}\colon (M\circ_2N)\circ_1(P\circ_2 Q)\to (M\circ_1P)\circ_2(N\circ_1Q)$ is defined as $\id_M\otimes_1\lambda_{NP}\otimes_1\id_Q$, is equivalent as a 2-fold monoidal category to $\mathscr{C}$.

When $n>2$, an $n$-fold monoidal category with all $\eta^{ij}_{MNPQ}$ isomorphisms, is a symmetric monoidal category.
\end{theorem}

Using our proof of Theorem \ref{goed}, we can compute the braiding $\lambda_{MN}$ explicitly in terms of the antipode $S$.

\bigskip
{\small
\noindent {\sc Universiteit Antwerpen, Campus Middelheim, Wiskunde en Informatica, Gebouw G\\
Middelheimlaan 1, 2020 Antwerpen, Belgi\"{e}}}

\bigskip

\noindent{{\it e-mail}: {\tt Boris.Shoikhet@uantwerpen.be}}

\end{document}